\documentclass[11pt, final,reqno,a4paper]{amsart}
\usepackage{amsmath,amsthm,amssymb,amsfonts,mathrsfs,hhline,cite,enumerate,mathtools}

\usepackage{caption,subcaption,graphicx}
\usepackage[subrefformat=parens, labelfont=up]{subcaption}
\usepackage{xspace}
\usepackage[dvipsnames]{xcolor}

\usepackage[colorlinks]{hyperref}

\usepackage{verbatim}
\usepackage{enumitem}
\usepackage{tikz}
\setlist[enumerate,1]{label=(\arabic*)}
\usepackage{multicol}
\usepackage{geometry}
\numberwithin{equation}{section}
\theoremstyle{plain} 
\newtheorem{thm}{Theorem}[section]
\newtheorem{lem}[thm]{Lemma}
\newtheorem{cor}[thm]{Corollary}
\newtheorem{prop}[thm]{Proposition}
\theoremstyle{definition}
\newtheorem{defn}[thm]{Definition}
\newtheorem{ques}[thm]{Question}
\newtheorem{rem}[thm]{Remark}
\newtheorem{ex}[thm]{Example}

\newcommand\nc\newcommand
\nc\rnc\renewcommand

\nc\R{\mathcal R} 
\nc\T{\mathcal T}
\nc\V{\mathcal V}
\nc\U{\mathcal U}
\nc\G{\mathcal G}
\rnc\P{\mathcal P}
\nc\B{\mathcal B}
\rnc\S{\mathcal S}
\nc\TL{\mathcal T\!\mathcal L}
\nc\K{\mathcal K}
\nc\C{\mathcal C}
\nc\CC{\mathscr C}
\nc\D{\mathcal D}
\nc\I{\mathcal I}
\nc\M{\mathcal M}
\nc\bd{{\bf d}}
\nc\br{{\bf r}}
\nc\bit{\begin{itemize}}
\nc\eit{\end{itemize}}
\nc\ben{\begin{enumerate}[label=\textup{(\roman*)},leftmargin=10mm]}
\nc\bena{\begin{enumerate}[label=\textup{(\alph*)},leftmargin=10mm]}
\nc\een{\end{enumerate}}
\nc\set[2]{\{#1:#2\}}
\nc\bigset[2]{\big\{#1:#2\big\}}
\nc\io\iota
\nc\ve\varepsilon
\nc\NN{\mathbb N}
\nc\ZZ{\mathbb Z}
\nc\PP{\mathbb P}
\nc\BB{\mathbb B}
\nc\CCC{\mathbb C}
\nc\op\oplus
\nc\ot\otimes
\rnc\implies{\ \Rightarrow\ }
\rnc\iff{\ \Leftrightarrow\ }
\nc\IFF{\qquad \Leftrightarrow\qquad }
\nc\even{{\operatorname{even}}}
\nc\odd{{\operatorname{odd}}}
\nc\al\alpha
\nc\la\langle
\nc\ra\rangle
\nc\sm\setminus
\nc\lam\lambda
\nc\Om\Omega
\nc\de\delta
\nc\pr\star

\nc\bz\varnothing

\nc\id{\operatorname{id}}
\nc\bm{[m]}
\nc\bn{[n]}
\nc\AND{\qquad\text{and}\qquad}
\nc\ANd{\quad\text{and}\quad}
\nc\COMMA{,\qquad}
\nc\COMMa{,\quad}
\nc\ol\overline
\nc\ool[1]{\ol{\ol{#1}}}
\nc\es\emptyset
\nc\sub\subseteq
\nc\mt\mapsto
\nc\dom{\operatorname{dom}}
\nc\codom{\operatorname{codom}}
\nc\coker{\operatorname{coker}}
\nc\rank{\operatorname{rank}}
\nc\X{\mathcal X}
\nc\Y{\mathcal Y}
\nc\Z{\mathcal Z}

\nc\darcx[3]{\draw(#1,0)arc(180:90:#3) (#1+#3,#3)--(#2-#3,#3) (#2-#3,#3) arc(90:0:#3);}
\nc\uarcx[3]{\draw(#1,2)arc(180:270:#3) (#1+#3,2-#3)--(#2-#3,2-#3) (#2-#3,2-#3) arc(270:360:#3);}
\nc\darc[2]{\darcx{#1}{#2}{.4}}
\nc\uarc[2]{\uarcx{#1}{#2}{.4}}
\nc\udotted[2]{\draw[dotted] (#1+.5,2)--(#2-.5,2);}
\nc\ddotted[2]{\draw[dotted] (#1+.5,0)--(#2-.5,0);}
\nc\vertlab[2]{\node () at (#1,2.7) {\tiny $#2$};}
\nc\lvertlab[2]{\node () at (#1,-.7) {\tiny $#2$};}

\nc\fourrow[4]{#1&#2&#3&#4}

\nc\lv[1]{\fill (#1,0)circle(.15);}
\nc\lvs[1]{{\foreach \x in {#1}{\lv{\x}}}}
\nc\uv[1]{\fill (#1,2)circle(.15);}
\nc\uvs[1]{{\foreach \x in {#1}{\uv{\x}}}}
\nc\stline[2]{\draw(#1,2)--(#2,0);}
\nc\stlines[1]{{\foreach \x/\y in {#1}{\stline{\x}{\y} }}}

\nc\lvc[2]{\fill[#2] (#1,0)circle(.15);}
\nc\uvc[2]{\fill[#2] (#1,2)circle(.15);}
\nc\darcxc[4]{\draw[#4](#1,0)arc(180:90:#3) (#1+#3,#3)--(#2-#3,#3) (#2-#3,#3) arc(90:0:#3);}
\nc\uarcxc[4]{\draw[#4](#1,2)arc(180:270:#3) (#1+#3,2-#3)--(#2-#3,2-#3) (#2-#3,2-#3) arc(270:360:#3);}
\nc\darcc[3]{\darcxc{#1}{#2}{.4}{#3}}
\nc\uarcc[3]{\uarcxc{#1}{#2}{.4}{#3}}
\nc\stlinec[3]{\draw[#3](#1,2)--(#2,0);}

\nc\uvert[1]{\fill (#1,2)circle(.2);}
\rnc\lvert[1]{\fill (#1,0)circle(.2);}
\nc\custpartn[3]{{\lower1.4 ex\hbox{
\begin{tikzpicture}[scale=.3]
\foreach \x in {#1}
{ \uvert{\x}  }
\foreach \x in {#2}
{ \lvert{\x}  }
#3 \end{tikzpicture}
}}}
\nc\smallcustpartn[3]{{\lower1.2 ex\hbox{
\begin{tikzpicture}[scale=.2]
\foreach \x in {#1}
{ \uvert{\x}  }
\foreach \x in {#2}
{ \lvert{\x}  }
#3 \end{tikzpicture}
}}}
\nc\bpt[6]{\custpartn{1,2,3}{1,2,3}{\stline{#1}{#2}\uarc{#3}{#4}\darc{#5}{#6}}}
\nc\bpp[6]{\custpartn{1,2,3}{1,2,3}{\stline{#1}{#2}\stline{#3}{#4}\stline{#5}{#6}}}

\usepackage{calc}
\newcounter{ncols}
\newcounter{incols}
\newenvironment{partn}[1]{
  \setcounter{ncols}{#1} \setcounter{incols}{\thencols - 1}\setlength{\arraycolsep}{1pt}
  \Bigl( \hspace{-1.5truemm}\scriptsize 
    \begin{array}{@{\hskip 3pt} c *{\theincols}{|c} @{\hskip 3pt}  }
}{
     \end{array}
     \normalsize \hspace{-1.5truemm}\Bigr)\setlength{\arraycolsep}{6pt}
}

\begin{document}

~\vspace{-0.5cm} 

\title[Faithful representations of diagram categories]{Faithful linear and relational representations of diagram categories and monoids}
\date{\today}

\author[J. East]{James East}
\address{Western Sydney University}
\email{J.East@WesternSydney.edu.au}
\author[M. Johnson]{Marianne Johnson}
\address{University of Manchester}
\email{Marianne.Johnson@Manchester.ac.uk}
\author[M. Kambites]{Mark Kambites}
\address{University of Manchester}
\email{Mark.Kambites@Manchester.ac.uk}

\begin{abstract}
We study representations of diagram categories by binary relations and matrices over rings and semirings.  Our main result is a faithful involutive tensor representation of the partition category $\P$ (and consequently of each partition monoid~$\P_n$) by zero-one matrices over an arbitrary (additively) idempotent semiring. The dimensions of the matrices involved are powers of~$2$, and we show that these are minimal with respect to faithful involutive tensor representations by matrices over any semiring.  Intriguingly, these matrices encode the number of floating components formed when composing partitions, and can therefore be used to construct faithful representations of ($d$-)twisted partition categories $\P^\Phi$ and $\P^{\Phi,d}$ (and the respective twisted partition monoids $\P_n^\Phi$ and $\P_n^{\Phi, d}$) over rings of appropriate characteristic.  We also give lower-dimensional involutive representations of the Brauer and Temperley--Lieb categories~$\B$ and $\TL$.  In the case of $\TL$, the dimensions are given by Fibonacci numbers.

\emph{Keywords}: Diagram categories, diagram monoids, relation categories, relation monoids, linear categories, linear monoids, representations.

MSC: 
18M05, 
20M20, 
20M30, 
20M50, 
16S50, 
05E10, 
05E16. 
\end{abstract}
\maketitle

\vspace{-5mm}

\tableofcontents

\section{Introduction}\label{sect:intro}

Diagram algebras arise naturally in many areas of mathematics and science, and play important roles in statistical mechanics, biology, computer science, representation theory, logic, combinatorics, knot theory and more.  These algebras have bases consisting of various kinds of set partitions, which are represented and multiplied diagrammatically. Key examples include Brauer algebras \cite{B1937}, Temperley--Lieb algebras~\cite{TL1971} and partition algebras \cite{Martin1994,Jones1994}; for more on the background and applications, see the survey \cite{Martin2008}.  Early on, Jones and Kauffman recognised the importance of underlying diagram \emph{monoids} \cite{Jones1983,Jones1987,Jones1994,J1994, Kauffman1987,Kauffman1990, Kauffman1997}, which underpin the structure of the algebras, and the link was made precise by Wilcox in his work on twisted semigroup algebras \cite{W2007}; see also \cite{HR2005}.  Diagram monoids and algebras occur as endomorphism monoids/algebras of associated diagram \emph{categories}, studied for example in \cite{A2008,DDE2021,KST2024,Martin2008,Martin1994,ER2023,LZ2015}.

Diagram monoids have also become a major theme in modern semigroup theory, due to their intrinsically intriguing algebraic and combinatorial properties, and striking parallels with transformation semigroups.  For example, there are studies on Green's relations~\cite{W2007, FL2011}, presentations by generators and relations \cite{KM2006,E2018,E2021,MM2007,E2011}, characterisation and enumeration of idempotents \cite{DEEFHHKL2015, DEEFHHLM2019}, idempotent-generation \cite{DE2017,EF2012,EGPR2025,EG2017}, congruences \cite{EMRT2018, ER2022a, ER2022b, ER2022c}, Krohn--Rhodes complexity \cite{A2012}, semigroup identities \cite{A2014, ACHLV2015, KV2020, KV2024}, and connections to category theory \cite{EG2021,EPA2024}. 

In a different direction, the recent article \cite{CEM2024} studied transformation representations of diagram monoids, in each case giving an explicit formula for the \emph{transformation degree}, i.e.~the minimum number of points on which the monoid acts faithfully by functions.  For example, the partition monoid $\P_n$ has degree $1 + \frac{B(n+2)-B(n-1)+B(n)}2$, where $B(n)$ is the $n$th Bell number.  
Any transformation representation leads to a \emph{linear} representation of the same degree, over any unital (semi)ring.  In general, however, there can be faithful linear representations whose dimension is substantially lower than the transformation degree, and this is one of the main motivations for the present work.  As a sample result, we show that there is a faithful $2^n$-dimensional representation of $\P_n$ by matrices over any (additively) idempotent semiring.  This representation also models the well-known (product-reversing) involution of~$\P_n$ as transposition of matrices, thereby retaining information that is lost by transformation representations (as full transformation monoids have no involutions).  
Since matrices over the \emph{Boolean} semiring can be interpreted as binary relations, we also obtain a faithful representation of $\P_n$ by relations of degree~$2^n$.
In fact, all of these monoid representations can be packaged into a single faithful representation of the partition \emph{category} $\P$ in the full matrix category $\M$.  This category representation contains much more information, as it is defined also on hom-sets $\P_{m,n}$ with $m\not=n$.  It is additionally compatible with natural \emph{tensor} structures on $\P$ and~$\M$, and has order-theoretic connections as well.
We remark that the relation-theoretic interpretation of our representation $\P\to\M$ seems to be closely related to a special case of a functor considered briefly in \cite{DP2003}.

Let us now give a fuller summary of the paper.  
We begin in Section \ref{sect:prelim} by recalling the preliminary material we need, including the definitions of the partition category $\P$, the Brauer category $\B$ and the Temperley--Lieb category~$\TL$, as well as their twisted counterparts.  

Section \ref{sect:Pn} contains our main construction, which is a mapping
\[
\varphi = \varphi_K:\P\to\M(K):a\mt\ol a,
\]
where $K$ is any (fixed) unital semiring, and a partition $a\in \P_{m,n}$ is mapped to a ${2^m\times2^n}$ zero-one matrix $\ol a\in\M_{2^m,2^n}(K)$ determined by the `blocks' of $a$; see Definition~\ref{def:therep}.  This map~$\varphi$ is always injective, involutive and preserves the tensor operations; it is a morphism if and only if $K$ is an \emph{(additively) idempotent} semiring; see Proposition~\ref{prop:vp}, Theorem~\ref{thm:partition} and Remark \ref{rem:conv}. The proof of Theorem \ref{thm:partition} involves an analysis of what we call \emph{intertwining sets}; see Definition \ref{defn:is} and Lemma \ref{lem:intertwine}. It follows that for each fixed $n$ the partition monoid $\P_n$ embeds into the matrix monoid $\M_{2^n}(K)$ over any unital idempotent semiring $K$. We conclude Section \ref{sect:Pn} by showing that the dimensions of the matrices in the image of $\varphi$ are minimal with respect to faithful involutive tensor representations by matrices over \emph{any} semiring; see Theorem \ref{thm:min}.

In Section \ref{sect:twisted} we adapt our mapping $\varphi$ to construct a minimal faithful involutive tensor representation of the \emph{twisted} partition category $\P^\Phi$ over any semiring of characteristic~$0$, leading also to faithful representations of the twisted partition monoids $\P_n^\Phi$; see Theorem \ref{thm:twistpartition}.  This is possible because of the underlying combinatorial properties of the intertwining sets mentioned above, and to their connection with `float counting' parameters.
Theorem~\ref{thm:twistdpartition} provides a corresponding result for the \emph{$d$-twisted} categories~$\P^{\Phi,d}$ (and $d$-twisted partition monoids $\P_n^{\Phi,d}$), involving a similar construction over rings of characteristic~$2^{d+1}$.  When $K$ is a field of characteristic~$0$, we show in Theorem~\ref{thm:decomp} that the associated $\P^\Phi$-module $\V$ (consisting of disjoint copies of $2^n$-dimensional vector spaces over $K$) decomposes as a direct sum of two irreducible submodules, $\V=\V^+\op\V^-$, although neither $\V^+$ nor $\V^-$ is faithful; see Remark~\ref{rem:decomp}.

In Sections \ref{sect:Bn} and \ref{sect:TLn}, we drop the requirement that our representations respect the tensor structure and show how the above representation of $\P$ induces smaller degree faithful involutive representations of the subcategories $\B$ and $\TL$ (and hence of the submonoids $\B_n$ and $\TL_n$); see Theorems~\ref{thm:Brauer_odd},~\ref{thm:TL_even} and~\ref{thm:TLfib}.  In particular, a Temperley--Lieb diagram $a\in\TL_{m,n}$ is faithfully represented as an $f_m\times f_n$ matrix, where $f_n$ is the $n$th Fibonacci number.

Section \ref{sect:algebras} explores connections to representations of diagram \emph{algebras} and associated \emph{linear} diagram categories.  In particular, our representation of the twisted category $\P^\Phi$ leads to matrix representations of the linear partition categories $\P(K,2)$, and hence of the partition algebras $\P_n(K,2)$, where $K$ is a ring of characteristic $0$.  These are not faithful (see Example~\ref{ex:P2}), and neither are the restrictions to the Brauer algebras (see Example~\ref{ex:B3}).  However, the restrictions to the Temperley--Lieb algebras coincide with well-known representations \cite{B1937}, which \emph{are} known to be faithful when $K$ is a field \cite{J1994,DKP2006}.  We give a simple argument to deduce that our representation of the full linear Temperley--Lieb category $\TL(K,2)$ is itself faithful; see Theorem \ref{thm:TLC}.

Finally, Section \ref{sect:conclusion} contains a number of open questions for future research. A natural problem is to determine the \emph{minimum} dimensions of faithful linear representations of the (twisted) diagram categories and monoids.  In particular, it is intriguing to ask whether or when the faithful, involutive representations we give here are minimal.  For example, our representation of the partition monoid $\P_2$ over the Boolean semiring has degree $2^2 = 4$; it can be shown computationally with GAP \cite{GAP,Semigroups} that this monoid cannot be faithfully represented in degree $3$, but we have no theoretical method that would allow us to establish corresponding lower bounds in higher rank cases where computational approaches are infeasible.

\section{Preliminaries}\label{sect:prelim}

We now gather the definitions and results we need on general categories and semirings, as well as specific categories of matrices and diagrams/partitions.  For further details and background, see for example \cite{S2011,M1998,E2024,Howie1995,HV2019,ER2022c}.  We denote the sets of integers and natural numbers by $\ZZ = \{0,\pm1,\pm2,\ldots\}$ and $\NN = \{0,1,2,\ldots\}$, respectively.
For a natural number $n\in\NN$, we write $[n] = \{1,\ldots,n\}$, interpreting this to be empty when $n=0$. Throughout, maps will typically be written on the right and hence composed reading left to right.

\subsection{Categories}\label{subsect:cat}

All categories we consider are \emph{small}, meaning that their morphisms form a set. Formally, a category is a tuple $(\C,O,\bd,\br,\circ)$, where:
\bit
\item $\C$ is the set of morphisms,
\item $O$ is the set of objects,
\item $\bd:\C\to O$ and $\br:\C\to O$ are the domain and range maps, and
\item $\circ$ is the composition operation of morphisms, where $a\circ b$ is defined (for $a,b\in\C$) if and only if $\br(a)=\bd(b)$, in which case $\bd(a\circ b) = \bd(a)$ and $\br(a\circ b) = \br(b)$,
\eit
with composition satisfying the associative law where defined, and where for each object $s \in O$ there exists an \emph{identity morphism} at $s$ denoted $\io_s$ with $\bd(\io_s) = \br(\io_s) = s$ and such that $\io_s \circ a = a$ and $b \circ \io_s = b$ wherever defined.  We typically identify the category $(\C,O,\bd,\br,\circ)$ with the underlying (morphism) set $\C$, and use juxtaposition to denote composition: $ab = a\circ b$.  For objects $s,t\in O$, we write
\[
\C_{s,t} = \set{a\in\C}{\bd(a)=s ,\ \br(a) = t}
\]
for the set of all morphisms $s\to t$, and refer to such sets as hom-sets. Note that if $s$ and~$t$ are distinct there is no requirement that $\C_{s,t}$ be non-empty; for $s=t$ we have $\io_s \in \C_{s,s}$.  We write $\C_s = \C_{s,s}$ for the endomorphism monoid of the object $s\in O$.

An \emph{involution} on a category $\C$ is a map $\C\to\C:a\mt a^*$ satisfying the following for all $a,b \in \C$:
\bit
\item $\bd(a^*) = \br(a)$ and $\br(a^*) = \bd(a)$,
\item $a^{**} = a$,
\item $(a b)^* = b^* a^*$ when $\br(a) = \bd(b)$.
\eit
It is an easy exercise to deduce that the above axioms imply also
that $\io_s^* = \io_s$ for each object $s$.
An \emph{involutive category} is a category with a specified involution.  These are also called \emph{dagger categories} or \emph{$*$-categories} in the literature.  A \emph{regular $*$-category} is an involutive category $\C$ for which $a=aa^*a$ for all $a\in\C$; it follows then that also $a^*=a^*aa^*$, so that $a^*$ is a \emph{(generalised) inverse} of $a$.

The categories of interest in this paper are \emph{strict tensor categories}, meaning that the object and morphism sets have additional (totally defined) binary operations, satisfying natural conditions.  In all cases we consider, the object set $O$ of $\C$ will be either~$(\NN,+)$ or~$(\NN,\cdot)$, the additive or multiplicative monoids over the natural numbers~${\NN=\{0,1,2,\ldots\}}$, in which case the corresponding \emph{tensor} operation on $\C$ will be denoted~$\op$ or~$\ot$, respectively.
When $O=(\NN,+)$ the axioms are as follows:
\bit
\item $\bd(a\op b)=\bd(a)+\bd(b)$ and $\br(a\op b)=\br(a)+\br(b)$, 
\item $a\op(b\op c)=(a\op b)\op c$, 
\item $a\op \io_0=a=\io_0\op a$, 
\item $\io_m\op\io_n=\io_{m+n}$, 
\item $(a\circ b)\op(c\circ d) = (a\op c)\circ(b\op d)$ if $\br(a)=\bd(b)$ and $\br(c)=\bd(d)$. 
\eit
The axioms are analogous in the case $O=(\NN,\cdot)$; we replace $\op$ and $+$ by $\ot$ and $\cdot$ in each law, and $\io_0$ by $\io_1$ in the third.  The term `strict' refers to the fact that we have equality in the above laws, rather than isomorphism, as for more general tensor categories.  Since we only ever refer to strict tensor categories, we drop `strict' henceforth.

An \emph{involutive tensor category} is simultaneously an involutive and tensor category, in which $(a\op b)^* = a^*\op b^*$ or $(a\ot b)^* = a^*\ot b^*$, as appropriate.

A category \emph{representation} is a functor $\phi:\C\to\D$ between categories.  We say $\phi$ is:
\bit
\item \emph{faithful} if it is injective on objects and morphisms,
\item \emph{involutive} if $\C$ and $\D$ are both involutive categories, and $\phi$ preserves the involutions,
\item a \emph{tensor representation}, if $\C$ and $\D$ are both tensor categories, and $\phi$ preserves the tensor operations. 
\eit

\subsection{Semirings}\label{subsect:sr}

A \emph{(unital) semiring} is a tuple $(K,+,\cdot,0,1)$, such that:
\bit
\item $(K,+,0)$ is a commutative monoid, with $+$ referred to as addition,
\item $(K,\cdot,1)$ is a monoid, with $\cdot$ referred to as multiplication,
\item multiplication distributes over addition, and 
\item $0$ is a multiplicative zero element.
\eit
We typically identify a semiring with its underlying set $K$, and we always assume that~$K$ is non-trivial, in the sense that $0\not=1$.
We say that a semiring $K$:
\begin{itemize}
\item is \emph{commutative} if the multiplication of $K$ is commutative,
\item is \emph{(additively) idempotent} if $1+1=1$ (or equivalently if $\alpha+\alpha=\alpha$ for all $\alpha\in K$),
\item has \emph{characteristic 0} if $1$ generates an infinite subsemigroup under addition.
\end{itemize}
The prototypical example of an idempotent semiring is the \emph{Boolean semiring}, $\BB = \{0,1\}$, in which $1+1=1$ (and all other additions and multiplications follow from the axioms). Another important and widely studied example is the \textit{tropical} (or \textit{max-plus}) \textit{semiring} (see for example \cite{MS2015}). The natural numbers under ordinary addition and multiplication is a semiring of characteristic $0$.

In an arbitrary semiring of characteristic $0$, we shall identify a natural number $n$ with the element $\underbrace{1 + \cdots + 1}_n$ in $K$, interpreting this as the zero of $K$ when $n=0$. If $K$ is a semiring with the property that  $\underbrace{1+ \cdots + 1}_d = 0$ for some $d\geq 2$, then clearly $1$ (and hence, by distributivity, every element of $K$) has an additive inverse; thus in this case $K$ is a ring. If $d$ is the least such positive integer, then $K$ is a ring of characteristic $d$. In a ring of characteristic $d$, each integer $0\leq n<d$ will be identified with $\underbrace{1 + \cdots + 1}_n$~in~$K$.

\subsection{Categories of matrices}\label{subsect:M}

Throughout this section we fix a (non-trivial unital) semiring $K$.  For sets $I$ and $J$, we write $\M_{I,J}$ for the set of all $I\times J$ matrices over $K$; later we write $\M_{I,J}(K)$ when we wish to emphasise the semiring $K$, with an analogous convention for other sets of matrices.  For $a\in\M_{I,J}$, and for $i\in I$ and $j\in J$, we denote the $(i,j)$th entry of $a$ by $a_{i,j}$, and write $a = (a_{i,j})_{i\in I,j\in J}$, or simply $a=(a_{i,j})$ if the indexing sets are understood from context.  When $I$ or $J$ is the empty set, we assume that $\M_{I,J}$ consists of a unique $I\times J$ empty matrix.

If $I = [m] = \{1,\ldots,m\}$ and $J = [n] = \{1,\ldots,n\}$ for natural numbers $m,n\in\NN$, we simply write $\M_{m,n} = \M_{[m],[n]}$.  The set
\[
\M = \bigcup_{m,n\in\NN} \M_{m,n}
\]
is an involutive tensor category with object set $(\NN,\cdot)$, where composition and involution are given by ordinary matrix multiplication and transpose, the latter being denoted $a\mt a^T$.  The tensor operation is the \emph{Kronecker product} $\ot$, whose definition we recall below.  For $n\in\NN$, the endomorphism monoid $\M_n = \M_{n,n}$ is the full linear monoid of degree $n$ over $K$; its identity is the usual $n\times n$ identity matrix, which we denote by $I_n$.  Note that $\M_{n,0}$ and $\M_{0,n}$ respectively consist solely of the unique $n\times0$ or $0\times n$ empty matrix, for each $n\in\NN$.

In the special case that $K = \BB$ is the Boolean semiring, the category $\M(\BB)$ is isomorphic to the \emph{relation category} $\R$.  The latter has object set $\NN$, and the morphism set $\R_{m,n}$ consists of all relations $[m]\to[n]$, i.e.~all subsets of $[m]\times[n]$.  The rule for composing relations $\al\in\R_{m,n}$ and $\beta\in\R_{n,t}$ is
\[
\al\beta = \bigset{(i,j)\in[m]\times[t]}{(i,x)\in\al,\ (x,j)\in\beta\ (\exists x\in[n])}.
\]
The involution is given by $\al^* = \set{(y,x)}{(x,y)\in\al}$.
The above-mentioned isomorphism $\M(\BB) \to \R$ sends a matrix $a\in\M_{m,n}(\BB)$ to the relation ${\bigset{(i,j)\in[m]\times[n]}{a_{i,j}=1}}$.

\begin{defn}\label{defn:Kronecker}
For matrices $A=(a_{i,j})\in\M_{I,J}$ and $B=(b_{u,v})\in\M_{U,V}$, the \emph{Kronecker product} $A\ot B\in\M_{I\times U,J\times V}$ is the matrix with entry $a_{i,j}b_{u,v}$ in row-$(i,u)$, column-$(j,v)$.
\end{defn}

For example, if $I=[m]$, $J=[n]$, $U=[s]$ and $V=[t]$ for natural numbers ${m,n,s,t\in\NN}$, then the bijections
\begin{align*}
[m]\times[s]\to[ms]&:(i,u)\mt (i-1)s+u \\
\text{and}\qquad [n]\times[t]\to[nt]&:(j,v)\mt (j-1)t+v
\end{align*}
allow us to regard $A\ot B$ as in Definition \ref{defn:Kronecker} as the $ms\times nt$ matrix 
\[
A\ot B 
= \begin{pmatrix}
a_{1,1}B & \cdots & a_{1,n}B\\
\vdots&\ddots&\vdots\\
a_{m,1}B & \cdots & a_{m,n}B
\end{pmatrix} \in \M_{ms,nt}.
\]
(If any of $m$, $n$, $s$ or $t$ is $0$, then $A\ot B$ is an empty matrix of appropriate dimensions.)  It is this formulation of the Kronecker product $\ot$ that gives $\M$ its tensor category structure.

As another example, consider $A=(a_{X,Y})\in\M_{2^{[m]},2^{[n]}}$ and ${B=(b_{U,V})\in\M_{2^{[s]},2^{[t]}}}$ for natural numbers $m,n,s,t\in\NN$.  Here $2^{[m]} = \set{X}{X\sub[m]}$ denotes the power set of~$[m]$, and so on.  The bijections between power sets
\begin{align*}
2^{[m]}\times2^{[s]} \to 2^{[m+s]} &: (X,U) \mt X\cup(U+m) \\
\text{and}\qquad 2^{[n]}\times2^{[t]} \to 2^{[n+t]} &: (Y,V) \mt Y\cup(V+n) 
\end{align*}
allow us to regard $A\ot B$ as in Definition \ref{defn:Kronecker} as a $2^{[m+s]}\times 2^{[n+t]}$ matrix.  (In the above we write $U+m=\{u+m:u\in U\}$ and $V+n=\{v+n:v\in V\}$.)  For $P\sub[m+s]$ and $Q\sub[n+t]$, there exist unique $X\sub[m]$, $Y\sub[n]$, $U\sub[s]$ and $V\sub[t]$ such that  $P=X\cup(U+m)$ and $Q=Y\cup(V+n)$, giving
\begin{equation}\label{eq:ot}
(A\ot B)_{P,Q} = a_{X,Y}b_{U,V}.
\end{equation}

\subsection{Matrix representations and effective dimensions}\label{subsect:rep}

A matrix representation of a category $\C$ is a functor
\[
\phi:\C\to\M(K)
\]
for some semiring $K$. The main results of this paper concern matrix representations of tensor categories that are faithful, involutive, and in some cases respect the tensor structure.  The `efficiency' of such a representation can be measured by looking at the dimensions of the matrices in the image.  Let the object set of $\C$ be $O$, and for each $s\in O$ write~$d_s = s\phi$. Recall that, formally, we have defined the objects of $\M(K)$ to be natural numbers, so that each $d_s \in \NN$.  Note that $\io_s\phi = I_{d_s}$ for all $s\in O$.  It follows that $\phi$ maps a hom-set $\C_{s,t}$ into~$\M_{d_s,d_t}$.  In particular, $\phi$ maps each endomorphism monoid $\C_s$ to the monoid $\M_{d_s}$ of $d_s\times d_s$ matrices over $K$.  Consequently, if $\phi$ is faithful, then each~$\C_s$ embeds in $\M_{d_s}$, in which case we have upper bounds for the \emph{effective dimensions}:
\begin{equation}\label{eq:dim}
\dim_K(M) := \min\bigset{d}{M\text{ embeds in }\M_d(K)} \leq d_s \qquad\text{for any submonoid $M$ of $\C_s$.}
\end{equation}
Our main results below give faithful matrix representations of certain diagram categories (to be defined shortly).  For each category $\C$ considered we also state the dimension of the image of each endomorphism monoid $\C_s$, which leads to upper bounds on effective dimensions of (submonoids) of the corresponding diagram monoids, as in \eqref{eq:dim}. See in particular Sections~\ref{subsect:min} and~\ref{sect:conclusion} below for further discussion.

\subsection{Diagram categories and monoids}\label{subsect:P}

For $A\sub\NN$ we write ${A' = \set{x'}{x\in A}}$ and $A'' = \set{x''}{x\in A}$ for two distinguished disjoint copies of $A$.

\begin{defn}
\label{defn:P}
For natural numbers $m,n\in\NN$ we write $\P_{m,n}$ for the set of all set partitions of $[m]\cup[n]'$.  The \emph{partition category} has object set $\NN$, and morphism set
\[
\P = \bigcup_{m,n\in\NN}\P_{m,n},
\]
under an operation that will be defined shortly.  

For a partition $a\in\P_{m,n}$ we write $\bd(a) = m$ and $\br(a) = n$.  We refer to the elements of~$a$ as the \emph{blocks} of $a$, or as \emph{$a$-blocks}.  We identify $a$ with any graph on vertex set $[m]\cup[n]'$ whose connected components are the $a$-blocks.  Such graphs are drawn in the plane $\mathbb R^2$ with vertex $x\in[m]$ in position $(x,1)$, and vertex $x'\in[n]'$ in position $(x,0)$; thus, the vertices from $[m]$ form a single upper row ordered consecutively from left to right, and likewise the vertices from $[n]'$ form a single lower row.  All such graphs are drawn with all edges contained within the region
\begin{equation}\label{eq:region}
\set{(x,y)\in\mathbb R^2}{0\leq y\leq 1}.
\end{equation}
(This is primarily important for planarity considerations below.)  See Figure \ref{fig:P6} for some examples.

The composition/product of partitions $a,b\in\P$ is defined if and only if $\br(a)=\bd(b)$.  So consider composable partitions $a\in\P_{m,n}$ and $b\in\P_{n,t}$, for some $m,n,t\in\NN$.  Informally,~$ab$ is calculated by first `glueing' the bottom of $a$ to the top of $b$, and then determining the connections among the top and bottom rows of vertices in this graph.
To formalise this, we first define three further graphs:
\begin{itemize}
\item $a^\downarrow$, with vertex set $[m]\cup[n]''$, obtained from $a$ by changing each lower vertex $x'$ to~$x''$,
\item $b^\uparrow$, with vertex set $[n]''\cup[t]'$, obtained from $b$ by changing each upper vertex $x$ to~$x''$, and
\item $\Pi(a,b)$, with vertex set $[m]\cup[n]''\cup[t]'$, whose edge set is the union of the edge sets of $a^\downarrow$ and $b^\uparrow$.
\end{itemize}
The graph $\Pi(a,b)$ is called the product graph of $a$ and $b$; it is drawn with vertices from~$[n]''$ in a middle row (on the line $y=\frac12$).  The product $ab\in\P_{m,t}$ is then defined so that ${x,y\in[m]\cup[t]'}$ belong to the same $ab$-block if and only if $x$ and $y$ are contained in the same connected component of $\Pi(a,b)$.  A sample product is considered in Example~\ref{ex:P6}.

The endomorphism monoids in $\P$ are the \emph{partition monoids} $\P_n = \P_{n,n}$ for $n\in\NN$.  The identity element of~$\P_n$ is the partition
\[
\io_n = \big\{\{1,1'\}, \{2,2'\}, \ldots, \{n,n'\}\big\} = \custpartn{1,2,4}{1,2,4}{\stline11\stline22\stline44\udotted24\ddotted24}.
\]
(Note that $\io_0$ is the empty partition, the unique element of $\P_0$.)  The group of units of~$\P_n$ (i.e.~the automorphism group at the object $n$) is the symmetric group $\S_n$.  Here a permutation $\pi\in\S_n$ is identified with the partition ${\bigset{\{i,(i\pi)'\}}{i\in[n]}\in\P_n}$.
\end{defn}

\begin{defn}
\label{defn:Phi}
Consider a pair of composable partitions ${a\in\P_{m,n}}$ and $b \in \P_{n,t}$.  The product graph $\Pi(a, b)$ may contain connected components that lie entirely in $[n]''$; viewed as subsets of vertices these components do not depend upon the choice of graphical representation of $a$ and $b$. We refer to these as \textit{floating components}, and write $\Phi(a,b)$ to denote the number of them.
\end{defn}

\begin{ex}\label{ex:P6}
Computation of the product of 
\begin{align*}
a &= \big\{\{1,4\}, \{2,3,4',5'\}, \{1',2',6'\}, \{3'\}\big\} \in\P_{4,6} \\
\text{and}\qquad b &= \big\{\{1,2\}, \{3,4,1'\}, \{5,4',5'\}, \{6\}, \{2',3'\}\big\} \in \P_{6,5}
\intertext{is  illustrated in  Figure \ref{fig:P6} using graphical representations of $a$ and $b$. The product graph~$\Pi(a,b)$ is constructed by identifying each vertex $i'$ in $a$ with the vertex~$i$ in $b$, and relabelling this as $i''$.  The product}
ab &= \big\{\{1,4\},\{2,3,1',4',5'\},\{2',3'\}\big\} \in \P_{4,5}
\end{align*}
has blocks corresponding to the non-empty intersections of the connected components of $\Pi(a,b)$ with $[4] \cup [5]'$. Notice that one of the connected components of $\Pi(a,b)$, namely $\{1'',2'',6''\}$, lies within~$[6]''$; this is a floating component, and since it is the only such component we have~$\Phi(a,b)=1$.
\end{ex}

\begin{figure}[ht]
\begin{center}
\begin{tikzpicture}[scale=.5]

\begin{scope}[shift={(0,0)}]	
\uvs{1,...,4}
\lvs{1,...,6}
\uarcx14{.6}
\uarcx23{.3}
\darc12
\darcx26{.6}
\darcx45{.3}
\stline34
\draw(0.6,1)node[left]{$a=$};
\draw[->](7.5,-1)--(9.5,-1);
\end{scope}

\begin{scope}[shift={(0,-4)}]	
\uvs{1,...,6}
\lvs{1,...,5}
\uarc12
\uarc34
\darc45
\darc23
\stline31
\stline55
\draw(0.6,1)node[left]{$b=$};
\end{scope}

\begin{scope}[shift={(10,-1)}]	
\uvs{1,...,4}
\lvs{1,...,6}
\uarcx14{.6}
\uarcx23{.3}
\darc12
\darcx26{.6}
\darcx45{.3}
\stline34
\draw[->](7.5,0)--(9.5,0);
\end{scope}

\begin{scope}[shift={(10,-3)}]	
\uvs{1,...,6}
\lvs{1,...,5}
\uarc12
\uarc34
\darc45
\stline31
\stline55
\darc23
\end{scope}

\begin{scope}[shift={(20,-2)}]	
\uvs{1,...,4}
\lvs{1,...,5}
\uarcx14{.6}
\uarcx23{.3}
\darc14
\darc45
\stline21
\darcx23{.2}
\draw(5.4,1)node[right]{$=ab$};
\end{scope}

\end{tikzpicture}
\caption{Partitions $a\in\P_{4,6}$ and $b\in \P_{6,5}$ and their product $ab\in\P_{4,5}$, with the product graph $\Pi(a,b)$ in the middle.  This has one floating component, namely $\{1'',2'',6''\}$, so $\Phi(a,b)=1$ in this case.}
\label{fig:P6}
\end{center}
\end{figure}

\begin{defn}
\label{def:block}
The blocks of a partition $a\in\P_{m,n}$ can in general be of three types.  We say that $X \in a$ is:
\begin{itemize}
    \item an \textit{upper $a$-block} if $X \subseteq [m]$,
    \item a \textit{lower $a$-block} if $X \subseteq [n]'$,
    \item a \textit{transversal $a$-block} (or just a \textit{transversal}) otherwise, i.e.~if $X$ contains at least one element of $[m]$ and at least one element of $[n]'$.
\end{itemize}
We write $a = \begin{partn}{6} A_1&\cdots&A_r&C_1&\cdots&C_s\\ \hhline{~|~|~|-|-|-} B_1&\cdots&B_r&D_1&\cdots&D_t\end{partn}$ to indicate that $a$ has transversals $A_i\cup B_i'$ ($1\leq i\leq r$), upper blocks $C_i$ ($1\leq i\leq s$) and lower blocks $D_i'$ ($1\leq i\leq t$).  Note that any of $r$, $s$ or $t$ could be $0$; we have $r=s=t=0$ if and only if $m=n=0$, in which case $a=\es$ is the unique element of $\P_0$.
\end{defn}

\begin{defn}
The (co)domain, (co)kernel and rank of a partition $a \in \P_{m,n}$ are defined by:
\begin{itemize}
\item ${\rm dom}(a) = \{x\in[m]:x \text{ is contained in a transversal }a\mbox{-block}\}$,
\item ${\rm codom}(a)= \{x\in[n]:x' \text{ is contained in a transversal }a\mbox{-block}\}$,
\item ${\rm ker}(a) = \{(x,y) \in [m] \times [m]: x \mbox{ and } y \mbox{ are in the same } a\mbox{-block}\}$,
\item ${\rm coker}(a) = \{(x,y) \in [n] \times [n]: x' \mbox{ and } y' \mbox{ are in the same } a\mbox{-block}\}$,
\item ${\rm rank}(a)$  is the number of transversal $a$-blocks.
\end{itemize}
Note that $\dom(a)$ and $\codom(a)$ are respectively subsets of $[m]$ and $[n]$; $\ker(a)$ and $\coker(a)$ are respectively equivalences on~$[m]$ and~$[n]$; and $0\leq\rank(a)\leq \min(m,n)$ is an integer.  If $a = \begin{partn}{6} A_1&\cdots&A_r&C_1&\cdots&C_s\\ \hhline{~|~|~|-|-|-} B_1&\cdots&B_r&D_1&\cdots&D_t\end{partn}$, then
\begin{align*}
\dom(a) &= A_1\cup\cdots\cup A_r, & [m]/\ker(a) &= \{A_1,\ldots,A_r,C_1,\ldots,C_s\}, & \rank(a) &= r,\\
\codom(a) &= B_1\cup\cdots\cup B_r, & [n]/\coker(a) &= \{B_1,\ldots,B_r,D_1,\ldots,D_t\}.
\end{align*}
\end{defn}

\begin{ex}
The partition $a \in \P_{4,6}$ illustrated in Figure \ref{fig:P6} has one upper block, two lower blocks and one transversal. Thus ${\rm rank}(a)=1$. Moreover, ${\rm dom}(a) = \{2,3\}$, ${\rm codom}(a) = \{4,5\}$, ${\rm ker}(a)$ is the equivalence relation on $[4]$ with classes $\{1,4\}$ and $\{2,3\}$, and ${\rm coker}(a)$ is the equivalence relation on $[6]$ with classes $\{1,2,6\}$, $\{3\}$ and $\{4,5\}$.
\end{ex}

\begin{defn}
The partition category $\P$ is a regular $*$-category, with respect to the involution ${}^*$ given by applying the relabelling $i \leftrightarrow i'$.  This corresponds to a vertical reflection of a graphical representative.  For example, with $a\in \P_{4,6}$ as in Figure~\ref{fig:P6}, we have $a^* = \custpartn{1,...,6}{1,...,4}{\darcx14{.6}\darcx23{.3}\uarc12\uarcx26{.6}\uarcx45{.3}\stline43} \in \P_{6,4}$.
\end{defn}

\begin{defn}
The partition category $\P$ is a tensor category, under a horizontal stacking operation denoted by $\op$.  Here for $a\in\P_{m,n}$ and $b\in\P_{s,t}$ we define $a\op b\in\P_{m+s,n+t}$ to be the result of placing $a$ and $b$ side-by-side, with $a$ on the left.  For example,
\[
\custpartn{1,2,3,4}{1,2,3}{\stline11\stline23\uarc12\uarc34} \op \custpartn{1,2,3,4}{1,2,3,4,5,6}{\stline44\uarc12\uarc23\uarc34\darc12\darc45\darc56} = \custpartn{1,...,8}{1,...,9}{\stline11\stline23\uarc12\uarc34 \stline87\uarc56\uarc67\uarc78\darc45\darc78\darc89}
\]
Formally, every $a$-block is an $(a\op b)$-block, while every $b$-block $X\cup Y'$ (with $X$ or $Y$ possibly empty) gives rise to the $(a\op b)$-block $(X+m)\cup(Y+n)'$.
\end{defn}

\begin{defn}
The \textit{Brauer category}~$\B$ is the subcategory of $\P$ consisting of those partitions in which each block has size~$2$. The \textit{Temperley--Lieb category} $\TL$ is the subcategory of~$\B$ consisting of all planar Brauer partitions, i.e.~those that can be represented by a graph with no crossings, remembering that all edges must be drawn within the region specified in \eqref{eq:region}.  For example, we have
\[
\custpartn{1,...,10}{1,...,8}{\stline13 \stline88 \uarc34 \uarc56\uarc9{10} \uarcx27{.7} \darc12 \darc56 \darcx47{.7}} \in \TL, \qquad\text{while}\qquad
\custpartn{1,...,10}{1,...,8}{\stline23 \stline88 \uarc34 \uarc56\uarc9{10} \uarcx17{.7} \darc12 \darc56 \darcx47{.7}} \in \B \setminus \TL.
\]
Since $\B$ and $\TL$ are closed under both ${}^*$ and $\op$, both are involutive tensor categories.
Endomorphism monoids in $\B$ and $\TL$ are Brauer monoids $\B_n$ and Temperley--Lieb monoids~$\TL_n$, respectively.  Temperley--Lieb monoids are also known as Jones monoids in the literature.

Note that for $m,n\in\NN$ we have $\B_{m,n}\not=\emptyset \iff \TL_{m,n}\not=\emptyset \iff m\equiv n \bmod2$.  Thus, $\B$ and $\TL$ both decompose as the disjoint unions of two subcategories:
\[
\B = \B_\even\sqcup\B_\odd \AND \TL = \TL_\even\sqcup\TL_\odd,
\]
where
\[
\B_\even = \bigcup_{m,n\in2\NN}\B_{m,n} \AND \B_\odd = \bigcup_{m,n\in2\NN+1}\B_{m,n},
\]
with analogous meanings for $\TL_\even$ and $\TL_\odd$.  Note that $\B_\even$ and $\TL_\even$ are closed under $\op$, but $\B_\odd$ and $\TL_\odd$ are not; all four of these subcategories are closed under ${}^*$.
\end{defn}

\subsection{Twisted diagram categories and monoids}\label{subsect:t}

Twisted diagram monoids incorporate the product of an underlying diagram monoid (such as $\P_n$, $\B_n$ or $\TL_n$) as well as the numbers $\Phi(a,b)$ from Definition \ref{defn:Phi}.  The definition extends without serious modification to twisted diagram \emph{categories}:

\begin{defn}
\label{defn:PPhi}
The \textit{twisted partition category} is the involutive tensor category with object set $\NN$, morphism set
\[
\P^\Phi = \NN\times\P = \{(i,a): i \in \mathbb{N},\ a \in \P\},
\]
and operations given by:  
\bit
\item $\bd(i,a) = \bd(a)$ and $\br(i,a) = \br(a)$,
\item $(i,a)(j,b) = (i+j+\Phi(a,b), ab)$ when $\br(a)=\bd(b)$,
\item $(i,a)^* = (i,a^*)$,
\item $(i,a)\op(j,b) = (i+j,a\op b)$.
\eit
Associativity of the composition follows from the `twisting identity'
\[
\Phi(a,b)+ \Phi(ab,c) = \Phi(a,bc)+ \Phi(b,c) \qquad\text{for composable $a,b,c\in\P$,}
\]
as observed in \cite{W2007,FL2011}.  This identity was proved in the monoid case in \cite[Lemma 4.1]{FL2011}, but the proof works without significant modification for the category $\P$. 
Hom-sets in $\P^\Phi$ have the form $\P_{m,n}^\Phi = \NN \times \P_{m,n}$,
and endomorphism monoids are the \emph{twisted partition monoids} $\P_n^\Phi = \P_{n,n}^\Phi$.  The identity of $\P_n^\Phi$ is $(0,\io_n)$.
\end{defn}

\begin{defn}
\label{defn:PPhid}
For each natural number $d\in\NN$, the \textit{$d$-twisted partition category} $\P^{\Phi,d}$ is the involutive tensor category with object set $\NN$, and (finite) morphism sets
\[
\P_{m,n}^{\Phi,d} =  \big(\{0,1,\ldots,d\}\times\P_{m,n}\big) \cup \{\bz_{m,n}\}.
\]
The operations are induced from those of $\P^\Phi$, with the morphisms $\bz_{m,n}$ acting as zero elements that also occur when compositions or tensor sums of non-zero morphisms yield integer-coordinates greater than $d$.  So we have:
\begin{align*}
(i,a)(j,b) &= \begin{cases}
(i+j+\Phi(a,b), ab) &\text{if $\br(a)=\bd(b)$ and $i+j+\Phi(a,b) \leq d$,}\\
\bz_{\bd(a), \br(b)} &\text{if $\br(a)=\bd(b)$ and $i+j+\Phi(a,b) > d$,}
\end{cases}\\[2mm]
(i,a)\op(j,b) &= \begin{cases}
(i+j,a\op b) &\text{if $i+j\leq d$,}\\
\bz_{\bd(a)+\bd(b),\br(a)+\br(b)} &\text{if $i+j> d$,}
\end{cases}
\end{align*}
and also
\[
\bz_{m,n}\circ\P_{n,t}^{\Phi,d} = \bz_{m,t} = \P_{m,n}^{\Phi,d}\circ\bz_{n,t} \AND \bz_{m,n}\op\P_{s,t}^{\Phi,d} = \bz_{m+s,n+t} = \P_{m,n}^{\Phi,d}\op\bz_{s,t}.
\]
The involution is given by
\[
(i,a)^* = (i,a^*) \AND \bz_{m,n}^*=\bz_{n,m}.
\]
One can think of $\P^{\Phi,d}$ as a \emph{Rees quotient} of $\P^\Phi$ by the ideal $\{d+1,d+2,\ldots\} \times \P$.  (For more details, and for the meaning of these terms, see \cite{ER2023,ER2022c}.)

Endomorphism monoids $\P_n^{\Phi,d}$ are the $d$-twisted partition monoids; they were denoted~$\P_{n,d}^\Phi$ in \cite{ER2022b,ER2022c}, but here we have placed the `$d$' in a superscript to avoid confusion with hom-sets $\P_{m,n}^\Phi$ in the category $\P^\Phi$.
\end{defn}

\begin{defn}
\label{defn:BTLPhi}
One also of course has ($d$-)twisted Brauer and Temperley--Lieb categories, which we denote by $\B^\Phi$, $\B^{\Phi,d}$, $\TL^\Phi$ and $\TL^{\Phi,d}$ with the obvious meanings.  Note that in the $d$-twisted categories $\B^{\Phi,d}$ and $\TL^{\Phi,d}$ we only need zero morphisms $\bz_{m,n}$ when $m\equiv n\bmod2$.
Endomorphism monoids $\TL_n^\Phi$ are known in the literature as Kauffman monoids \mbox{\cite{BDP2002,ACHLV2015,LF2006,KV2020,DE2017}}.
\end{defn}

\begin{rem}
\label{rem:Z}
One can replace $\NN$ by $\ZZ$ in the definitions of $\P^\Phi$, $\B^\Phi$ and $\TL^\Phi$, to obtain twisted categories on underlying sets $\ZZ\times\P$, $\ZZ\times\B$ and $\ZZ\times\TL$.  This has proved fruitful in the monoid case in a number of studies \cite{KV2020,EGPR2025,AV2020}.
\end{rem}

\section{Representing the partition category}\label{sect:Pn}

This section is central to the rest of the paper, and contains our main result, Theorem~\ref{thm:partition}, which gives a faithful involutive tensor representation of the partition category~$\P$ in the matrix category $\M$ over any (non-trivial) idempotent semiring.  Sections~\ref{subsect:01} and~\ref{subsect:IS} contain the relevant definitions and some preliminary lemmas, while Section~\ref{subsect:main} contains the theorem and its proof.  Finally, in Section \ref{subsect:min} we show that the dimensions of the matrices in the image of our representation are minimal for \emph{any} faithful involutive tensor representation $\P\to\M(K)$ for \emph{any} semiring $K$.

\subsection{A mapping to zero-one matrices}\label{subsect:01}

Fix a (non-trivial unital) semiring $K$, and let ${\M = \M(K)}$ be the involutive tensor category of matrices over $K$.  In all that follows, we shall view elements of $\M_{2^m,2^n} = \M_{2^m,2^n}(K)$ as matrices with rows and columns indexed by the subsets of $[m]$ and $[n]$, respectively.  Orderings on the rows and columns are not needed unless explicitly writing down matrices, in which case we shall be careful to specify such orders.  The following definition will be crucial.
In all that follows, when we say that a set is a union of sets from a particular collection of sets, we include the possibility that this union is empty.

\begin{defn}
\label{def:therep}
Given a (non-trivial) semiring $K$, we define the mapping
\[
\varphi = \varphi_K: \P \rightarrow \M(K):a\mapsto \overline a,
\]
where for each $a \in \P_{m,n}$ the matrix $\overline{a}\in \M_{2^m,2^n}(K)$ is defined, for $X\sub[m]$ and $Y\sub[n]$, by
\[
\overline{a}_{X,Y} = \begin{cases} 1 & \mbox{ if } X \cup Y' \mbox{ is a (possibly empty) union of } a\mbox{-blocks},\\
0 & \mbox{ else.}
\end{cases}
\]

\end{defn}

\begin{ex}\label{ex:P2_0}
Consider the following partitions, all from $\P_2$:
\[
a = \custpartn{1,2}{1,2}{\stline11\stline21\uarc12} \COMMA
b = \custpartn{1,2}{1,2}{\stline12\stline22\uarc12} \AND
c = \custpartn{1,2}{1,2}{\darc12} .
\]
Indexing the subsets of $[2]=\{1,2\}$ in the order $\es$, $\{1\}$, $\{2\}$, $\{1,2\}$, one can check that
\begin{equation}\label{eq:P2}
\ol a = \begin{pmatrix}
\fourrow1010\\
\fourrow0000\\
\fourrow0000\\
\fourrow0101
\end{pmatrix} \COMMa
\ol b = \begin{pmatrix}
\fourrow1100\\
\fourrow0000\\
\fourrow0000\\
\fourrow0011
\end{pmatrix} \ANd
\ol c = \begin{pmatrix}
\fourrow1001\\
\fourrow1001\\
\fourrow1001\\
\fourrow1001
\end{pmatrix} .
\end{equation}
We will return to the matrices of this example later.
\end{ex}

The mapping $\varphi:\P\to\M:a\mt\ol a$ is not necessarily a category representation in general; as we will see in the next section, preservation of composition depends on the structure of the underlying semiring $K$.  Before we get to this, however, we establish some basic properties of $\varphi$ that hold for arbitrary $K$, namely that it is injective, maps identity partitions to identity matrices, and preserves the involutions and tensor operations.

\newpage

\begin{prop}\label{prop:vp}
For all $n\in\NN$ and $a,b\in\P$ we have:
\ben
\item \label{vp1} $\ol a=\ol b \implies a=b$,
\item \label{vp2} $\ol{\io_n} = I_{2^n}$,
\item \label{vp3} $\ol{a^*} = \ol a^T$,
\item \label{vp4} $\ol{a\op b} = \ol a \ot \ol b$,
\een
where for \ref{vp4} we recall the indexing convention for Kronecker products from \eqref{eq:ot} above.
\end{prop}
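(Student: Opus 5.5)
All four parts will be proved directly from Definition~\ref{def:therep}. The key tool is one reformulation: $X\cup Y'$ is a union of $a$-blocks if and only if it is \emph{saturated} with respect to the equivalence on $[m]\cup[n]'$ whose classes are the $a$-blocks. In other words, whenever it contains one point of a block it contains the whole block.

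For \ref{vp1}, suppose first that $\ol a=\ol b$. The dimensions of the two matrices agree, so $a$ and $b$ lie in the same $\P_{m,n}$. Every block $Z=X\cup Y'$ of $a$ satisfies $\ol a_{X,Y}=1$, hence $\ol b_{X,Y}=1$, so $Z$ is a union of $b$-blocks. Symmetrically, every $b$-block is a union of $a$-blocks. Now take an $a$-block $Z$ and a $b$-block $W\subseteq Z$. Then $W$ is a union of $a$-blocks contained in the single $a$-block $Z$, which forces $W=Z$. Hence every $a$-block is a $b$-block, and by symmetry $a=b$. The one point to state carefully is that $0\neq1$ in $K$, so that $\ol a$ genuinely records the saturated sets.

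For \ref{vp2}, the blocks of $\io_n$ are the sets $\{i,i'\}$. A set $X\cup Y'$ is a union of these precisely when $X=Y$, so $\ol{\io_n}$ is the identity matrix $I_{2^n}$.

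For \ref{vp3}, the blocks of $a^*$ are the images of the blocks of $a$ under the relabelling $i\leftrightarrow i'$. Therefore $Y\cup X'$ is a union of $a^*$-blocks if and only if $X\cup Y'$ is a union of $a$-blocks, which gives $\ol{a^*}_{Y,X}=\ol a_{X,Y}$, that is, $\ol{a^*}=\ol a^T$.

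For \ref{vp4}, let $a\in\P_{m,n}$ and $b\in\P_{s,t}$, and write $P=X\cup(U+m)$ and $Q=Y\cup(V+n)$ as in \eqref{eq:ot}. The blocks of $a\op b$ are the $a$-blocks, which live in $[m]\cup[n]'$, together with the shifted $b$-blocks, which live in the complementary vertex set. A subset of $[m+s]\cup[n+t]'$ is saturated for $a\op b$ if and only if its part in the first vertex set is saturated for $a$ and its (unshifted) part in the second is saturated for $b$. So $P\cup Q'$ is a union of $(a\op b)$-blocks if and only if $X\cup Y'$ is a union of $a$-blocks and $U\cup V'$ is a union of $b$-blocks. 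Since the entries are $0$ or $1$ and $1\cdot1=1$ while any product involving $0$ is $0$, this gives $(\ol{a\op b})_{P,Q}=\ol a_{X,Y}\,\ol b_{U,V}$. By \eqref{eq:ot} this is exactly $(\ol a\ot\ol b)_{P,Q}$.

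None of these steps is a real obstacle. The most care is needed in \ref{vp1}, for the antichain-type argument, and in \ref{vp4}, for the bookkeeping of the index bijection.
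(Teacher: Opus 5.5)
Your proposal is correct and follows essentially the same route as the paper for all four parts. The only difference is cosmetic: in part (i) the paper reads off the $a$-blocks directly as the minimal non-empty sets $X\cup Y'$ with $\ol a_{X,Y}=1$, whereas you run a mutual-refinement argument. That argument is equivalent, and it is the same idea the paper uses in its remark on the refinement order.
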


\begin{proof}
\ref{vp1}  This follows from the fact that the blocks of a partition $a\in \P$ can be recovered from the matrix $\overline{a}\in\M$. Indeed, 
if $ a\in \P_{m,n}$, then the $a$-blocks are precisely the minimal (with respect to inclusion) non-empty sets in $\{X\cup Y':X\sub[m],\ Y\sub[n],\ \ol a_{X,Y}=1\}$.

\ref{vp2}  For $X,Y\sub[n]$ it is easy to see that $X \cup Y'$ is a union of $\io_n$-blocks if and only if $X = Y$, and the claim follows.

\ref{vp3}  This follows from the obvious fact that $X\cup Y'$ is an $a$-block if and only if $Y\cup X'$ is an $a^*$-block.

\ref{vp4}  Suppose $a\in\P_{m,n}$ and $b\in\P_{s,t}$, so that $a\op b\in\P_{m+s,n+t}$.  Let $P\sub[m+s]$ and $Q\sub[n+t]$, and write $P = X\cup (U+m)$ and $Q = Y\cup (V+n)$, where $X\sub[m]$, $Y\sub[n]$, $U\sub[s]$ and $V\sub[t]$.  Then keeping \eqref{eq:ot} in mind, we have
\begin{align*}
(\ol{a\op b})_{P,Q} = 1 &\iff P\cup Q' \text{ is a union of $(a\op b)$-blocks}\\
&\iff X\cup Y' \text{ is a union of $a$-blocks and } U\cup V' \text{ is a union of $b$-blocks}\\
&\iff \ol a_{X,Y} = 1 \text{ and } \ol b_{U,V} = 1 \\
&\iff \ol a_{X,Y} \ol b_{U,V} =1 \\
&\iff (\ol a \ot \ol b)_{P,Q} =1 .  \qedhere
\end{align*}
\end{proof}

\begin{rem}
The map $\varphi$ preserves another natural structure on $\P$, in the form of an order studied (in the monoid case) for example in \cite{ER2022a,FitzGerald2013}.  For $a,b\in\P_{m,n}$ we write $a\preceq b$ if $a$ \emph{refines}~$b$, in the sense that every $a$-block is contained in a $b$-block, or equivalently that every $b$-block is a union of $a$-blocks.  This is the same as saying that the equivalence relation on $[m]\cup[n]'$ associated to $a$ is contained in that associated to~$b$.  If $ a \preceq b$ then~$\ol b_{X,Y}=1$ clearly implies $\ol a_{X,Y}=1$ for $X\sub[m]$ and $Y\sub[n]$. Conversely, if for all subsets $X\sub[m]$ and $Y\sub[n]$ we have $\ol b_{X,Y}=1 \implies \ol a_{X,Y}=1$, then  by choosing $X$ and~$Y$ so that $X\cup Y'$ is a $b$-block we find that each 
$b$-block must be a union $a$-blocks. Thus, writing $A\preceq B$ for zero-one matrices $A$ and $B$ of the same dimensions to mean that $A_{i,j}\leq B_{i,j}$ for all $i,j$ (where $0<1$), we then have
\[
a\preceq b \quad \Leftrightarrow\quad \ol a \succeq \ol b \qquad\text{for all }a,b\in\P.
\]
(Incidentally, this also implies that $\varphi$ is injective.)
When $K=\mathbb B$ is the Boolean semiring, the $\preceq$ ordering on matrices corresponds to subset inclusion of the associated binary relations.  Thus, identifying $\M(\mathbb B)$ with the relation category $\R$, we have
\[
a\preceq b \quad\Leftrightarrow\quad \ol a \supseteq \ol b \qquad\text{for all } a,b\in\P.
\]
\end{rem}

\subsection{Intertwining sets}\label{subsect:IS}

As we have already said, the behaviour of the matrices~$\overline{a}$ under multiplication depends on the structure of the underlying semiring $K$.  To address this, we require the notion of an \emph{intertwining set}.  Working towards the definition, we begin by introducing some convenient notation and terminology.  For $a\in\P_{m,n}$, and for $X \sub[m]$ and $Y\sub[n]$, we write
\begin{align*}
Xa &= \{j \in [n]: \text{there exists $x \in X$ such that $x$ and $j'$ are in the same $a$-block}\},\\
aY &= \{j \in [m]: \text{there exists $y \in Y$ such that $j$ and $y'$ are in the same $a$-block}\}.
\end{align*}
One can think of $Xa$ as the `image' of $X$ under $a$, and $aY$ as the `preimage' of $Y$.

\begin{lem}
\label{lem:ones}
For $a \in \P_{m,n}$, and for $X\sub[m]$ and $Y \subseteq [n]$, the following are equivalent:
\ben
\item \label{ones1} $\overline{a}_{X,Y} = 1$,
\item \label{ones2} $X$ is a union of ${\rm ker}(a)$-classes and $Y = Xa \cup P$ where $P'$ is a union of lower $a$-blocks,
\item \label{ones3} $Y$ is a union of ${\rm coker}(a)$-classes and $X = aY \cup Q$ where $Q$ is a union of upper $a$-blocks.
\een
\end{lem}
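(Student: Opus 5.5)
We will prove \ref{ones1}$\iff$\ref{ones2} directly, and then obtain \ref{ones1}$\iff$\ref{ones3} by applying this equivalence to $a^*$. By Proposition~\ref{prop:vp}\ref{vp3} we have $\ol a_{X,Y}=\ol{a^*}_{Y,X}$. Under the relabelling $i\leftrightarrow i'$, kernel and cokernel swap, upper and lower blocks swap, and $Y a^* = aY$. So the whole argument reduces to the first equivalence.

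For \ref{ones1}$\Rightarrow$\ref{ones2}, suppose $X\cup Y'$ is a union of $a$-blocks. Each $\ker(a)$-class is the intersection with $[m]$ of a single block. A block meeting $X$ is contained in $X\cup Y'$, so $X$ is a union of such intersections, i.e.\ of $\ker(a)$-classes. Next, if $x\in X$ and $j'$ lies in the block of $x$, then that block is inside $X\cup Y'$, so $j\in Y$; hence $Xa\sub Y$. Put $P=Y\sm Xa$. For $j\in P$, the block of $j'$ lies inside $X\cup Y'$. If this block contained an upper vertex, that vertex would be in $X$, forcing $j\in Xa$, a contradiction. So the block is lower, and it lies entirely in $P'$: any other $k'$ in it belongs to $Y$, and also cannot be in $Xa$ by the same reasoning. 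Therefore $P'$ is a union of lower blocks and $Y=Xa\cup P$.

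For \ref{ones2}$\Rightarrow$\ref{ones1}, we show that every block $B$ meeting $X\cup Y'$ lies inside it. There are three cases.
\begin{itemize}
\item If $B$ meets $X$, then $B\cap[m]\sub X$, since $X$ is a union of kernel classes. Also every lower vertex $j'\in B$ has $j\in Xa\sub Y$.
\item If $B$ meets $(Xa)'$ at some $j'$, then $j'$ shares a block with some $x\in X$, so $B$ meets $X$ and we are in the first case.
\item If $B$ meets $P'$ but not $(Xa)'$, then $B$ is one of the lower blocks making up $P'$, so $B\sub P'\sub Y'$.
\end{itemize}
Hence $X\cup Y'$ is a union of $a$-blocks, i.e.\ $\ol a_{X,Y}=1$.

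The argument is elementary throughout. The only delicate point is showing in \ref{ones1}$\Rightarrow$\ref{ones2} that $P'$ is exactly a union of lower blocks: one must check that such a block cannot meet $(Xa)'$. This holds because $Xa$ is determined by transversals meeting $X$, which are disjoint from lower blocks.
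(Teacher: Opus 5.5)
Your proof is correct and follows the same route as the paper. The paper proves \ref{ones1}$\Rightarrow$\ref{ones2} by writing $X\cup Y'$ as a union of transversals $A_i\cup B_i'$, upper blocks $C_i$ and lower blocks $D_i'$, and reading off $X$, $Y$ and $Xa=B_1\cup\cdots\cup B_r$; it calls \ref{ones2}$\Rightarrow$\ref{ones1} clear and \ref{ones1}$\iff$\ref{ones3} dual. Your element-wise argument (where $P=Y\setminus Xa$ is exactly $D_1\cup\cdots\cup D_t$), your case check for the converse, and your explicit reduction to $a^*$ via Proposition~\ref{prop:vp}\ref{vp3} fill in those steps.
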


\begin{proof}
\ref{ones1}$\implies$\ref{ones2}
If $X \cup Y'$ is a union of $a$-blocks, say of transversals $A_1\cup B_1',\ldots,A_r\cup B_r'$, upper blocks $C_1,\ldots,C_s$ and lower blocks $D_1',\ldots,D_t'$, then $X = A_1\cup\cdots\cup A_r\cup C_1\cup\cdots\cup C_s$, $Y = B_1\cup\cdots\cup B_r\cup D_1\cup\cdots\cup D_t$ and $Xa=B_1\cup\cdots\cup B_r$, and \ref{ones2} quickly follows.

The implication \ref{ones2}$\implies$\ref{ones1} is clear.  The equivalence \ref{ones1}$\iff$\ref{ones3} is dual.
\end{proof}

\begin{defn}
\label{defn:is}
Consider a composable pair of partitions $a\in\P_{m,n}$ and $b\in\P_{n,t}$, and a pair of subsets $X\sub[m]$ and $Y\sub[t]$.  We say that a subset $Z \subseteq [n]$ is an \textit{intertwining set} for $(a,b)$ with respect to $(X,Y)$ if $X \cup Z'$ is a union of $a$-blocks and $Z \cup Y'$ is a union of $b$-blocks, i.e.~if $\ol a_{X,Z} = 1 = \ol b_{Z,Y}$.
We note that for such a $Z$ to exist, $X$ must be a union of $\ker(a)$-classes, and $Y$ a union of $\coker(b)$-classes.
\end{defn}

\begin{ex}
For $a\in \P_{4,6}$ and $b\in \P_{6,5}$ from Figure \ref{fig:P6}, $X=[4]=\{1,2,3,4\}$ is a union of $\ker(a)$-classes and $Y=\{1,4,5\}$ a union of $\coker(b)$-classes. Moreover, one can check that the intertwining sets are precisely $\{3,4,5\}$ and $[6] = \{1,2,3,4,5,6\}$.
\end{ex}

The next result is immediate from the definitions.

\begin{lem}
\label{lem:prod}
Let $a\in\P_{m,n}$ and $b \in \P_{n,t}$.  Then for any $X\sub[m]$ and $Y\sub[t]$ we have
\[
(\ol a\, \ol b)_{X,Y} = \sum_{Z \subseteq [n]} \overline{a}_{X,Z}\overline{b}_{Z,Y} = 
\underbrace{1 + \cdots + 1}_{k}\in K,
\]
where $k$ is the number of interwining sets for $(a,b)$ with respect to $(X,Y)$.
\end{lem}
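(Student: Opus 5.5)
This is immediate from the definitions, and the plan is simply to unwind them in order. First I write out matrix multiplication in $\M(K)$. Since $\ol a\in\M_{2^m,2^n}(K)$ and $\ol b\in\M_{2^n,2^t}(K)$ are indexed by subsets of $[m]$, $[n]$ and $[t]$, the usual formula for the product gives
\[
(\ol a\,\ol b)_{X,Y}=\sum_{Z\sub[n]}\ol a_{X,Z}\,\ol b_{Z,Y}
\]
directly, with the middle index ranging over all $Z\sub[n]$. The choice of ordering of rows and columns is irrelevant here, since the sum ranges over the whole index set.

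Next, by Definition~\ref{def:therep}, every entry of $\ol a$ and $\ol b$ lies in $\{0,1\}\sub K$. In any semiring we have $0\cdot0=0\cdot1=1\cdot0=0$ (since $0$ is a multiplicative zero) and $1\cdot1=1$. Hence each summand $\ol a_{X,Z}\ol b_{Z,Y}$ equals $1$ exactly when $\ol a_{X,Z}=1=\ol b_{Z,Y}$, and equals $0$ otherwise.

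By Definition~\ref{defn:is}, the condition $\ol a_{X,Z}=1=\ol b_{Z,Y}$ says precisely that $Z$ is an intertwining set for $(a,b)$ with respect to $(X,Y)$. Discarding the zero summands, which is valid because $0$ is the additive identity, leaves a sum of $k$ copies of $1$, where $k$ is the number of such $Z$. If $k=0$ this is the empty sum, namely $0$, which agrees with the stated convention.

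There is no real obstacle in this argument. The only point needing care is that it uses nothing about $K$ beyond the semiring axioms. In particular it does not use commutativity, since each product is simply $1\cdot1$, $1\cdot0$, $0\cdot1$ or $0\cdot0$.
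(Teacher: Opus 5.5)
Your proof is correct and takes the same approach as the paper, which simply notes that the lemma is immediate from the definitions. You have written that unwinding out explicitly: the matrix product over subset indices, the fact that a product of $0/1$ entries is $1$ exactly when both factors are $1$, and Definition~\ref{defn:is} of an intertwining set.
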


It should now be clear from Lemma \ref{lem:prod} that to determine whether $\varphi=\varphi_K$ is a morphism requires some combinatorics. Specifically, we need to count (in the additive operation of $K$) the number of intertwining sets for pairs of partitions and sets.  We do so in the next lemma, which involves the twisting parameters $\Phi(a,b)$ from Definition \ref{defn:Phi}.  Certain notation used in the proof will be used again later in the paper, so we establish this before we give the statement.

\begin{defn}\label{defn:Z0}
Consider a composable pair of partitions $a\in\P_{m,n}$ and $b\in\P_{n,t}$, and a pair of subsets $X\sub[m]$ and $Y\sub[n]$.  Define $\sim$ to be the join of the equivalence relations $\coker(a)$ and $\ker(b)$ on $[n]$, i.e.~the transitive closure of $\coker(a) \cup \ker(b)$.  
The significance of this relation is that if $C$ is a connected component of the product graph $\Pi(a,b)$, then $C\cap[n]''$ is either empty, or is equal to $D''$ for some $\sim$-class $D$.
For $j \in [n]$ let $[j]_\sim$ denote the $\sim$-class containing $j$, and define 
\[
Z_0 = Z_0(a,b,X,Y) : = \bigcup_{j \in Xa \cup bY} [j]_\sim.
\]
\end{defn}

\begin{lem}
\label{lem:intertwine}
With the notation of Definition \ref{defn:Z0}, and for $Z\subseteq[n]$, the following are equivalent:
\ben
    \item \label{intertwine1} $Z$ is an intertwining set for $(a,b)$ with respect to $(X,Y)$,
    \item \label{intertwine2} $X \cup Y'$ is a union of $ab$-blocks \emph{(}i.e.~$\ol{ab}_{X,Y}=1$\emph{)} and $Z=Z_0 \cup F$ where $F''$ is a union of floating components of the product graph $\Pi(a,b)$.
\een
Consequently, the number of intertwining sets for $(a,b)$ with respect to $(X, Y)$ is $2^{\Phi(a,b)}$ if $X \cup Y'$ is a union of $ab$-blocks, or $0$ otherwise.
\end{lem}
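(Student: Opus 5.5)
The plan is to reason about connected components of the product graph $\Pi(a,b)$. The key reformulation: a set $Z\sub[n]$ is intertwining for $(a,b)$ with respect to $(X,Y)$ if and only if $W:=X\cup Z''\cup Y'$ is a union of connected components of $\Pi(a,b)$. Indeed, $X\cup Z'$ being a union of $a$-blocks says that no edge of $a^\downarrow$ leaves $X\cup Z''$. Likewise, $Z\cup Y'$ being a union of $b$-blocks says that no edge of $b^\uparrow$ leaves $Z''\cup Y'$. Since the edge set of $\Pi(a,b)$ is the union of these two edge sets, and $a^\downarrow$, $b^\uparrow$ share only the middle vertices, these two conditions together say exactly that $W$ is closed under adjacency in $\Pi(a,b)$. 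I will verify this carefully, since it is the whole engine of the proof.

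For \ref{intertwine1}$\implies$\ref{intertwine2}, suppose $W$ is a union of components. The components meeting $[m]\cup[t]'$ have traces on $[m]\cup[t]'$ that are exactly the $ab$-blocks. Hence $X\cup Y'=W\cap([m]\cup[t]')$ is a union of $ab$-blocks. Next, the components of $\Pi(a,b)$ contained in $W$ are of two kinds: those meeting $X\cup Y'$, and floating ones. I claim that the middle parts of the first kind form precisely $Z_0''$.
\begin{itemize}
\item A component meeting $X$ either has empty middle part, or its middle part is $D''$ for a $\sim$-class $D$ containing some $j\in Xa$. This holds because a path from $x\in X$ into the middle row starts with an $a^\downarrow$-edge to some $j''$ with $j\in Xa$.
\item Symmetrically, a component meeting $Y'$ has middle part that is empty or the $\sim$-class of some $j\in bY$.
\item Conversely, every $[j]_\sim''$ with $j\in Xa\cup bY$ lies in a component meeting $X\cup Y'$.
\end{itemize}
Therefore $Z=Z_0\cup F$, where $F''$ is the union of the floating components inside $W$.

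For \ref{intertwine2}$\implies$\ref{intertwine1}, suppose $X\cup Y'$ is a union of $ab$-blocks. Let $W_0$ be the union of the components of $\Pi(a,b)$ meeting $X\cup Y'$. Each such component meets $[m]\cup[t]'$ in an $ab$-block contained in $X\cup Y'$. Hence $W_0\cap([m]\cup[t]')=X\cup Y'$, and by the description above $W_0\cap[n]''=Z_0''$. Adding any union $F''$ of floating components gives a union of components $X\cup(Z_0\cup F)''\cup Y'$, so by the reformulation $Z_0\cup F$ is intertwining.

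For the counting consequence, note that distinct choices of the union of floating components give distinct sets $Z$. This is because floating components are disjoint from $Z_0''$: they are components not meeting $X\cup Y'$, whereas $Z_0''$ lies in components that do. So there are exactly $2^{\Phi(a,b)}$ intertwining sets when $\ol{ab}_{X,Y}=1$, and none otherwise.

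The main obstacle is the bookkeeping in identifying $Z_0''$ with the middle vertices of components meeting $X\cup Y'$. This needs the remark in Definition \ref{defn:Z0} that middle parts of components are $\sim$-classes, together with care about components, such as upper blocks of $a$, that never reach the middle row.
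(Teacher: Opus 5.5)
Your proof is correct, but it takes a different route from the paper's.

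\textbf{How the paper argues.} The paper never uses the global statement that $X\cup Z''\cup Y'$ is a union of components of $\Pi(a,b)$.
\begin{itemize}
\item For \ref{intertwine2}$\Rightarrow$\ref{intertwine1}, it checks directly that $X\cup Z_0'$ is a union of $a$-blocks, and dually for $Z_0\cup Y'$. This is done by four cases on whether $u,v$ lie in $[m]$ or $[n]'$. Only the last case uses a path in $\Pi(a,b)$ and the hypothesis $\ol{ab}_{X,Y}=1$. It then adjoins $F$, using that $F'$ is a union of lower $a$-blocks and $F$ is a union of upper $b$-blocks.
\item For \ref{intertwine1}$\Rightarrow$\ref{intertwine2}, it shows that $Z$ is a union of $\sim$-classes. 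From this it deduces $Z_0\subseteq Z$ and that $F=Z\setminus Z_0$ is a union of floating components. Only then does it prove that $X\cup Y'$ is a union of $ab$-blocks, by a three-case path argument with the components of $a$ and $b$ taken to be complete graphs.
\end{itemize}

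\textbf{What your approach buys.} Your closure reformulation replaces all of this case analysis with one observation. Both directions then follow from a single fact: the middle vertices of the components meeting $X\cup Y'$ are exactly $Z_0''$. You also make explicit that floating components are disjoint from $Z_0''$. This is what actually justifies the count $2^{\Phi(a,b)}$. The paper leaves it implicit, and later invokes it ``by definition of $Z_0$'' in Lemma~\ref{lem:fn2}. The paper's argument is more hands-on, working throughout with blocks of $a$ and $b$ rather than with closure in $\Pi(a,b)$.

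\textbf{A small phrasing point.} Unless you take complete graphs, a path from $x\in X$ to the middle row need not begin with an edge into $[n]''$. What you need is that the segment up to the first middle vertex $j''$ uses only edges of $a^\downarrow$, since vertices of $[m]$ have no other edges. This gives $j\in Xa$.
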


\begin{proof}
\ref{intertwine2} $\Rightarrow$ \ref{intertwine1}.
Suppose first that $X \cup Y'$ is a union of $ab$-blocks and $Z = Z_0 \cup F$ where~$F''$ is a union of floating components of $\Pi(a,b)$. In particular, $X$ is a union of ${\rm ker}(ab)$-classes and~$Y$ is a union of ${\rm coker}(ab)$-classes. We need to show that $X\cup Z'$ is a union of $a$-blocks and $Z \cup Y'$ is a union of $b$-blocks. Since $F'$ is a union of lower $a$-blocks and $F$ a union of upper $b$-blocks, it suffices to show that $X\cup Z_0'$ is a union of $a$-blocks and $Z_0 \cup Y'$ is a union of $b$-blocks. We show the first of these two statements, the second being dual.

Suppose then that $u \in X \cup Z_0'$, and that $v$ is in the same $a$-block as $u$. We need to show that $v \in X \cup Z_0'$ too. The proof now splits into four cases, depending on whether each of $u$ and $v$ lies in~$[m]$ or~$[n]'$.

\medskip
Case 1:  $u,v \in [m]$. Since $u\in X \cup Z_0'$ we must have $u \in X$, and since $u$ and $v$ lie in the same $a$-block we also have $(u,v) \in {\rm ker}(a)$. Since $X$ is a union of $\ker(ab)$-classes, and hence of $\ker(a)$-classes, it follows that $v \in X\sub X\cup Z_0'$.

\medskip
Case 2:  $u,v \in [n]'$. Let $p,q \in [n]$ be such that $u=p'$ and $v=q'$. Then since~$u$ and~$v$ lie in the same $a$-block, $(p,q) \in {\rm coker}(a)$ and since $u \in X \cup Z_0'$ we deduce that $p \in Z_0$. By definition, $Z_0$ is a union of $\sim$-classes, and hence also of ${\rm coker}(a)$-classes, so it follows that $q \in Z_0$, giving $v \in Z_0' \subseteq X \cup Z_0'$.

\medskip
Case 3:  $u \in [m]$ and $v \in [n]'$. As before, we have $u \in X$ and $v=q'$ for some $q \in [n]$. Since $u$ and $v$ lie in the same $a$-block, we then have $q \in Xa \subseteq Z_0$, giving $v \in Z_0' \subseteq X \cup Z_0'$.

\medskip
Case 4:  $u \in [n]'$ and $v \in [m]$. Here we have $u=p'$ for some $p \in Z_0$, and by the definition of~$Z_0$ there exists $q \in Xa \cup bY$ such that $p \sim q$. In other words, there exists a path from $p''$ to~$q''$ in the product graph $\Pi(a, b)$. 
\bit
\item If $q \in Xa$, let $x \in X$ be an element in the same $a$-block as $q'$. Since $v$ is in the same $a$-block as $u=p'$, since there is a path in $\Pi(a,b)$ from~$p''$ to~$q''$, and since~$q'$ is in the same $a$-block as $x$, it follows that~$v$ and~$x$ are in the same $ab$-block.  Then since $x \in X$ and $X \cup Y'$ is a union of $ab$-blocks, it follows that $v \in X$.
\item On the other hand, if $q \in bY$, let $y \in Y$ be an element such that $y'$ is in the same $b$-block as $q$. Since $v$ is in the same $a$-block as $u=p'$, since there is a path in $\Pi(a,b)$ from~$p''$ to~$q''$, and since $q$ is in the same $b$-block as $y'$, it follows that~$v$ and~$y'$ are in the same $ab$-block. Then since $y' \in Y'$ and $X \cup Y'$ is a union of $ab$-blocks, it follows that $v \in X$. 
\eit
Thus in both cases $v \in X  \subseteq X \cup Z_0'$.  

\bigskip

\ref{intertwine1} $\Rightarrow$ \ref{intertwine2}.  Conversely, suppose that $Z$ is an intertwining set for $(a,b)$ with respect to $(X,Y)$, i.e.~that $X \cup Z'$ is a union of $a$-blocks, and $Z \cup Y'$ is a union of $b$-blocks. In particular,~$Z$ is a union of ${\rm coker}(a)$-classes, and also a union of ${\rm ker}(b)$-classes. We first note that~$Z$ is also a union of $\sim$-classes. 
Indeed, for $z \in Z$ and $w \sim z$, it follows by the definition of $\sim$ that there is a sequence $z = z_0 , z_1 , \ldots , z_k = w$ such that each $(z_i,z_{i+1})\in\coker(a)\cup\ker(b)$; since $Z$ is a union of $\coker(a)$-classes and of $\ker(b)$-classes, it follows that each $z_i\in Z$, and in particular $w = z_k\in Z$.

Now, since $X \cup Z'$ is a union of $a$-blocks we have $Xa \subseteq Z$, and since $Z \cup Y'$ is a union of $b$-blocks, we have $bY \subseteq Z$. It then follows from the definition of $Z_0$ together with the fact that $Z$ is a union of $\sim$-classes that $Z_0 \subseteq Z$. Let $F=Z \setminus Z_0$. Since $X \cup Z'$ is a union of $a$-blocks we see that $F'$ is a union of lower $a$-blocks; dually $F$ is a union of upper $b$-blocks. Since both $Z$ and $Z_0$ are unions of $\sim$-classes, it follows that $F$ is a union of $\sim$-classes, and hence $F''$ is a union of floating components of $\Pi(a,b)$. It now remains to show that $X \cup Y'$ is a union of $ab$-blocks. 
 
Suppose then that $u \in X \cup Y'$, and that $v\in[m]\cup[t]'$ is in the same $ab$-block as $u$; we must show that $v\in X\cup Y'$.
Since $u$ and $v$ are in the same $ab$-block, there is a path from $u$ to $v$ in~$\Pi(a,b)$.

\medskip
Case 1:  If the path uses only vertices from $[m]$, then $u\in X$, $v\in[m]$, and the path uses only edges from $a$.  It follows that $u$ and~$v$ belong to the same $a$-block.  Since $X\cup Z'$ is a union of $a$-blocks, and since $u\in X$, it follows that $v\in X\cup Z'$, and hence $v\in X\sub X\cup Y'$.

\medskip
Case 2:  If the path uses only vertices from $[t]'$, then an analogous argument gives $v\in Y'\sub X\cup Y'$.

\medskip
Case 3:  Finally, suppose that the path from $u$ to $v$ involves vertices from $[n]''$.  Choosing the edge sets of the graphs representing $a$ and $b$ so that each connected component is a complete graph we can assume that the path has the form
\[
u \to p_1'' \to \cdots \to p_k'' \to v \qquad\text{for distinct $p_1,\ldots,p_k\in[n]$.}
\]
It follows that each $(p_i,p_{i+1}) \in \coker(a)\cup\ker(b)$, so that $p_1\sim p_k$.  Because of the first edge in the path, we either have $p_1 \in Xa$ (if $u \in X$) or $p_1 \in bY$ (if $u \in Y'$), from which it follows that $p_1, \ldots, p_k \in Z_0 \subseteq Z$.  Because of the last edge in the path, we see that either $v$ and~$p_k'$ lie in the same $a$-block (if $v\in [m]$) or else $v$ and $p_k$ lie in the same $b$-block (if $v\in[t]'$).  Since $X \cup Z'$ is a union of $a$-blocks, and $Z \cup Y'$ is a union of $b$-blocks, we deduce in these respective cases that $v \in X$ or $v\in Y'$, so indeed $v\in X\cup Y'$.
\end{proof}

\subsection{The main result}\label{subsect:main}

We now have all the pieces required to prove our main result:

\begin{thm}
\label{thm:partition}
Let $K$ be a non-trivial idempotent semiring. Then
\[
\varphi = \varphi_K:\P\to\M(K):a\mt\ol a
\]
is a faithful involutive tensor category representation. In particular, for each $n \in \NN$ the map~$\varphi$ gives a faithful involutive representation of the monoid $\P_n$ by $2^n\times 2^n$ matrices over~$K$.
\end{thm}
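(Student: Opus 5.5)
Almost everything needed is already in place, so the proof is an assembly of earlier results; the only genuinely new point is that $\varphi$ preserves composition. By Proposition~\ref{prop:vp}, $\varphi$ is injective on morphisms, sends $\io_n$ to $I_{2^n}$, intertwines ${}^*$ with transpose, and satisfies $\ol{a\op b}=\ol a\ot\ol b$. These facts hold for every semiring $K$. On objects, $\varphi$ acts as $n\mapsto 2^n$, which is injective. It also converts the additive object monoid $(\NN,+)$ of $\P$ into the multiplicative object monoid $(\NN,\cdot)$ of $\M$, because $2^{m+s}=2^m2^s$; this matches the indexing convention \eqref{eq:ot} used for the Kronecker product. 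Domains and ranges are respected, since $a\in\P_{m,n}$ gives $\ol a\in\M_{2^m,2^n}$.

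The key step is to show that $\ol{ab}=\ol a\,\ol b$ for composable $a\in\P_{m,n}$ and $b\in\P_{n,t}$. Fix $X\sub[m]$ and $Y\sub[t]$. By Lemma~\ref{lem:prod}, $(\ol a\,\ol b)_{X,Y}$ equals the sum $1+\cdots+1$ with $k$ terms, where $k$ is the number of intertwining sets for $(a,b)$ with respect to $(X,Y)$. By Lemma~\ref{lem:intertwine}, we have $k=2^{\Phi(a,b)}\ge1$ when $\ol{ab}_{X,Y}=1$, and $k=0$ otherwise.

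Now use idempotency of $K$. From $1+1=1$ it follows that any sum of $k\ge1$ ones equals $1$, while the empty sum is $0$. Hence $(\ol a\,\ol b)_{X,Y}=\ol{ab}_{X,Y}$ for all $X,Y$. Together with the identity property, this shows $\varphi$ is a functor, and the first paragraph upgrades it to a faithful involutive tensor representation.

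For the monoid statement, restrict $\varphi$ to the hom-set $\P_n=\P_{n,n}$. This gives an injective monoid morphism into $\M_{2^n}(K)$ that respects the involution.

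I expect no real obstacle, since the combinatorial difficulty was isolated in Lemma~\ref{lem:intertwine}. The only care needed is bookkeeping: check that the tensor axiom for objects matches under $n\mapsto2^n$, and note that for the empty matrix cases ($m$, $n$ or $t$ equal to $0$) the power sets are singletons $\{\es\}$, so nothing degenerates.
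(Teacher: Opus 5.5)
Your proposal is correct and follows essentially the same route as the paper: Proposition~\ref{prop:vp} supplies injectivity, preservation of identities, involution and tensor, and preservation of composition comes from Lemmas~\ref{lem:prod} and~\ref{lem:intertwine} combined with $1+1=1$. One harmless wording slip: when $m$, $n$ or $t$ is $0$ the matrices are not empty, since $2^0=1$, which is exactly what your singleton remark shows.
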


\begin{proof}
By Proposition \ref{prop:vp}, it remains only to show that $\ol{ab} = \ol a\ol b$ for any composable pair of partitions $a\in\P_{m,n}$ and $b\in\P_{n,t}$.  
By Lemmas \ref{lem:prod} and \ref{lem:intertwine}, together with the fact that $1+1=1$ in~$K$, for all $X\sub[m]$ and $Y \subseteq [t]$ we have
\begin{align*}
(\overline{a}\overline{b})_{X,Y} = \sum_{Z \subseteq [n]} \overline{a}_{X,Z}\overline{b}_{Z,Y} &= \begin{cases}
1 & \mbox{ if }X \cup Y' \mbox{ is a union of } ab\mbox{-blocks,}\\
0 & \mbox{ else}
\end{cases}\\
&= (\overline{ab})_{X,Y}.  
\end{align*}
The final statement is clear, as $\varphi$ maps $\P_{m,n}$ into $\M_{2^m,2^n}$ for any $m,n\in\NN$.
\end{proof}

\begin{rem}\label{rem:MS1997}
Consider the case in which $K$ is the Boolean semiring $\BB$.  The image under $\varphi=\varphi_\BB$ of a partition monoid $\P_n$ in $\M_{2^n}(\BB)$ has the property that for any $X, Y \subseteq [n]$ there exists $a\in\P_n$ such that $\overline{a}_{X,Y} =1$.  Recalling that $\M_{2^n}(\mathbb{B})$ is isomorphic to the monoid of binary relations $\R_{2^n}$, this means precisely that the image of $\P_n$ in $\R_{2^n}$ is a \emph{transitive} semigroup of binary relations.  This provides a concrete instance of a deep theorem of McKenzie and Schein \cite{MS1997}, which says that \emph{every} semigroup is isomorphic to a transitive semigroup of binary relations.

On the other hand, the image under $\varphi$ of a Brauer monoid $\B_n$ is \emph{not} a transitive submonoid of $\R_{2^n}$.  Rather, there are two orbits under the action of~$\B_n$: the subsets of~$[n]$ of even size, and those of odd size.  We will return to this observation in Section~\ref{sect:Bn}, and use it to construct lower-dimensional representations of $\B$.
\end{rem}

\begin{rem}\label{rem:conv}
As noted in the above proof, Theorem \ref{thm:partition} boils down to the assertion that when $K$ is an idempotent semiring, the mapping $\varphi$ is a morphism.  Conversely, we show that if the map $\P_n \rightarrow \M_n(K):a\mt\ol a$ is a morphism for some fixed $n\geq1$ (which of course holds if $\varphi$ itself is a morphism), then $K$ is idempotent.  Indeed, let $n\geq1$, and let $a = \begin{partn}{1} \bn \ \\\hline \bn \ \end{partn} \in \P_n$ be the partition with two blocks,~$\bn$~and~$\bn'$.  If we order the subsets of $\bn$ as $\es,\bn,\ldots$ (with any ordering on the remaining subsets), then
\[
\ol a = \begin{pmatrix}
1&1&0&\cdots&0\\
1&1&0&\cdots&0\\
0&0&0&\cdots&0\\
\vdots&\vdots&\vdots&\ddots&\vdots\\
0&0&0&\cdots&0
\end{pmatrix}.
\]
Since $a$ is an idempotent, $\varphi$ being a morphism implies that $\ol a$ is also an idempotent, and it is easy to see that this is equivalent to having $1+1=1$.
\end{rem}

\subsection{Minimality of the representation}\label{subsect:min}

As we noted in Section \ref{subsect:rep}, one can measure the `efficiency' of a matrix representation of a category by looking at the dimensions of the matrices in the image.  For our representation $\varphi:\P\to\M$, these dimensions are powers of $2$; specifically, we have $n \varphi=2^n$ for each $n\in\NN$.  It turns out that these dimensions are the minimum possible among all faithful \emph{involutive tensor category} representations $\P\to\M$.  Indeed, this is true for any of the (twisted) diagram categories we consider:

\newpage

\begin{thm}\label{thm:min}
Let $K$ be a unital semiring, and let $\C$ be any of $\P$, $\B$ or $\TL$, or any of their \emph{(}$d$-\emph{)}twisted counterparts.  Then for any faithful involutive tensor category representation $\psi:\C\to\M(K)$, there exists an integer $d\geq2$ such that $n\psi = d^n$ for all~$n\in\NN$.  
\end{thm}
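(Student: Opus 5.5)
The plan is to exploit the fact that $\psi$ is a tensor functor between categories with object monoids $(\NN,+)$ and $(\NN,\cdot)$. Preservation of tensor products gives, on identity morphisms,
\[
\io_{m+n}\psi = (\io_m\op\io_n)\psi = \io_m\psi\ot\io_n\psi = I_{m\psi}\ot I_{n\psi} = I_{(m\psi)(n\psi)},
\]
and since $\io_k\psi = I_{k\psi}$ this yields $(m+n)\psi = (m\psi)(n\psi)$. Thus the map $n\mt n\psi$ is a monoid homomorphism $(\NN,+)\to(\NN,\cdot)$. Consequently $0\psi=1$ (from $\io_0\op\io_0=\io_0$ one gets $0\psi=(0\psi)^2$, and $0\psi=0$ is excluded because then every $n\psi=0$, contradicting faithfulness on objects), and $n\psi = d^n$ where $d := 1\psi$. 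In the $d$-twisted cases the same argument applies, since the identities $(0,\io_n)$ tensor correctly (as $0+0\leq d$).

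It then remains to show $d\geq2$. Here I will use faithfulness. If $d=0$ then $n\psi=0$ for all $n\geq1$, so $\psi$ is not injective on objects; and $d=1$ gives $n\psi=1$ for all $n$, which is likewise not injective on objects. Under the paper's definition of faithfulness (injective on objects), this already suffices.

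To keep the argument robust in case one only wants injectivity on morphisms, I will also give a morphism-level argument. If $d=1$, every hom-set is mapped into the commutative monoid $\M_1(K)=K$, where the transpose is trivial. Then $\psi$ restricted to $\C_2$ (or to $\C_{1,1}$) would embed a monoid into $(K,\cdot)$ compatibly with the involution, so every $a$ would satisfy $a\psi=a^*\psi$, forcing $a=a^*$. I therefore just need some non-self-adjoint morphism in each $\C$. In $\TL$ (and hence in $\B$ and $\P$), $\TL_{2,0}$ contains the cap $\cup$, whose adjoint lies in $\TL_{0,2}\ne\TL_{2,0}$; since both are mapped to $1\times1$ matrices, faithfulness is violated. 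For the twisted versions I will use $(0,\cup)$ and $(0,\cup^*)$. If $d=0$, then all hom-sets with $m$ or $n$ positive map to a single empty matrix, yet $\TL_{2,0}$ and $\TL_{0,2}$ are distinct morphisms, again contradicting injectivity.

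I expect no real obstacle. The only care needed is the degenerate value $0\psi$ and correctly handling the zero morphisms in the $d$-twisted categories, whose identities are nonetheless of the form $(0,\io_n)$, so the homomorphism argument goes through unchanged.
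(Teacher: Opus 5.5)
Your first step matches the paper's: the tensor axioms applied to identities make $n\mapsto n\psi$ a monoid morphism $(\NN,+)\to(\NN,\cdot)$, so $n\psi=d^n$ with $d=1\psi$. You are also more careful than the paper in forcing $0\psi=1$.

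You diverge in how you exclude $d\in\{0,1\}$. You observe that in both cases $1\psi=2\psi$, which contradicts injectivity on objects, and the paper's definition of faithfulness includes that. This is valid and shorter, and it shows that the involution hypothesis plays no role. Indeed, bare injectivity on morphisms already suffices, since $\io_1\psi=I_d=I_{d^2}=\io_2\psi$ when $d\in\{0,1\}$.

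The paper instead rules out $d=0$ by the collapse of the image to degenerate matrices, and $d=1$ via the involution: $1\times1$ matrices are self-transpose, so $a\psi=a^*\psi$ for all $a$. That route survives under the standard category-theoretic reading of ``faithful'' (injective on each hom-set, with no condition on objects), where your main argument is unavailable. Under that reading one needs:
\begin{itemize}
\item for $d=1$, a non-self-adjoint \emph{endomorphism}, e.g.\ in $\P_2$, $\B_3$ or $\TL_3$;
\item for $d=0$, a hom-set $\C_{m,n}$ with $m,n\geq1$ and at least two elements, e.g.\ $\TL_2$.
\end{itemize}

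Your supplementary morphism-level argument has errors, though none affects the main proof.
\begin{itemize}
\item For $d=0$ you have $0\psi=1$ and $2\psi=0$. So $\TL_{2,0}$ maps to the $0\times1$ empty matrix and $\TL_{0,2}$ to the $1\times0$ empty matrix. These are distinct morphisms of $\M$, so your claimed collision does not occur. It is also false that all hom-sets with $m$ or $n$ positive go to a single empty matrix.
\item For $d=1$ your pair lies in different hom-sets. It therefore only helps under global injectivity on morphisms, where the identity collision above is already enough.
\item $K$ is not assumed commutative. This is harmless, since you only use that transposition is trivial on $1\times1$ matrices.
\end{itemize}
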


\begin{proof}
Since $\psi$ is a tensor category representation, it acts on objects as a monoid morphism $(\NN,+)\to(\NN,\cdot)$.  It follows that with $d = 1\psi$, we have $n\psi = d^n$ for all $n\in\NN$, and it remains to check that $d\geq2$.  Now, $\psi$ maps each non-empty hom-set $\C_{m,n}$ into~$\M_{d^m,d^n}$.  Thus, we cannot have $d=0$, or else the image of $\psi$ would consist of only the $0\times0$ empty matrix, contradicting the faithfulness of $\psi$.  We also cannot have $d=1$, or else every matrix in the image of $\psi$ would be equal to its own transpose, and so then we would have $a\psi = (a\psi)^T = a^*\psi$ for all $a\in\C$, which would again contradict faithfulness.
\end{proof}

It turns out, however, that one can find smaller-dimensional (involutive) representations of some of our (twisted) diagram categories if one does not insist that they are \emph{tensor} representations.  This is the theme of Sections \ref{sect:Bn} and \ref{sect:TLn}, which resprectively treat the (twisted) Brauer and Temperley--Lieb categories.  But before this, we show in the next section that there exist faithful involutive tensor category representations of our ($d$-)twisted diagram categories of the minimal dimensions stated in Theorem \ref{thm:min}. See Section~\ref{sect:conclusion} below for some open problems regarding dimensions of representations.

\section{Twisted partition categories}\label{sect:twisted}

The central idea of the previous section can be adapted to give faithful representations of the \emph{twisted} and \emph{$d$-twisted} partition categories, by changing the semiring we work over and introducing appropriate scalar factors.

\subsection{Representations of twisted partition categories}\label{subsect:twistedP}

Here is the twisted analogue of Theorem \ref{thm:partition}:

\begin{thm}
\label{thm:twistpartition}
Let $K$ be a semiring of characteristic $0$. Then
\[
\rho = \rho_K:\P^\Phi \rightarrow \M(K):(i,a) \mapsto 2^i \bar{a}
\]
is a faithful involutive tensor category representation. In particular, for each $n \in \mathbb{N}$ the map~$\rho$ gives a faithful involutive representation of the monoid $\P_n^\Phi$ by $2^n \times 2^n$ matrices over~$K$.
\end{thm}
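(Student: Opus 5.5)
We have to check four things: that $\rho$ is a functor (identities and composition), that it preserves the involution and the tensor operation, and that it is injective on objects and on morphisms. On objects $\rho$ acts as $n\mapsto 2^n$, exactly like $\varphi$, so it is injective on objects. The other properties come from Proposition~\ref{prop:vp} together with Lemmas~\ref{lem:prod} and~\ref{lem:intertwine}. The one new ingredient is that the factor $2^{\Phi(a,b)}$ counting intertwining sets is no longer collapsed to $1$; instead it is absorbed exactly by the scalar factors.

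For composition, take $(i,a)\in\P^\Phi_{m,n}$ and $(j,b)\in\P^\Phi_{n,t}$. Since scalar multiples of the identity by natural numbers commute with all matrices, we have $(2^i\ol a)(2^j\ol b)=2^{i+j}\,\ol a\,\ol b$. By Lemma~\ref{lem:prod}, $(\ol a\,\ol b)_{X,Y}$ is the sum of $k$ copies of $1$, where $k$ is the number of intertwining sets. By Lemma~\ref{lem:intertwine}, $k=2^{\Phi(a,b)}$ when $\ol{ab}_{X,Y}=1$ and $k=0$ otherwise. Hence $\ol a\,\ol b=2^{\Phi(a,b)}\ol{ab}$, and
\[
(2^i\ol a)(2^j\ol b)=2^{i+j+\Phi(a,b)}\ol{ab}=\bigl((i,a)(j,b)\bigr)\rho .
\]
The remaining structural properties follow directly from Proposition~\ref{prop:vp}:
\begin{itemize}
\item identities: $(0,\io_n)\rho=\ol{\io_n}=I_{2^n}$;
\item involution: $((i,a)^*)\rho=2^i\ol{a^*}=(2^i\ol a)^T$;
\item tensor: $((i,a)\op(j,b))\rho=2^{i+j}\ol{a\op b}=(2^i\ol a)\ot(2^j\ol b)$, by bilinearity of the Kronecker product with respect to natural-number scalars (each entry of $A\ot B$ is a product of entries).
\end{itemize}

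The step needing care is faithfulness, and this is where characteristic $0$ enters. Suppose $2^i\ol a=2^j\ol b$ with $a\in\P_{m,n}$ and $b\in\P_{m',n'}$. Comparing dimensions gives $m=m'$ and $n=n'$. The entry in position $(\es,[n])$ is not obviously nonzero, so instead I use the identity $\ol a_{X,Y}=1$ whenever $X\cup Y'=[m]\cup[n]'$, which holds because the whole vertex set is a union of $a$-blocks. So the $([m],[n])$ entry of both matrices is $1$, giving $2^i=2^j$ as elements of $K$. In characteristic $0$ the elements $1,1+1,\ldots$ are pairwise distinct: $1$ generates an infinite, and hence free, cyclic subsemigroup. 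Therefore $i=j$. Then the set of positions where the entry of $2^i\ol a$ is nonzero equals the support of $\ol a$, since $2^i\neq0$ in characteristic $0$. Consequently $\ol a$ and $\ol b$ have the same zero-one pattern, and Proposition~\ref{prop:vp}\ref{vp1} gives $a=b$.

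The final statement about $\P_n^\Phi$ is then immediate, since $\rho$ maps $\P^\Phi_{n,n}$ into $\M_{2^n}(K)$. I expect the main subtlety to be the faithfulness argument: one must make sure that $2^i\ne2^j$ for $i\neq j$ and that $2^i\ne0$ in an arbitrary characteristic-$0$ semiring. Both follow from the infinitude of the monogenic subsemigroup generated by $1$, because such a subsemigroup is isomorphic to $(\NN\setminus\{0\},+)$, so $0$ (the additive identity) never equals a positive multiple of $1$.
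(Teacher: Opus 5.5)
Your proposal is correct and follows the paper's proof essentially step for step. Composition comes from Lemmas~\ref{lem:prod} and~\ref{lem:intertwine} via $\ol a\,\ol b=2^{\Phi(a,b)}\ol{ab}$, the identity, involution and tensor properties come from Proposition~\ref{prop:vp}, and injectivity comes from reading off $i$ from the nonzero entries and then $a$ from the support of $\ol a$; your explicit use of the $([m],[n])$ entry and of the freeness of the subsemigroup generated by $1$ (giving $2^i\neq 0$ and $2^i\neq 2^j$ for $i\neq j$) just spells out what the paper leaves implicit.
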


\begin{proof}
Consider a composable pair $(i,a),(j,b) \in \P^\Phi$, say with $a\in\P_{m,n}$ and $b\in\P_{n,t}$, and let $X\sub[m]$ and $Y \subseteq [t]$. Then by Lemmas \ref{lem:prod} and \ref{lem:intertwine} we have
\begin{align*}
((i,a)\rho \, (j,b)\rho)_{X,Y} &= (2^i\overline{a} \, 2^j\overline{b})_{X,Y} \\
&= 2^{i+j}(\overline{a} \, \overline{b})_{X,Y} \\
&= 2^{i+j}\sum_{Z \subseteq [n]} \overline{a}_{X,Z}\overline{b}_{Z,Y}\\
&= 2^{i+j}
\begin{cases} 2^{\Phi(a,b)} & \hspace{2.2mm}\mbox{ if } X \cup Y' \mbox{ is a union of } ab\mbox{-blocks,}\\
0 & \hspace{2.2mm}\mbox{ else}\end{cases}\\
&= 2^{i+j+\Phi(a,b)}
\begin{cases} 1 &                 \mbox{ if } X \cup Y' \mbox{ is a union of } ab\mbox{-blocks,}\\
0 & \mbox{ else}\end{cases}\\
&= 2^{i+j+\Phi(a,b)} (\overline{ab})_{X,Y} = (((i,a)(j,b))\rho)_{X,Y}.
\end{align*}
Thus $\rho$ preserves composition.  It is immediate from Proposition~\ref{prop:vp} that $\rho$ maps each identity $(0,\io_n)$ of $\P^\Phi$ to the identity matrix $I_{2^n}$, so $\rho$ is indeed a category representation.  It is also easy to see that $\rho$ is involutive.  

To see that $\rho$ is a tensor representation, let $i,j\in\NN$ and $a,b\in\P$ be arbitrary.  Then by the definitions, and using Proposition \ref{prop:vp}\ref{vp4}, we have
\begin{align*}
\big((i,a)\op(j,b)\big)\rho = (i+j,a\op b)\rho = 2^{i+j}\ol{a\op b} &= 2^{i+j}(\ol a\ot\ol b) \\&= 2^i\ol a \ot 2^j\ol b = (i,a)\rho\ot(j,b)\rho.
\end{align*}

Finally, to see that $\rho$ is injective, it suffices as usual to observe that $(i,a)\in\P^\Phi$ can be recovered from $2^i\ol a$.  Indeed, since this matrix contains non-zero entries, all of which are equal to $2^i$, we can recover $i$ (as $K$ has characteristic $0$).  The matrix $\ol a$ can also be recovered by changing every non-zero entry to $1$, and we can then recover $a$ itself (as $a\mt\ol a$ is injective).
\end{proof}

\begin{rem}\label{rem:ZxPn}
As in Remark \ref{rem:Z}, the set $\mathbb{Z} \times \P$ can be viewed as a twisted partition category, with respect to the same multiplication. If $2$ has a multiplicative inverse in the characteristic-$0$ semiring $K$, notice that the argument used in the proof of the previous theorem extends to give a faithful representation of $\mathbb{Z} \times \P$ also.
\end{rem}

In the following, we write $O_{k,l} \in \M_{k,l}(K)$ for the $k\times l$ zero matrix.

\begin{thm}
\label{thm:twistdpartition}
Let $d$ be a natural number, and let $K$ be a ring of characteristic $2^{d+1}$. Then 
\[
\rho^d=\rho^d_K:\P^{\Phi,d} \rightarrow \M(K):
\begin{cases}
(i,a) \mapsto 2^i \bar{a},\\
\bz_{m,n} \mt O_{2^m,2^n}
\end{cases}
\]
is a faithful involutive tensor category representation.  In particular, for each $n \in \mathbb{N}$ the map~$\rho^d$ gives a faithful involutive representation of the monoid $\P_n^{\Phi,d}$ by $2^n \times 2^n$ matrices over~$K$.
\end{thm}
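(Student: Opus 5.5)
The argument follows the proof of Theorem~\ref{thm:twistpartition}, with the characteristic $2^{d+1}$ used to make all integer coordinates exceeding $d$ produce zero matrices. The basic arithmetic fact is this: in $K$, $2^k=0$ exactly when $k\geq d+1$, while for $0\leq k\leq d$ the elements $2^k$ are nonzero and pairwise distinct. Indeed, the integers $2^0,\ldots,2^d$ are distinct residues mod $2^{d+1}$, none of them zero, and $2^k\equiv0$ for $k\geq d+1$. Since $K$ has characteristic exactly $2^{d+1}$, the integer $N$ is zero in $K$ if and only if $2^{d+1}\mid N$. This gives both claims.

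\emph{Composition.} Take composable $(i,a)\in\P^{\Phi,d}_{m,n}$ and $(j,b)\in\P^{\Phi,d}_{n,t}$. The computation in the proof of Theorem~\ref{thm:twistpartition}, which uses Lemmas~\ref{lem:prod} and~\ref{lem:intertwine}, holds over any semiring and gives
\[
(2^i\ol a)(2^j\ol b)=2^{i+j+\Phi(a,b)}\,\ol{ab}.
\]
If $i+j+\Phi(a,b)\leq d$, this is $((i,a)(j,b))\rho^d$. Otherwise the scalar vanishes, and we obtain $O_{2^m,2^t}=\bz_{m,t}\rho^d$, as required. Products involving some $\bz$ are zero matrices on both sides.

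\emph{Identities, involution and tensor.} Identities $(0,\io_n)$ map to $I_{2^n}$ by Proposition~\ref{prop:vp}\ref{vp2}. The involution is preserved by Proposition~\ref{prop:vp}\ref{vp3}, together with the fact that transposes of zero matrices are zero matrices. For tensors, $2^i\ol a\ot2^j\ol b=2^{i+j}(\ol a\ot\ol b)=2^{i+j}\ol{a\op b}$ by Proposition~\ref{prop:vp}\ref{vp4}. This equals $(i+j,a\op b)\rho^d$ when $i+j\leq d$, and equals the zero matrix $\bz_{m+s,n+t}\rho^d$ otherwise. Tensoring a zero matrix with any matrix gives a zero matrix of the right dimensions.

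\emph{Faithfulness.} The representation is injective on objects since $n\mapsto2^n$ is injective. On a hom-set $\P^{\Phi,d}_{m,n}$, a nonzero matrix $2^i\ol a$ with $i\leq d$ has all its nonzero entries equal to $2^i$. Moreover $\ol a$ always has a nonzero entry, since $\ol a_{[m],[n]}=1$, so $2^i\ol a$ is never the zero matrix and hence never equals the image of $\bz_{m,n}$. Because the values $2^0,\ldots,2^d$ are distinct in $K$, we recover $i$ from the entry value. The zero-one pattern of $2^i\ol a$ is exactly $\ol a$, since $2^i\neq0$, and $a$ is then recovered by Proposition~\ref{prop:vp}\ref{vp1}.

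The only step that needs care is the arithmetic fact at the start: the exact characteristic is what guarantees both vanishing for $k>d$ and distinctness and non-vanishing for $k\leq d$. Everything else is routine bookkeeping of the zero morphisms.
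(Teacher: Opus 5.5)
Your proof is correct and follows essentially the same route as the paper. You reuse the intertwining-set computation from Theorem~\ref{thm:twistpartition} to get $(2^i\ol a)(2^j\ol b)=2^{i+j+\Phi(a,b)}\ol{ab}$, let characteristic $2^{d+1}$ kill the scalar when the exponent exceeds $d$, and recover $(i,a)$ from the facts that $1,2,\ldots,2^d$ are distinct and nonzero in $K$ (you are slightly more explicit than the paper, e.g.\ in noting $\ol a_{[m],[n]}=1$, so $2^i\ol a$ never equals the image of $\bz_{m,n}$).
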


\begin{proof}
Let $i,j\in\{0,1,\ldots,d\}$, and $a\in\P_{m,n}$ and $b\in\P_{n,t}$.  Then for any $X\sub[m]$ and $Y\sub[t]$, we argue exactly as in the proof of Theorem~\ref{thm:twistpartition} to obtain:
\begin{align*}
((i,a)\rho^d \, (j,b)\rho^d)_{X,Y} 
&= \begin{cases}
2^{i+j+\Phi(a,b)} (\overline{ab})_{X,Y} & \mbox{ if } i+j+\Phi(a,b) \leq d,\\
0 & \mbox{ else}
\end{cases}\\
&= (((i,a)(j,b))\rho^d)_{X,Y}.
\end{align*}
It is also clear that $\rho^d$ respects identity elements $(0,\io_n)$, and products involving zero elements~$\bz_{m,n}$ of $\P^{\Phi,d}$.
Thus $\rho^d$ is a morphism.  It is straightforward to see that $\rho^d$ respects the involution and tensor operations.  Injectivity is established exactly as in the proof of Theorem~\ref{thm:twistpartition}, keeping in mind that the elements $1,2,2^2,\ldots,2^d$ are all distinct in $K$.
\end{proof}

\subsection{A module decomposition over the twisted partition category}

For the rest of this section we fix a \emph{field} $K$ of characteristic~$0$. To study decompositions of our category representations, we need to consider the appropriate linear structure on which our representation induces an action, analogous to the way in which a monoid representation over a field induces an action on a vector space. It transpires that the correct structure is a category whose only morphisms are endomorphisms, and the local submonoids of which are vector spaces in which the composition operation is vector addition. More precisely, let $\V$ be the category whose object set is $\NN$, whose only morphisms are endomorphisms, and where for each $n\in\NN$ the endomorphism set $\V_n = \V_{n,n}$ is the vector space $K^{2^n}$ (regarded as column vectors indexed by subsets of $[n]$), with composition being vector addition. By Theorem~\ref{thm:twistpartition}, there is an obvious sense in which $\P^\Phi$ `acts' (partially) on $\V$ by:
\[
(i,a) \cdot v = 2^i\overline{a}v \qquad\text{for $i\in\NN$, $a\in\P$ and $v\in\V_{\br(a)}$.}
\]
This action gives $\V$ the structure of a $\P^\Phi$-\textit{module}, sometimes called an \textit{actegory}. See \cite{CG2024} for background theory of these structures; we note that they behave generally analogously to modules over a monoid algebra, and in particular that they admit an obvious notion of a \textit{submodule}. 
Note that $\P_{m,n}^\Phi\cdot\V_n \sub \V_m$ for all $m,n\in\NN$.

For $n\in\NN$ we write $\{v_{n,X}: X \subseteq [n]\}$ for the standard basis of $\V_n$, and define two subspaces of $\V_n$:
\[
\V_n^+: = {\rm span}_K\{v_{n,X} + v_{n,X^c}: X \subseteq [n]\} 
\ANd
\V_n^-: = {\rm span}_K\{v_{n,X} - v_{n,X^c}: X \subseteq [n]\},
\]
where here for $X \subseteq [n]$ we write $X^c = [n]\setminus X$ for the complement of $X$ in $[n]$.  
It is easy to see that $\V_n=\V_n^+\op\V_n^-$ as vector spaces.
Observe that $\V_0^+ = \V_0$, and $\V_0^- = \{0\}$, while for $n\geq1$ the spaces $\V_n^+$ and $\V_n^-$ both have dimension~$2^{n-1}$.
These collections of subspaces give rise to two induced subcategories~$\V^+$ and~$\V^-$ of $\V$.

In the next result, we write $\la v\ra$ for the $\P^\Phi$-submodule of $\V$ generated by a vector~${v\in\V}$.

\begin{thm}
\label{thm:decomp}
Let $K$ be a field of characteristic $0$. Then:
\ben
    \item \label{dc1} $\V^+$ and $\V^-$ are $\P^\Phi$-submodules of $\V$,
    \item \label{dc2} $\V^+ = \la v_{m,[m]}+v_{m,\es}\ra$ for any $m\geq0$,
    \item \label{dc3} $\V^- = \la v_{m,[m]}-v_{m,\es}\ra$ for any $m\geq1$,
    \item \label{dc4} if $\U$ is a non-zero $\P^\Phi$-submodule of $\V$, then $\U$ contains either $\V^+$ or $\V^-$.
\een
Consequently, $\V^+$ and $\V^-$ are irreducible $\P^\Phi$-modules and $\V = \V^+ \oplus \V^-$ as $\P^\Phi$-modules.
\end{thm}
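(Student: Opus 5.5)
The whole proof rests on one symmetry of the matrices $\ol a$, namely their behaviour under complementation. For $n\in\NN$, let $J_n\in\M_{2^n}(K)$ be the permutation matrix of the involution $X\mt X^c$ of $2^{[n]}$. Take $a\in\P_{m,n}$ and $X\sub[m]$, $Y\sub[n]$. Then $X\cup Y'$ is a union of $a$-blocks if and only if its complement $X^c\cup (Y^c)'$ in $[m]\cup[n]'$ is. Hence $\ol a_{X,Y}=\ol a_{X^c,Y^c}$, which says exactly that $J_m\ol a=\ol aJ_n$. Now $\V_n^+$ and $\V_n^-$ are the $(+1)$- and $(-1)$-eigenspaces of $J_n$ (the characteristic is $0$, so $2$ is invertible). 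So if $J_nv=\pm v$, then $J_m(2^i\ol av)=2^i\ol aJ_nv=\pm 2^i\ol a v$, and \ref{dc1} follows.

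For \ref{dc2} and \ref{dc3}, part \ref{dc1} already gives that $\la v_{m,[m]}\pm v_{m,\es}\ra$ lies in $\V^\pm$. So it suffices to show that every spanning vector $v_{n,X}\pm v_{n,X^c}$ lies in the generated submodule. I use that submodules are subspaces, so linear combinations are allowed.

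First, for $m\geq1$, I reduce to $m=1$. Let $c\in\P_{1,m}$ have the single block $\{1\}\cup[m]'$. Then $\ol c$ has exactly two nonzero entries, at $(\{1\},[m])$ and $(\es,\es)$, so $\ol c(v_{m,[m]}\pm v_{m,\es})=v_{1,\{1\}}\pm v_{1,\es}$.

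Next, for $X\sub[n]$ with $X\ne\es,[n]$, let $a_X\in\P_{n,1}$ have blocks $X\cup\{1'\}$ and $X^c$, and let $a_{[n]}\in\P_{n,1}$ have the single block $[n]\cup\{1'\}$. Column $\{1\}$ of $\ol{a_X}$ is $v_X+v_{[n]}$, and column $\es$ is $v_\es+v_{X^c}$. Hence
\[
\ol{a_X}(v_{1,\{1\}}\pm v_{1,\es})=(v_{n,X}\pm v_{n,X^c})+(v_{n,[n]}\pm v_{n,\es}),
\qquad
\ol{a_{[n]}}(v_{1,\{1\}}\pm v_{1,\es})=v_{n,[n]}\pm v_{n,\es}.
\]
Subtracting the second from the first gives $v_{n,X}\pm v_{n,X^c}$. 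The case $m=0$ of \ref{dc2} is handled the same way, with the generator $2v_{0,\es}$, using the partitions in $\P_{n,0}$ with blocks $\{X,X^c\}$ and $\{[n]\}$.

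For \ref{dc4}, let $0\ne u\in\U_n$, and choose $Y$ inclusion-maximal among the subsets $T\sub[n]$ with $u_T\ne0$. Let $b\in\P_{1,n}$ have the transversal $\{1\}\cup Y'$, with every element of $(Y^c)'$ a singleton lower block. By Lemma \ref{lem:ones}, row $\{1\}$ of $\ol b$ is supported on the sets $T\supseteq Y$, and row $\es$ on the sets $T\sub Y^c$. Therefore
\[
\ol b u=\beta v_{1,\{1\}}+\alpha v_{1,\es},\qquad \beta=\sum_{T\supseteq Y}u_T=u_Y\neq0,
\]
by the maximality of $Y$. Next let $e\in\P_1$ have blocks $\{1\}$ and $\{1'\}$, so $\ol e$ is the all-ones $2\times2$ matrix and $\ol e(\ol bu)=(\alpha+\beta)(v_{1,\{1\}}+v_{1,\es})$. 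If $\alpha+\beta\ne0$, then $\U$ contains $v_{1,\{1\}}+v_{1,\es}$, hence $\V^+$ by \ref{dc2}. Otherwise $\ol bu=\beta(v_{1,\{1\}}-v_{1,\es})$ with $\beta\ne0$, so $\U\supseteq\V^-$ by \ref{dc3}.

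The consequences then follow. $\V_n=\V_n^+\op\V_n^-$ for each $n$, and both summands are submodules by \ref{dc1}, so $\V=\V^+\op\V^-$ as modules. For irreducibility, a nonzero submodule $\U$ of $\V^+$ contains $\V^+$ or $\V^-$ by \ref{dc4}. It cannot contain $\V^-$, since $\V^+\cap\V^-=0$ and $\V^-\ne0$, so $\U\supseteq\V^+$; the same argument works for $\V^-$. The step I expect to be most delicate is \ref{dc4}: one has to pick test partitions whose matrices isolate a nonzero coefficient of an arbitrary $u$, and the inclusion-maximal choice of $Y$ together with Lemma \ref{lem:ones} is what makes this work.
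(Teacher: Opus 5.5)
Your proof is correct. In parts \ref{dc1}--\ref{dc3} it is essentially the paper's argument in different packaging. The paper also rests on the complement symmetry $\ol a_{X,Y}=\ol a_{X^c,Y^c}$, but it checks \ref{dc1} by an explicit case analysis of coefficients rather than through $J_m\ol a=\ol a\,J_n$. In \ref{dc3}, your detour through the object $1$ composes to exactly the paper's partition $d\in\P_{n,m}$, which has transversal $X\cup[m]'$ and upper block $X^c$. For \ref{dc2} the paper instead uses partitions with upper blocks $X,X^c$ (or $[n]$) and lower block $[m]'$; these cover $m=0$ and $m\geq1$ at once, so no separate $m=0$ case is needed.

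The real difference is in \ref{dc4}. The paper splits according to whether the $\V_n^+$-component of $v$ is nonzero, i.e.\ whether some $\lambda_X+\lambda_{X^c}\neq0$. It then uses four explicit elements of $\P_n$ to extract a nonzero multiple of $v_{n,[n]}+v_{n,\es}$ or of $v_{n,[n]}-v_{n,\es}$. You instead take $Y$ maximal in the support of $u$ and collapse $u$ into the two-dimensional space $\V_1$. There the dichotomy between $\alpha+\beta\neq0$ and $\alpha+\beta=0$ settles everything.

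Your version is shorter and avoids bookkeeping over arbitrary $X$. However, it uses the hom-sets $\P_{1,n}$, $\P_{n,1}$ and $\P_1$ in an essential way. The paper's proof, restricted to $m=n$, uses only elements of $\P_n$, so it yields the monoid version, Theorem \ref{thm:decomp_mon}, for free. Your argument adapts to that setting by composing through the object $1$. For example, replace $b$ by $c^*b\in\P_n$, where $c^*\in\P_{n,1}$ is the partition with the single block $[n]\cup\{1'\}$. Replace $e$ by the element of $\P_n$ with blocks $[n]$ and $[n]'$. For the category statement asked here, no such adaptation is needed.
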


\begin{proof}
\ref{dc1} Let $\al = (i,a) \in \P_{m,n}^\Phi$.  We must show that
\begin{equation}\label{eq:adot}
\al\cdot(v_{n,Y}+v_{n,Y^c}) \in \V_m^+ \AND \al\cdot(v_{n,Y}-v_{n,Y^c}) \in \V_m^- \qquad\text{for all $Y\sub[n]$.}
\end{equation}
To simplify notation in what follows, we use the abbreviation $v_Y = v_{n,Y}$ for $Y\sub[n]$.  Also, for $w\in \V_m$ and $X\subseteq[m]$, we denote by $[w]_X$ the coefficient of $v_{m,X}$ when $w$ is written as a linear combination of the basis elements of $\V_m$.  Thus, we can prove \eqref{eq:adot} by showing that
\begin{align}
\nonumber [\alpha \cdot (v_Y + v_{Y^c})]_X = [\alpha \cdot (v_Y + v_{Y^c})]_{X^c} \ANd &[\alpha \cdot (v_Y - v_{Y^c})]_X = -[\alpha \cdot (v_Y - v_{Y^c})]_{X^c}\\
\label{eq:aldot} &\qquad\text{for all $X\sub[m]$ and $Y\sub[n]$.}
\end{align}
(Note that here $Y^c=[n]\sm Y$ and $X^c=[m]\sm X$.)  To prove \eqref{eq:aldot}, fix $X\sub[m]$ and $Y\sub[n]$.  For the calculations to follow, observe that $\ol av_Y$ is column $Y$ of $\ol a$, and so $[\ol av_Y]_X = \ol a_{X,Y}$.

Since $X \cup Y'$ is a union of $a$-blocks if and only if $X^c\cup (Y^c)'$ is a union of $a$-blocks, we have
\begin{align*}
[\alpha \cdot{}& (v_Y + v_{Y^c})]_X = 2^i(\overline{a}_{X,Y} + \overline{a}_{X, Y^c})\\[1mm]
&=\begin{cases}
2^{i+1} & \mbox{ if } X \cup Z' \mbox{ is a union of }a \mbox {-blocks for both } Z=Y \mbox{ and } Z=Y^c,\\
2^i & \mbox{ if } X \cup Z' \mbox{ is a union of }a \mbox {-blocks for exactly one of } Z=Y \mbox{ or } Z=Y^c,\\
0 & \mbox{ else }
\end{cases}\\[1mm]
&=\begin{cases}
2^{i+1} & \mbox{ if } X^c \cup (Z^c)' \mbox{ is a union of }a \mbox {-blocks for both } Z=Y \mbox{ and } Z=Y^c,\\
2^i & \mbox{ if } X^c \cup (Z^c)' \mbox{ is a union of }a \mbox {-blocks for exactly one of } Z=Y \mbox{ or } Z=Y^c,\\
0 & \mbox{ else }
\end{cases}\\[1mm]
&= 2^i(\overline{a}_{X^c,Y} + \overline{a}_{X^c, Y^c}) = [\alpha \cdot (v_Y + v_{Y^c})]_{X^c},
\intertext{establishing the first equality in \eqref{eq:aldot}.  For the second, we have}
[\alpha \cdot{}& (v_Y - v_{Y^c})]_X = 2^i(\overline{a}_{X,Y} - \overline{a}_{X, Y^c})\\[1mm]
&=\begin{cases}
\mathrlap{2^i}{\phantom{2^{i+1}}}  & \mbox{ if } X \cup Z' \mbox{ is a union of }a \mbox {-blocks for } Z=Y \mbox{ but not } Z=Y^c,\\
-2^i & \mbox{ if } X \cup Z' \mbox{ is a union of }a \mbox {-blocks for } Z=Y^c \mbox{ but not } Z=Y,\\
0 & \mbox{ else }
\end{cases}\\[1mm]
&=\begin{cases}
\mathrlap{2^i}{\phantom{2^{i+1}}}& \mbox{ if } X^c \cup (Z^c)' \mbox{ is a union of }a \mbox {-blocks for } Z=Y \mbox{ but not } Z=Y^c,\\
-2^i & \mbox{ if } X^c \cup (Z^c)' \mbox{ is a union of }a \mbox {-blocks for } Z=Y^c \mbox{ but not } Z=Y,\\
0 & \mbox{ else }
\end{cases}\\[1mm]
&= 2^i(\overline{a}_{X^c,Y^c} - \overline{a}_{X^c, Y}) = -[\alpha \cdot (v_Y - v_{Y^c})]_{X^c}.
\end{align*}

\bigskip

\ref{dc2}  Fix $m\geq0$, and write $w = v_{m,[m]} + v_{m,\es}$, noting that when $m=0$ we have $w = 2v_{0,\es}$.  Since $w \in \V^+$, we just need to show that
\begin{equation}
\label{eq:dc21} v_{n,X}+v_{n,X^c} \in \la w \ra \qquad\text{for all $n\geq0$ and $X\sub[n]$.}\\
\end{equation}
To do so, fix $n\geq0$ and $X\sub[n]$, and again write $v_Y = v_{n,Y}$ for $Y\sub[n]$.  Let ${a = \begin{partn}{1}[n]\ \\\hline [m]\ \end{partn}\in\P_{n,m}}$.  (Here we slightly abuse notation; for example, if $m=0<n$, we interpret $a = \begin{partn}{1}[n]\ \\\hline \ \end{partn}\in\P_{n,0}$.)  Considering separate cases according to whether or not $m$ and/or $n$ is $0$, one can check that
\[
(0,a)\cdot w = \ol aw = \begin{cases}
2(v_{[n]}+v_\es) &\text{if $n\geq1$,}\\
2v_\es=v_{[n]}+v_\es &\text{if $n=0$.}
\end{cases}
\]
In all cases, it follows that
\begin{equation}\label{eq:dc22}
v_{[n]}+v_\es \in \la w\ra.
\end{equation}
This establishes \eqref{eq:dc21} in the case that $X=[n]$ or $X=\es$.  Now suppose $\es\subset X\subset[n]$, and let $b = \begin{partn}{2}
X & X^c \\ \hhline{-|-}
\multicolumn{2}{c}{ [m] }
\end{partn}\in\P_{n,m}$ (with similar abuse of notation when $m=0$).  We then have
\[
(0,b)\cdot w = \ol bw = 2(v_{[n]} + v_\es + v_X + v_{X^c}),
\]
which gives $(v_{[n]} + v_\es) + (v_X + v_{X^c}) \in \la w\ra$.  Combined with \eqref{eq:dc22}, it follows that ${v_X+v_{X^c} \in \la w\ra}$, completing the proof of \eqref{eq:dc21}.

\bigskip

\ref{dc3}  Fix $m\geq1$, and this time write $w = v_{m,[m]} - v_{m,\es}$.  Since $w \in \V^-$, we just need to show that
\begin{equation}
\label{eq:dc31} v_{n,X}-v_{n,X^c} \in \la w \ra \qquad\text{for all $n\geq1$ and $X\sub[n]$.}\\
\end{equation}
To do so, fix $n\geq1$ and $X\sub[n]$, and again write $v_Y = v_{n,Y}$ for $Y\sub[n]$.  With ${c = \begin{partn}{1}\bn\ \\ \bm\ \end{partn}\in\P_{n,m}}$, we have
\[
v_{[n]}-v_\es = \ol cw = (0,c)\cdot w \in \la w\ra,
\]
which gives \eqref{eq:dc31} in the case that $X=[n]$ or $X=\es$.  Now suppose $\es\subset X\subset[n]$, and let ${d = \begin{partn}{2}
X & X^c \\ \hhline{~|-}
[m] & \ 
\end{partn}\in\P_{n,m}}$.  We then have
\[
(v_{[n]} - v_\es) + (v_X - v_{X^c}) = \ol dw = (0,d) \cdot w \in \la w\ra,
\]
and again it follows that $v_X - v_{X^c} \in \la w\ra$, completing the proof of \eqref{eq:dc31}.

\bigskip

\ref{dc4} Suppose that $v \in \U$ is non-zero, say with $v\in\V_n$.  If $n=0$, then $v=\lam v_{0,\es}$ for some $\lam\in K\sm\{0\}$, and so $v_{0,\es}\in\U$; noting that $v_{n,[n]}+v_{n,\es} = 2v_{0,\es}$ in this case, it follows from part \ref{dc2} that $\V^+\sub\U$.  
Thus, for the rest of the proof we assume that $n\geq1$, and we write $v = \sum_{X \subseteq [n]} \lambda_X v_X$, where each $\lam_X\in K$, and where again $v_X=v_{n,X}$ for $X\sub[n]$.
By parts \ref{dc2} and \ref{dc3}, it suffices to show that $\U$ contains at least one of $v_{[n]}+v_\es$ or $v_{[n]} - v_{\es}$.  We consider two cases.

\medskip
Case 1: First suppose there exists $X\sub[n]$ such that $\lam_X+\lam_{X^c}\not=0$.  We show that for this $X$ we have
\begin{equation}\label{eq:dc41}
(\lam_X+\lam_{X^c})(v_{[n]}+v_\es)\in\U,
\end{equation}
which gives $v_{[n]}+v_\es\in\U$, as desired.  Working towards \eqref{eq:dc41}, we first define ${a = \begin{partn}{1} [n]\ \\\hline [n]\ \end{partn}\in\P_n}$, and note that
\[
(\lam_{[n]}+\lam_\es)(v_{[n]}+v_\es) = \ol av = (0,a)\cdot v \in \U.
\]
This establishes \eqref{eq:dc41} if $X=[n]$ or $X=\es$.  So now suppose $\es\subset X\subset[n]$, and let ${b = \begin{partn}{2}
\multicolumn{2}{c}{[n]\ } \\ \hhline{-|-}
X & X^c
\end{partn}\in\P_n}$.  We then have
\[
(\lam_{[n]}+\lam_\es+\lam_X+\lam_{X^c})(v_{[n]}+v_\es) = \ol bv = (0,b)\cdot v \in \U.
\]
It then follows that $(\lam_X+\lam_{X^c})(v_{[n]}+v_\es) = (0,b)\cdot v - (0,a)\cdot v\in\U$, completing the proof in this case.

\medskip
Case 2: Now suppose $\lam_X+\lam_{X^c}=0$ for all $X\sub[n]$.  Since $v\not=0$ we can fix $X\sub[n]$ such that $\lam_X\not=0$.  We show that for this $X$ we have
\begin{equation}\label{eq:dc42}
\lam_X(v_{[n]} - v_{\es}) \in \U,
\end{equation}
which gives $v_{[n]} - v_{\es} \in \U$, as desired.
With $c = \begin{partn}{1} \bn\ \\ \bn\ \end{partn}\in\P_n$, we first note (keeping $\lam_{[n]}+\lam_\es=0$ in mind) that
\[
\lam_{[n]}(v_{[n]} - v_{\es}) = \lam_{[n]}v_{[n]} + \lam_\es v_{\es} = \ol c v = (0,c)\cdot v \in \U.
\]
This proves \eqref{eq:dc42} if $X=[n]$ or $X=\es$.  So now suppose $\es\subset X\subset[n]$, put ${d = \begin{partn}{2}
[n] & \  \\ \hhline{~|-}
X & X^c
\end{partn}\in\P_n}$, and note that
\[
(\lam_{[n]}+\lam_X)(v_{[n]} - v_{\es}) = (\lam_{[n]}+\lam_X)v_{[n]} + (\lam_\es+\lam_{X^c})v_\es = \ol d v = (0,d)\cdot v \in \U.
\]
This time we obtain $\lam_X(v_{[n]} - v_{\es}) = (0,d)\cdot v - (0,c)\cdot v \in \U$,
completing the proof of~\eqref{eq:dc42}, and hence of the result.
\end{proof}

We also have a monoid version of Theorem \ref{thm:decomp}.  It does not follow directly from the theorem itself, but it does follow from the proof, of which we only need the $m=n$ cases, where all partitions constructed belong to $\P_n$.  In the second part of the following statement, $\la v\ra$ denotes the $\P_n^\Phi$-submodule of $\V_n$ generated by $v\in\V_n$.

\begin{thm}
\label{thm:decomp_mon}
Let $K$ be a field of characteristic $0$, and let $n\geq0$. Then:
\ben
    \item \label{dcm1} $\V_n^+$ and $\V_n^-$ are $\P_n^\Phi$-submodules of $\V_n$,
    \item \label{dcm2} $\V_n^+ = \la v_{n,[n]}+v_{n,\es}\ra$ and $\V_n^- = \la v_{n,[n]}-v_{n,\es}\ra$,
    \item \label{dcm3} if $\U$ is a non-zero $\P_n^\Phi$-submodule of $\V_n$, then $\U$ contains either $\V_n^+$ or $\V_n^-$.
\een
Consequently, $\V_n^+$ and $\V_n^-$ are irreducible $\P_n^\Phi$-modules and $\V_n = \V_n^+ \oplus \V_n^-$ as $\P_n^\Phi$-modules.
\end{thm}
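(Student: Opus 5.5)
The plan is to rerun the proof of Theorem~\ref{thm:decomp}, restricting throughout to the case $m=n$. At each step we check that every partition used lies in $\P_n$, so that every vector produced is obtained from the given one by the action of $\P_n^\Phi$ alone. The case $n=0$ is trivial: $\V_0=\V_0^+$ is one-dimensional and $\V_0^-=\{0\}$, so (for \ref{dcm2}) we use the convention $\la 0\ra=\{0\}$. We therefore assume $n\geq1$.

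For \ref{dcm1}, the computation establishing \eqref{eq:aldot} in the proof of Theorem~\ref{thm:decomp}\ref{dc1} applies to an arbitrary $\al=(i,a)\in\P^\Phi_{m,n}$. In particular it applies to $\al\in\P_n^\Phi$. It rests only on the fact that $X\cup Y'$ is a union of $a$-blocks if and only if $X^c\cup(Y^c)'$ is. So $\V_n^\pm$ are invariant under $\P_n^\Phi$.

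For \ref{dcm2}, we take $m=n$ in the proofs of \ref{dc2} and \ref{dc3}. The partitions $a=\begin{partn}{1}[n]\ \\\hline [n]\ \end{partn}$, $b$, $c$ and $d$ constructed there lie in $\P_{n,m}=\P_n$. The displayed identities then show directly that each $v_X\pm v_{X^c}$ lies in the $\P_n^\Phi$-submodule generated by $v_{[n]}\pm v_\es$. The reverse inclusions follow from \ref{dcm1}.

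For \ref{dcm3}, we take a nonzero $v\in\U\sub\V_n$ and follow Cases~1 and~2 of \ref{dc4}. The partitions $a$, $b$, $c$ and $d$ used there already lie in $\P_n$. This yields $v_{[n]}+v_\es\in\U$ or $v_{[n]}-v_\es\in\U$, and by \ref{dcm2} we get $\V_n^+\sub\U$ or $\V_n^-\sub\U$.

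The consequences follow from these parts. Irreducibility holds because any nonzero submodule of $\V_n^\pm$ must contain $\V_n^+$ or $\V_n^-$, and since $\V_n^+\cap\V_n^-=0$ it must be the whole space. The direct sum decomposition $\V_n=\V_n^+\op\V_n^-$ holds as vector spaces, and by \ref{dcm1} both summands are submodules.

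The only real obstacle is the bookkeeping: we must confirm that nothing in the earlier proof used an object $m\neq n$ or a submodule generated using other hom-sets. Once this is checked, the argument is purely local to $\P_n$.
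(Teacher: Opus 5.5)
Your proposal is correct and follows the paper's own argument: the paper observes that the monoid version follows from the proof of Theorem~\ref{thm:decomp} by restricting to the cases $m=n$, where every partition constructed lies in $\P_n$. Your separate handling of $n=0$ (using that $2$ is invertible and $v_{0,[0]}-v_{0,\es}=0$) and your irreducibility argument (which uses $\V_n^\pm\neq0$ for $n\geq1$) are sound additions to that bookkeeping.
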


\begin{rem}\label{rem:decomp}
Neither $\V_n^+$ nor $\V_n^-$ is a faithful $\P_n^\Phi$-module for $n\geq2$.  Consequently, neither $\V^+$ nor $\V^-$ is a faithful $\P^\Phi$-module.  To verify this, fix $\emptyset \subset Y \subset [n]$, and define the partitions
\[
a = \begin{partn}{2}
Y & Y^c \\ \hhline{~|-}
Y & Y^c
\end{partn}\COMMA
b = \begin{partn}{2}
Y & Y^c \\ \hhline{~|-}
Y^c & Y
\end{partn}\COMMA
c = \begin{partn}{2}
Y & Y^c \\ \hhline{-|-}
Y & Y^c
\end{partn}\AND
d = \begin{partn}{2}
Y & Y^c \\ \hhline{-|-}
\multicolumn{2}{c}{\bn}
\end{partn},
\]
all from $\P_n$.  Also let $\CC = \{\emptyset, Y, Y^c, [n]\}$, noting that these sets index the non-zero rows of $\ol a$, $\ol b$, $\ol c$ and $\ol d$.  The same sets also index the non-zero columns of $\ol a$, $\ol b$ and $\ol c$, whereas the non-zero columns of $\ol d$ are indexed by $\es$ and $[n]$.  Again writing $v_X=v_{n,X}$ for $X\sub[n]$, it follows that
\[
\ol av_X = \ol bv_X = \ol cv_X = \ol dv_X = 0 \qquad\text{for all $X \not \in \CC$.}
\]
This then gives
\begin{equation}\label{eq:XnotinC}
(i,f)\cdot(v_X+v_{X^c}) = 0 = (i,f)\cdot(v_X-v_{X^c})  \qquad\text{for all $X \not \in \CC$, and for $f\in\{a,b,c,d\}$.}
\end{equation}
Next, for $X \in \CC$ it is straightforward to verify that 
\begin{align*}
\ol av_X &= \begin{cases} 
v_\emptyset + v_{Y^c} & \mbox{ if } X=\emptyset \mbox{ or } X = Y^c,\\
v_Y + v_{[n]} & \mbox{ if } X=Y \mbox{ or } X = [n],\end{cases}
\intertext{and}
\ol bv_X &= \begin{cases}
v_\emptyset + v_{Y^c} & \mbox{ if } X=\emptyset \mbox{ or } X = Y,\\
v_{Y} + v_{[n]} & \mbox{ if } X=Y^c \mbox{ or } X = [n].\end{cases}
\intertext{Hence $(i,a) \cdot (v_X + v_{X^c}) = 2^i(v_\emptyset + v_{Y} + v_{Y^c} + v_{[n]})  = (i,b)\cdot (v_X + v_{X^c})$ for all $X \in \CC$.  Combined with \eqref{eq:XnotinC}, this demonstrates that $\V_n^+$ is not a faithful $\P_n^\Phi$-module. Meanwhile,}
\ol cv_X &= v_{\emptyset} + v_Y + v_{Y^c} + v_{[n]} \hspace{7mm}\text{for all $X\in\CC$,}
\intertext{and}
\ol dv_X &= \begin{cases}
v_{\emptyset} + v_Y + v_{Y^c} + v_{[n]} &\text{if $X=\es$ or $X=[n]$,}\\
0 &\text{if $X=Y$ or $X=Y^c$.}
\end{cases}
\end{align*}
Hence $(i,c) \cdot (v_X - v_{X^c}) = 0  = (i,d)\cdot (v_X - v_{X^c})$ for all $X \in \CC$, and it again follows that~$\V_n^-$ is not a faithful $\P_n^\Phi$-module.
\end{rem}

\section{The Brauer category}\label{sect:Bn}

As before, fix a non-trivial idempotent semiring $K$.  In this section we consider the Brauer category $\B$, and show that the restriction to~$\B$ of the representation ${\varphi:\P\to\M}$ induces a pair of `reduced' representations of~$\B$ (and hence also of the Temperley--Lieb category~$\TL$).  Whereas $\varphi$ maps a hom-set $\P_{m,n}$ into $\M_{2^m,2^n}$, the reduced representations map $\B_{m,n}$ into $\M_{2^{m-1},2^{n-1}}$ when $m,n\geq1$ (with small adjustments to these dimensions when~$m=0$ or~$n=0$).
These representations are faithful when restricted to the odd-degree subcategory $\B_\odd = \bigcup_{m,n\in\NN}\B_{2m+1,2n+1}$.  
We also obtain corollaries for the corresponding twisted categories.

Throughout this section, for each $n\in\NN$ we order the subsets of $[n]$ so that the subsets of odd cardinality are earlier in the ordering than those of even cardinality.  Consider now a Brauer partition $a\in\B$, and its image $\ol a\in\M$ under $\varphi$.  Since each $a$-block has size $2$, it is immediate from Definition \ref{def:therep} that $\overline{a}_{X,Y} = 1$ implies that $|X| \equiv |Y| \bmod 2$. Thus we have
\[
\overline{a} =  \begin{pmatrix} \overline{a}^1 & O \\
   O & \overline{a}^0\\
\end{pmatrix},
\]
where:
\bit
\item $\overline{a}^1$ is the restriction of $\overline{a}$ to the rows and columns indexed by subsets of odd cardinality,
\item $\overline{a}^0$ is the restriction of $\overline{a}$ to the rows and columns indexed by subsets of even cardinality, and
\item $O$ denotes zero matrices of appropriate dimensions.
\eit
Note that $\ol a^1$ is an empty matrix if $0\in\{m,n\}$, as there are no odd-sized subsets of~${[0]=\es}$; in this case $\ol a=\begin{pmatrix}O&\ol a^0\end{pmatrix}$ or $\ol a=\begin{pmatrix}O\\\ol a^0\end{pmatrix}$, as appropriate.  Otherwise,~$\ol a^1$ and~$\ol a^0$ both belong to $\M_{2^{m-1},2^{n-1}}$.  

\begin{rem}\label{rem:notens}
It follows immediately from the block structure above, together with Theorem \ref{thm:partition}, that
\[
\B\to\M:a \mapsto \overline{a}^1 \AND \B\to\M:a \mapsto \overline{a}^0
\]
are both involutive category representations.  These are not \emph{tensor} representations, however, as can be seen by considering the dimensions of the matrices.  For example, if $a\in\B_{m,n}$ and $b\in\B_{s,t}$, where $m,n,s,t\geq1$, then for $i=1,2$ we have
\[
\ol{a\op b}^i \in \M_{2^{m+s-1},2^{n+t-1}}, \qquad\text{but}\qquad \ol a^i \ot \ol b^i \in \M_{2^{m+s-2},2^{n+t-2}}.
\]
\end{rem}

It turns out that the above `reduced' representations of $\B$ are injective when restricted to the odd-degree subcategory $\B_\odd = \bigcup_{m,n\in\NN}\B_{2m+1,2n+1}$:

\begin{thm}
\label{thm:Brauer_odd}
Let $K$ be a non-trivial idempotent semiring. Then $a \mapsto \overline{a}^1$ and $a \mapsto \overline{a}^0$ are both faithful involutive category representations of $\B_\odd$ in $\M(K)$.  These both yield faithful involutive representations of the monoids $\B_n$ for each odd $n$ by $2^{n-1} \times 2^{n-1}$ matrices over $K$.
\end{thm}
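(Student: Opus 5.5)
By Remark \ref{rem:notens}, both maps $a\mapsto\ol a^1$ and $a\mapsto\ol a^0$ are already involutive category representations of $\B$, and hence of $\B_\odd$. Object injectivity is also immediate: for odd $n$ the object $n$ goes to $2^{n-1}$, and $n\mapsto 2^{n-1}$ is injective on odd $n$. So the only thing left to prove is injectivity on morphisms. The statement about the monoids $\B_n$ with $n$ odd then follows, since $\B_n$ maps into $\M_{2^{n-1}}$.

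The plan is to show that for $a\in\B_{m,n}$ with $m,n$ odd, the full matrix $\ol a$ can be recovered from either block alone. The tool is complementation. Since $a$ is a partition of $[m]\cup[n]'$, a set $X\cup Y'$ is a union of $a$-blocks if and only if its complement $X^c\cup (Y^c)'$ is. This is the observation already used in the proof of Theorem \ref{thm:decomp}. It gives
\[
\ol a_{X,Y}=\ol a_{X^c,Y^c} \qquad\text{for all } X\sub[m],\ Y\sub[n].
\]
Because $m$ is odd, $X\mapsto X^c$ swaps the odd-sized and even-sized subsets of $[m]$, and likewise for $[n]$. So the rule $\ol a_{X,Y}=\ol a_{X^c,Y^c}$ rebuilds $\ol a^0$ from $\ol a^1$ and vice versa. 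Put differently, after suitably reordering rows and columns, $\ol a^0$ and $\ol a^1$ are the same matrix.

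Injectivity then follows. If $\ol a^1=\ol b^1$ (or $\ol a^0=\ol b^0$) for $a,b\in\B_{m,n}$, the two blocks determine each other, so $\ol a=\ol b$, and Proposition \ref{prop:vp}\ref{vp1} gives $a=b$. Distinct hom-sets are sent to matrices of different sizes, so there is no clash across hom-sets. The one place where oddness is essential is the parity swap under complementation. Apart from that, the main thing to check is that no degenerate cases arise: for odd $m,n$ we have $m,n\geq1$, so both blocks are non-empty and have size $2^{m-1}\times2^{n-1}$.
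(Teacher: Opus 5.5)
Your proof is correct, but it takes a different route from the paper's. The paper proves injectivity by reconstructing $a$ directly from each block separately:
\begin{itemize}
\item From $\ol a^1$ it reads off the transversals via the entries $\ol a^1_{\{x\},\{y\}}$, and the non-transversals via $\ol a^1_{\{x,i,j\},\{y\}}$. This uses that odd $m,n$ force at least one transversal.
\item From $\ol a^0$ it reads off the non-transversals via $\ol a^0_{\{i,j\},\es}$ and $\ol a^0_{\es,\{i,j\}}$. It then deduces $\dom(a)$, $\codom(a)$ and the (odd) rank, and locates the transversals by an intersection trick when the rank is at least $3$.
\end{itemize}
You instead use the complementation identity $\ol a_{X,Y}=\ol a_{X^c,Y^c}$, which holds for every partition. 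This identity exchanges the two parity blocks exactly when $m$ and $n$ are odd. So $\ol a^0$ is obtained from $\ol a^1$ by fixed row and column permutations depending only on $m$ and $n$. Together with the vanishing off-diagonal blocks, this recovers $\ol a$, and Proposition \ref{prop:vp}\ref{vp1} finishes the proof.

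What your route buys: it is shorter and more conceptual. It shows that on $\B_\odd$ the two reduced representations are actually equivalent, conjugate by the permutation matrices implementing $X\mapsto X^c$. It also makes transparent exactly where oddness enters. It costs no new combinatorics, since it reduces faithfulness to that of $\varphi$.

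What the paper's route buys: explicit knowledge of which entries of each block encode which blocks of $a$. That information survives in the even case, where complementation preserves parity and your trick gives nothing. The paper reuses it in Remark \ref{rem:Brauer_even} to compute the kernels on $\B_\even$. It also reuses it in Theorem \ref{thm:TL_even}, where the recovery of the upper and lower blocks, $\dom(a)$ and $\codom(a)$ from $\ol a^0$ feeds the planarity argument.
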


\begin{proof}
From the discussion above, it suffices to show that the maps $a \mapsto \overline{a}^1$ and $a \mapsto \overline{a}^0$ are injective.  To do so, let $a \in \B_{m,n}$, where $m,n\in\NN$ are both odd.  We show that $a$ can be reconstructed from either of $\ol a^1$ or~$\ol a^0$.

For each $x\in[m]$ and $y \in [n]$, we have $\overline{a}^1_{\{x\},\{y\}} = 1$ if and only if $\{x,y'\}$ is a transversal of $a$. Since all $a$-blocks have size $2$, we may therefore deduce all transversals from $\overline{a}^1$.  Moreover, since $m$ and $n$ are odd, $a$ must contain at least one transversal, say $\{x,y'\}$. Then for distinct $i,j \in [m] \setminus \{x\}$ we have $\overline{a}^1_{\{x,i,j\},\{y\}} = 1$ if and only if $\{i,j\}$ is an upper $a$-block.  Thus, we can recover all upper $a$-blocks from $\ol a^1$, and similarly for the lower blocks.

Similarly, the upper and lower blocks of $a$ can be recovered from $\ol a^0$ by looking at the entries $\ol a^0_{\{i,j\},\es}$ and $\ol a^0_{\es,\{i,j\}}$.  Knowing these, we can also recover $\dom(a)$ and $\codom(a)$, as the elements of $[m]$ or $[n]$ not belonging to upper or lower blocks, and hence we also know ${r:=\rank(a) = |{\dom(a)}| = |{\codom(a)}|}$.  If $r=1$, then ${\dom(a)=\{x\}}$ and ${\codom(a)=\{y\}}$ for some $x\in[m]$ and $y\in[n]$, and the final remaining $a$-block is the transversal $\{x,y'\}$.  When $r\geq3$ (recalling that $r$ is odd), we recover the transversals of $a$ as follows.  Fix some $x\in\dom(a)$, and let $y,z\in\dom(a)$ be such that $\{x,y,z\}$ has size~$3$.  Then there exist unique subsets $U,V\sub[n]$ of size $2$ such that $\ol a_{\{x,y\},U}^0 = \ol a_{\{x,z\},V}^0 = 1$.  The transversal of~$a$ containing $x$ is then $\{x,w'\}$, where $w$ is the unique element of~$U\cap V$.
\end{proof}

\begin{rem}\label{rem:Brauer_even}
In the case where $m$ and $n$ are even, the same argument applies to allow us to recover all the transversals of $a\in\B_{m,n}$ (if any) from $\ol a^1$, and \emph{in the case where there is at least one transversal} we can also recover the upper and lower blocks. However, the representation $a \mapsto \overline{a}^1$ is \emph{not} injective on even-degree hom-sets $\B_{m,n}$ (apart from trivially in the cases in which $m,n\leq2$), since it does not distinguish any two rank-$0$ partitions.  Indeed, $\ol a^1=O$ is an appropriately-sized zero matrix when $\rank(a)=0$. 
It follows that the kernel of the representation $\B_\even\to\M:a\mt\ol a^1$ is the Rees congruence over the rank-$0$ ideal of $\B_\even$; see \cite[Chapter 7]{ER2023} for more on congruences of Brauer categories.

The representation $\B_\even\to\M:a\mt\ol a^0$ is also not faithful, and we can again compute its kernel.  One can still recover the non-transversals of $a\in\B_\even$ from the entries $\ol a_{\{i,j\},\es}^0$ and $\ol a_{\es,\{i,j\}}^0$.  One can also recover the transversals \emph{if there are at least four of them} (recall that $\rank(a)$ is even), using the argument at the end of the above proof.  However, this argument breaks down when $\rank(a)=2$.  For example, consider
\begin{equation}\label{eq:ab}
a = \custpartn{1,2,3,4,10,11}{1,2,3,4,7,8}{\stline11\stline22\uarc34\darc34\uarc{10}{11}\darc78\udotted4{10}\ddotted47}
\AND
b = \custpartn{1,2,3,4,10,11}{1,2,3,4,7,8}{\stline12\stline21\uarc34\darc34\uarc{10}{11}\darc78\udotted4{10}\ddotted47},
\end{equation}
both from $\B_{m,n}$, where $m,n\geq2$ are even (shown here in the case $m>n$).  Then for $c=a,b$ we have  $\overline{c}^0_{X,Y} = 1$ if and only if $X$ is a union of ${\rm ker}(c)$-classes, $Y$ is a union of ${\rm coker}(c)$-classes, and $\{1,2\}$ is either a subset of both $X$ and $Y$ or else disjoint from both $X$ and $Y$. Since ${\rm ker}(a) = {\rm ker}(b)$ and ${\rm coker}(a) = {\rm coker}(b)$, this gives $\overline{a}^0=\overline{b}^0$.
It follows from all of this that the kernel of the representation $\B_\even\to\M:a\mt\ol a^0$ is the congruence
\begin{align*}
\big\{(a,b)\in\B_\even\times\B_\even:  \bd(a)=\bd(b),\ \br(a)=\br(b),\ &\rank(a)=\rank(b)=2,\\
& \ker(a)=\ker(b),\ \coker(a)=\coker(b)\big\},
\end{align*}
which was denoted $\mu_{I_0,\S_2}^{I_\omega}$ in \cite[Theorem 7.12]{ER2023}.
\end{rem}

As in the previous section, the above representations can be adapted to provide faithful representations of the ($d$-)twisted Brauer categories over a semiring of appropriate characteristic.  We only provide a brief sketch of a proof in the infinite case.

\begin{cor}
\label{cor:twistBrauer}
Let $K$ be a semiring of characteristic $0$.  Then $(i,a) \mapsto 2^i \bar{a}^1$ and ${(i,a) \mapsto 2^i \bar{a}^0}$ are both faithful involutive category representations of $\B_\odd^\Phi$ in $\M(K)$.  These both yield faithful involutive representations of the monoids $\B_n^\Phi$ for each odd $n$ by $2^{n-1} \times 2^{n-1}$ matrices over $K$.
\end{cor}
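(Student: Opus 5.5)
The plan is to combine the block-diagonal decomposition of $\ol a$ with the argument of Theorem~\ref{thm:twistpartition}. First, for $(i,a)\in\B^\Phi_\odd$ with $a\in\B_{m,n}$, the matrix $\rho(i,a)=2^i\ol a$ from Theorem~\ref{thm:twistpartition} has the block-diagonal form
\[
2^i\ol a=\begin{pmatrix} 2^i\ol a^1 & O\\ O & 2^i\ol a^0\end{pmatrix},
\]
because, exactly as in the untwisted discussion, $\ol a_{X,Y}=1$ forces $|X|\equiv|Y|\bmod 2$ when every $a$-block has size~$2$. Since block-diagonal matrices with a common partition of indices multiply blockwise, and since $\rho$ is a morphism (Theorem~\ref{thm:twistpartition}), I will read off for composable $(i,a),(j,b)$ the identity
\[
2^{i+j+\Phi(a,b)}\,\ol{ab}^{\,\epsilon}=(2^i\ol a^{\epsilon})(2^j\ol b^{\epsilon})\qquad (\epsilon=0,1).
\]
Identities go to identities because $\ol{\io_n}=I_{2^n}$ (Proposition~\ref{prop:vp}\ref{vp2}) restricts to identity blocks. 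Transposition also respects the odd/even index split, so Proposition~\ref{prop:vp}\ref{vp3} gives $\ol{a^*}^{\,\epsilon}=(\ol a^{\epsilon})^T$ and hence involutivity. Thus both maps are involutive category representations of $\B^\Phi$, and in particular of $\B^\Phi_\odd$.

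Faithfulness on $\B^\Phi_\odd$ is the only step requiring care. Given $2^i\ol a^{\epsilon}$ with $m,n$ odd, I first check that this matrix is nonzero. For $\epsilon=1$ this holds because $a$ has a transversal $\{x,y'\}$, so the $(\{x\},\{y\})$ entry is $1$. For $\epsilon=0$, the set $X\cup Y'$ consisting of all vertices gives entry $1$ when $|X|=m$ and $|Y|=n$ have the same parity; here both are odd, so I instead use the entry $(\es,\es)$, which equals $1$ for every $a$ because the empty union of blocks is allowed. Once the matrix is known to be nonzero, all its nonzero entries equal $2^i$, and since $K$ has characteristic~$0$ this determines $i$.

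Replacing each nonzero entry by $1$ then recovers $\ol a^{\epsilon}$, and Theorem~\ref{thm:Brauer_odd} recovers $a$ from $\ol a^{\epsilon}$. One caveat: Theorem~\ref{thm:Brauer_odd} is stated over an idempotent semiring, but its reconstruction argument depends only on the zero-one pattern of $\ol a$, which is independent of $K$. The monoid statement follows by restricting to $\B_n^\Phi$ for odd $n$, where $\ol a^{\epsilon}\in\M_{2^{n-1},2^{n-1}}$.
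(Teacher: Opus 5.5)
Your proposal is correct and follows the paper's argument: you read off $\ol a^{\epsilon}\,\ol b^{\epsilon}=2^{\Phi(a,b)}\ol{ab}^{\epsilon}$ from the block-diagonal form of $\ol a\,\ol b=2^{\Phi(a,b)}\ol{ab}$, then recover $i$ from the common value of the nonzero entries (using characteristic $0$) and recover $a$ from the zero-one pattern via the reconstruction in Theorem~\ref{thm:Brauer_odd}. Your explicit checks that $2^i\ol a^{\epsilon}$ is nonzero, and that the reconstruction uses only the zero-one pattern, fill in details the paper leaves as ``exactly as in Section~\ref{subsect:twistedP}''.
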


\begin{proof}
The key point is that for $a\in\B_{m,n}$ and $b\in\B_{n,t}$ we have
\[
\begin{pmatrix}
\ol a^1\ol b^1 & O\\
O & \ol a^0\ol b^0
\end{pmatrix}
=
\ol a \ol b
=
2^{\Phi(a,b)}\ol{ab}
=
\begin{pmatrix}
2^{\Phi(a,b)}\ol{ab}^1 & O\\
O & 2^{\Phi(a,b)}\ol{ab}^0
\end{pmatrix}
,
\]
from which it quickly follows that the stated maps are both morphisms.  Injectivity is established exactly as in Section \ref{subsect:twistedP}.
\end{proof}

\begin{rem}
As in Remark \ref{rem:ZxPn}, we can also obtain a version of Corollary \ref{cor:twistBrauer} with $\NN$ replaced by $\ZZ$, if we additionally assume that $2$ has a multiplicative inverse in $K$.
\end{rem}

\begin{cor}
\label{cor:twistdBrauer}
Let $d$ be a natural number, and $K$ be a ring of characteristic $2^{d+1}$.  Then $(i,a) \mapsto 2^i \bar{a}^1$ and $(i,a) \mapsto 2^i \bar{a}^0$ are both faithful involutive category representations of~$\B_\odd^{\Phi,d}$ in $\M(K)$.  \emph{(}In the above representations, zero elements $\bz_{m,n}$ map to zero matrices $O_{2^{m-1},2^{n-1}}$.\emph{)}  These both yield faithful involutive representations of the monoids~$\B_n^{\Phi,d}$ for each odd $n$ by $2^{n-1} \times 2^{n-1}$ matrices over $K$.
\end{cor}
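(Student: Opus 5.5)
The plan is to mirror the proof of Theorem~\ref{thm:twistdpartition}, using the block-diagonal decomposition from Corollary~\ref{cor:twistBrauer}. Write $\psi^\ve$ for the map $(i,a)\mt 2^i\ol a^\ve$ on non-zero morphisms ($\ve\in\{0,1\}$), sending each $\bz_{m,n}$ to the appropriately sized zero matrix. For odd $m,n$ these sizes are $2^{m-1}\times2^{n-1}$. The steps are: preservation of composition, then identities and involution, then injectivity.

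For composition, take $(i,a)\in\B^{\Phi,d}_{m,n}$ and $(j,b)\in\B^{\Phi,d}_{n,t}$ with $m,n,t$ odd. By Lemmas~\ref{lem:prod} and~\ref{lem:intertwine} we have $\ol a\,\ol b = 2^{\Phi(a,b)}\ol{ab}$ over any semiring, exactly as in the proof of Theorem~\ref{thm:twistpartition}. Comparing the odd and even diagonal blocks (the displayed identity in the proof of Corollary~\ref{cor:twistBrauer}) gives
\[
2^i\ol a^\ve\,2^j\ol b^\ve = 2^{i+j+\Phi(a,b)}\ol{ab}^\ve .
\]
If $i+j+\Phi(a,b)\le d$, this is $((i,a)(j,b))\psi^\ve$. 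Otherwise $2^{i+j+\Phi(a,b)}$ is a multiple of $2^{d+1}$, which is $0$ in $K$ since $K$ has characteristic $2^{d+1}$. So the product is the zero matrix, which is the image of $\bz_{m,t}$. Products involving some $\bz$ are zero on both sides trivially. Identities are preserved because $\ol{\io_n}=I_{2^n}$ restricts to identity blocks. The involution is preserved because $\ol{a^*}=\ol a^T$ and transposition respects the odd/even block decomposition, so $\ol{a^*}^\ve=(\ol a^\ve)^T$. Note that $\B_\odd^{\Phi,d}$ is closed under composition and $^*$, so this really is a subcategory and a functor on it.

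Injectivity is the step needing care. The image of $\bz_{m,n}$ is the zero matrix, and no $(i,a)$ maps to zero: the matrix $\ol a^\ve$ has a nonzero entry (this follows from the proof of Theorem~\ref{thm:Brauer_odd}, or simply because $\ol a_{\es,\es}=1$ for $\ve=0$, and a transversal $\{x,y'\}$ gives a $1$ at $(\{x\},\{y\})$ for $\ve=1$), and $2^i\neq0$ for $i\le d$. Given $2^i\ol a^\ve$, all its nonzero entries equal $2^i$. Since $1,2,\ldots,2^d$ are distinct and nonzero in $K$, we recover $i$. Replacing each entry $2^i$ by $1$ recovers $\ol a^\ve$, and Theorem~\ref{thm:Brauer_odd} then recovers $a$. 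The monoid statement follows by restricting to the endomorphism monoid at an odd object $n$, whose images lie in $\M_{2^{n-1}}(K)$.
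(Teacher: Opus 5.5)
Your proposal is correct and is essentially the argument the paper intends. The paper gives no separate proof of this corollary; it leaves it as the combination of the block-diagonal identity $\ol a\,\ol b = 2^{\Phi(a,b)}\ol{ab}$ from the proof of Corollary~\ref{cor:twistBrauer} with the truncation and injectivity argument of Theorem~\ref{thm:twistdpartition}, which is exactly what you do. Your explicit checks that no $(i,a)$ maps to a zero matrix (using $\ol a_{\es,\es}=1$, or a transversal $\{x,y'\}$) and that $2^{e}=0$ in $K$ for $e\geq d+1$ supply details the paper leaves implicit.
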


\section{The Temperley--Lieb category}\label{sect:TLn}

We now turn our attention to the Temperley--Lieb category $\TL$, again with the intention of providing a lower-dimensional matrix representation. 
Since $\TL\sub \B$, Theorem~\ref{thm:Brauer_odd} immediately tells us that $a\mt\ol a^1$ and $a\mt\ol a^0$ are both faithful representations of $\TL_\odd$.  The representation $a\mt\ol a^1$ of $\TL_\even$ is still not faithful; as explained in Remark~\ref{rem:Brauer_even}, we have $\ol a^1=O$ for all $a\in \TL$ of rank $0$.  However, the elements $a,b\in \B_\even$ in \eqref{eq:ab} do not both belong to $\TL$, and in fact we have the following:

\begin{thm}
\label{thm:TL_even}
Let $K$ be a non-trivial idempotent semiring. Then $a \mapsto \overline{a}^0$ is a faithful involutive category representation of $\TL$ in $\M(K)$.  In particular, for each $n\geq 1$ this yields a faithful involutive representation of the monoid $\TL_n$ by $2^{n-1}\times 2^{n-1}$ matrices over $K$.
\end{thm}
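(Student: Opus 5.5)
The plan is to reduce everything to injectivity, as in the proof of Theorem~\ref{thm:Brauer_odd}. By Remark~\ref{rem:notens}, applied to the subcategory $\TL\sub\B$, the map $a\mt\ol a^0$ is already an involutive category representation of $\TL$. So it remains to show that a Temperley--Lieb partition $a\in\TL_{m,n}$ can be reconstructed from $\ol a^0$. For $m,n$ odd this is exactly Theorem~\ref{thm:Brauer_odd}, since $\TL_\odd\sub\B_\odd$. I will nevertheless give a single argument that works for both parities and uses planarity.

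The first step recovers the non-transversal blocks. As in the proof of Theorem~\ref{thm:Brauer_odd} and Remark~\ref{rem:Brauer_even}, for distinct $i,j\in[m]$ we have $\ol a^0_{\{i,j\},\es}=1$ if and only if $\{i,j\}$ is an upper $a$-block. Dually, $\ol a^0_{\es,\{i,j\}}=1$ if and only if $\{i,j\}'$ is a lower $a$-block. All such entries are indexed by even-sized sets, so they are visible in $\ol a^0$. This gives all upper and lower blocks. The elements not covered by these blocks then give $\dom(a)=\{x_1<\cdots<x_r\}$ and $\codom(a)=\{y_1<\cdots<y_r\}$.

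The second step recovers the transversals, and this is where planarity replaces the delicate argument of Theorem~\ref{thm:Brauer_odd}. That argument fails for $\rank(a)=2$ in $\B_\even$; see \eqref{eq:ab}. Here instead I claim that in a planar Brauer partition the transversals are exactly $\{x_k,y_k'\}$ for $1\leq k\leq r$. Indeed, if $\{x_k,y_l'\}$ and $\{x_{k'},y_{l'}'\}$ were transversals with $x_k<x_{k'}$ but $y_l>y_{l'}$, the two edges would have to cross inside the region \eqref{eq:region}. Hence $a$ is completely determined by its upper blocks, lower blocks, $\dom(a)$ and $\codom(a)$, all of which were read off from $\ol a^0$ in the first step. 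This gives injectivity on each hom-set $\TL_{m,n}$, and in particular on each monoid $\TL_n$, represented by $2^{n-1}\times2^{n-1}$ matrices for $n\geq1$.

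The main point to check is that crossing claim; it is the only place where $\TL$ is used rather than $\B$, and it is a standard Jordan-curve-type fact about arcs drawn in the strip. A secondary subtlety is faithfulness on objects and across hom-sets. Distinct hom-sets $\TL_{m,n}$ with $m,n\geq1$ are sent to matrices of distinct sizes $2^{m-1}\times2^{n-1}$. However, the objects $0$ and $1$ both go to dimension $1$: for $m=0$ and $m=1$ the only even-sized subset of $[m]$ is $\es$. So the assertion there has to be read as injectivity on morphisms within each hom-set, or else handled by a convention for these degenerate objects, and I would state that caveat explicitly.
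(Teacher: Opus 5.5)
Your proof is correct and matches the paper's argument: you read off the upper and lower blocks from the entries $\ol a^0_{\{i,j\},\es}$ and $\ol a^0_{\es,\{i,j\}}$, deduce $\dom(a)$ and $\codom(a)$, and then use planarity to pair $x_k$ with $y_k'$. Your caveat is also justified, and the paper's proof does not address it: objects $0$ and $1$ both map to $1$, and $\es\in\TL_{0,0}$ and $\io_1\in\TL_{1,1}$ both map to the $1\times1$ matrix $(1)$, so under the paper's definition of faithful (injective on objects and morphisms) the claim for all of $\TL$ holds only hom-set by hom-set, or after discarding one of the objects $0$ or $1$.
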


\begin{proof}
As usual, it suffices to show that $a \mapsto \overline{a}^0$ is injective on $\TL$.  To do so, let ${a \in \TL}$.  As in the proof of Theorem \ref{thm:Brauer_odd}, we can recover the upper and lower blocks of~$a$ from~$\ol a^0$, and also the sets $\dom(a)$ and $\codom(a)$.  If $\dom(a)=\{x_1<\cdots<x_r\}$ and $\codom(a)={\{y_1<\cdots<y_r\}}$, then it follows by planarity that the transversals of $a$ are $\{x_1,y_1'\},\ldots,\{x_r,y_r'\}$. 
\end{proof}

\begin{rem}
As in Corollaries \ref{cor:twistBrauer} and \ref{cor:twistdBrauer}, the representation in Theorem \ref{thm:TL_even} can be adapted to give faithful representations of ($d$-)twisted Temperley--Lieb categories and monoids over (semi)rings of appropriate characteristics.
\end{rem}

The main goal in the rest of this section is to improve on Theorem \ref{thm:TL_even} by giving a faithful involutive category representation of $\TL$ utilising matrices of strictly smaller dimension.  Specifically, $a\in\TL_{m,n}$ is mapped to an $f_m\times f_n$ matrix, where these are Fibonacci numbers; see Theorem \ref{thm:TLfib}. With this goal in mind, we begin with some simple parity-related observations, which were noted in \cite[Chapter 8]{ER2023}:

\begin{lem}
\label{lem:parity}
Let $a \in \TL_{m,n}$.
\ben
    \item If $\{i,j\}$ is an upper $a$-block, then $i \not\equiv j \bmod 2$.
    \item If $\{i',j'\}$ is a lower $a$-block, then $i \not\equiv j \bmod 2$.
    \item If $\{i,j'\}$ is a transversal  $a$-block, then $i \equiv j \bmod 2$.
    \item If $r=\rank(a)$, then $r \equiv m \equiv n \bmod 2$. 
    \item If $\dom(a) = \{x_1<\cdots<x_r\}$, then $x_i\equiv i\bmod2$ for each $i$.
    \item If $\codom(a) = \{y_1<\cdots<y_r\}$, then $y_i\equiv i\bmod2$ for each $i$.
\een
\end{lem}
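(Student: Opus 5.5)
The whole lemma rests on one planarity fact. In a planar Brauer diagram, the vertices lying strictly inside the region cut off by a non-transversal block can only be joined to each other. So that region contains an even number of vertices. Everything else follows by counting parities.

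For (i), let $\{i,j\}$ be an upper block with $i<j$. Planarity (with all edges inside the strip \eqref{eq:region}) means no vertex $k$ with $i<k<j$ lies in a transversal: such a transversal would have to cross the arc from $i$ to $j$. Likewise no such $k$ lies in a block with an upper vertex outside $[i,j]$. Hence $\{i+1,\ldots,j-1\}$ is a union of upper blocks, each of size $2$, so $j-i-1$ is even and $i\not\equiv j\bmod 2$. Part (ii) is the same argument applied to $a^*$, since ${}^*$ preserves planarity.

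For (iii), let $\{i,j'\}$ be a transversal. The vertices $\{1,\ldots,i-1\}\cup\{1',\ldots,(j-1)'\}$, to the left of this line, can only be joined among themselves, again by planarity. Any transversal among them has one upper and one lower endpoint. Each non-transversal, by (i) and (ii), has both endpoints on the same side, so it lies within $[i-1]$ or within $[j-1]'$. Therefore the number of upper left vertices minus the number of lower left vertices is even, so $i-1\equiv j-1$ and $i\equiv j\bmod 2$. A cleaner route is to count the left region as a set of size $(i-1)+(j-1)$ that is a union of $2$-element blocks; this gives $i+j$ even directly, and I would use this version.

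For (iv), $m=r+2s$ counts the upper vertices and $n=r+2t$ the lower vertices, so $r\equiv m\equiv n\bmod 2$. This uses only the Brauer property.

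For (v), take $x_k\in\dom(a)$ and consider the upper vertices $1,\ldots,x_k-1$. Planarity shows that no upper block straddles $x_k$, since its arc would cross the transversal at $x_k$. So these vertices consist of $x_1,\ldots,x_{k-1}$ together with whole upper blocks, giving $x_k-1\equiv k-1\bmod 2$. Part (vi) follows dually via $a^*$.

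The main obstacle is making ``planarity forces no crossing'' rigorous rather than pictorial. I would state it once as a sublemma, informally via the Jordan curve theorem or via the standard characterisation of non-crossing pairings (for instance, $i<k<j<l$ is impossible for arcs $\{i,j\},\{k,l\}$ after unfolding the boundary), and then apply it in each part.
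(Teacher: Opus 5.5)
Your proof is correct. The paper does not prove this lemma itself; it records these parity facts as known, citing \cite[Chapter 8]{ER2023}, so there is no in-paper argument to compare against.

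What you give is the standard self-contained argument. Each block cuts off a stretch of the boundary that must itself be a union of $2$-element blocks, and a parity count then gives the result.

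The planarity sublemma you propose is the right way to make this rigorous. Read the boundary of the strip in the cyclic order $1,2,\ldots,m,n',(n-1)',\ldots,1'$. A Brauer partition is then planar exactly when no two of its blocks interleave in this cyclic order. Consequently any block $\{p,q\}$ splits the remaining vertices into two cyclic arcs, each of which is a union of blocks. This gives each of your separation claims at once:
\begin{itemize}
\item in (i), the set $\{i+1,\ldots,j-1\}$ is a union of blocks;
\item in (iii), the set $\{1,\ldots,i-1\}\cup\{1',\ldots,(j-1)'\}$ is a union of blocks;
\item in (v), an upper block $\{p,q\}$ with $p<x_k<q$ would interleave with the transversal through $x_k$, so no upper block straddles $x_k$.
\end{itemize}

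Two small remarks. In your first version of (iii), the appeal to (i) and (ii) is unnecessary, since a non-transversal block trivially has both vertices in one row. The streamlined count $(i-1)+(j-1)\equiv 0 \pmod 2$ is indeed the one to keep. Also, as you note, (iv) uses only the Brauer property, and (v) is a direct count that does not depend on (iii).
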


Our reduced representation of $\TL$ will involve mapping ${a\in\TL}$ to a sub-matrix of $\ol a$, induced on rows and columns corresponding to certain special subsets:

\begin{defn}
For a natural number $n\in\NN$, and for a subset $X\sub[n]$, we say that~$X$ is an \emph{even-gap} subset of $[n]$ if the complement $[n] \setminus X$ is a (possibly empty) disjoint union of intervals each containing an even number of points.  We write $\X_n$ for the set of all even-gap subsets of $[n]$.
\end{defn}

\begin{rem}\label{rem:eg}
It is easy to check that a non-empty subset $X \subseteq [n]$ is an even-gap subset  if and only if the least element is odd, the greatest element has the same parity as $n$ and when ordered from smallest to largest the elements of $X$ have alternating parity. In other words, $X=\{x_1<\cdots<x_r\} \subseteq [n]$ is an even-gap subset if and only if $x_i\equiv i\bmod2$ for all $i$ and $r\equiv n\bmod 2$.  In particular, it follows from Lemma~\ref{lem:parity} that  for any $a\in \TL_{m,n}$, $\dom(a)$ and $\codom(a)$ are even-gap subsets of $[m]$ and $[n]$, respectively.  Conversely, given $X\in\X_m$ and $Y\in\X_n$ with $|X|=|Y|$, it is not hard to see that there exists $a\in\TL_{m,n}$ with $\dom(a)=X$ and $\codom(a)=Y$.
\end{rem}

\begin{ex}
The even-gap subsets of $[4]$ are: $\emptyset$, $\{1,2\}$, $\{1,4\}$, $\{3,4\}$ and $\{1,2,3,4\}$.
The even-gap subsets of $[5]$ are: $\{1\}$, $\{3\}$, $\{5\}$, $\{1,2,3\}$, $\{1,2,5\}$, $\{1,4,5\}$, $\{3,4,5\}$ and $\{1,2,3,4,5\}$.
\end{ex}

We now show that the number of even-gap subsets of $[n]$ is the Fibonacci number $f_n$, where we take the convention that $f_0 = f_1 = 1$ and $f_n = f_{n-1} + f_{n-2}$ for all $n\geq 2$:

\begin{lem}
For $n\in\NN$ we have $|\X_n| = f_{n}$.
\end{lem}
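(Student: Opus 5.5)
We will prove $|\X_n| = f_n$ by induction on $n$, showing that $|\X_0| = |\X_1| = 1$ and that $|\X_n| = |\X_{n-1}| + |\X_{n-2}|$ for $n\geq2$.

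For the base cases, $\X_0 = \{\es\}$, since the complement of $\es$ in $[0]$ is empty. For $\X_1$, the subset $\es$ fails, because its complement $\{1\}$ is an interval of odd size, while $\{1\}$ itself qualifies. So $\X_1 = \{\{1\}\}$.

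For the recurrence, fix $n\geq2$ and split $\X_n$ according to whether or not $n\in X$.

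If $n\in X$, the gaps of $X$ (the maximal intervals of $[n]\sm X$) are exactly the gaps of $X\sm\{n\}$ inside $[n-1]$. Indeed, the point $n$ is not in the complement, so removing it changes no gap. Hence $X\mt X\sm\{n\}$ is a bijection from $\set{X\in\X_n}{n\in X}$ onto $\X_{n-1}$, with inverse $W\mt W\cup\{n\}$.

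If $n\notin X$, then $n$ lies in a final gap, and that gap has even size $\geq2$. So $n-1\notin X$ as well. Removing the two points $n-1,n$ shortens this final gap by $2$, possibly to nothing, and leaves every other gap unchanged. Hence $X$ is an even-gap subset of $[n-2]$. Conversely, if $W\in\X_{n-2}$, then appending $\{n-1,n\}$ to the complement either extends the final gap of $W$ by $2$ or creates a new gap of size $2$. Both preserve evenness, so $W\in\X_n$. Thus $X\mt X$ is a bijection from $\set{X\in\X_n}{n\notin X}$ onto $\X_{n-2}$.

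Adding the two cases gives $|\X_n| = |\X_{n-1}|+|\X_{n-2}|$, which together with the base cases yields $|\X_n| = f_n$.

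The only point needing care is the second case: one must check that removing or appending the pair $\{n-1,n\}$ preserves the evenness of gaps in both directions. This is handled by noting that only the final gap is affected and its length changes by exactly $2$. Alternatively, one could use the characterisation in Remark \ref{rem:eg}, but the direct gap argument is simpler.
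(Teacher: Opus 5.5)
Your proof is correct and follows the same route as the paper: you split $\X_n$ according to whether $n\in X$, identify the first part with $\X_{n-1}$ via $X\mapsto X\sm\{n\}$, and identify the second part directly with $\X_{n-2}$. Your gap-length argument for the second bijection is sound, and it supplies detail the paper omits, since the paper simply asserts that the sets not containing $n$ are exactly $\X_{n-2}$.
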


\begin{proof}
It is clear that $|\X_0|=|\X_1|=1$.  For $n\geq2$, we have the disjoint union ${\X_n = \Y_n \sqcup \Z_n}$, where
\[
\Y_n = \{A\in\X_n:n\in A\} \AND \Z_n = \{A\in\X_n:n\not\in A\}.
\]
One can check that $\X_{n-1}\to\Y_n:A\mt A\cup\{n\}$ is a bijection.  By definition, any $A\in\Z_n$ also satisfies $n-1\not\in\Z_n$.  So in fact we have $\Z_n = \X_{n-2}$.  It follows that
\[
|\X_n| = |\Y_n| + |\Z_n| = |\X_{n-1}| + |\X_{n-2}|.  \qedhere
\]
\end{proof}

For the remainder of this section for a semiring $K$ we shall view elements of $\M_{f_m,f_n}(K)$ as matrices with rows and columns indexed by the elements of $\X_m$ and $\X_n$, respectively. 

\begin{defn}
\label{def:thefibrep}
Given a (non-trivial) semiring $K$, we define the mapping
\[
\mu = \mu_K: \TL \rightarrow \M(K):a\mapsto \ool a,
\]
where for each $a \in \TL_{m,n}$ the matrix $\ool a\in \M_{f_m,f_n}(K)$ is defined, for $X\in\X_m$ and $Y\in\X_n$, by
\[
\ool a_{X,Y} = \ol a_{X,Y} = \begin{cases} 1 & \mbox{ if } X \cup Y' \mbox{ is a (possibly empty) union of } a\mbox{-blocks,}\\
0 & \mbox{ else.}
\end{cases}
\]
That is, $\ool a$ is the induced sub-matrix of $\ol a$ on the rows and columns indexed by subsets from~$\X_m$ and $\X_n$.
\end{defn}

In Theorem \ref{thm:TLfib} below, we show that $\mu$ is a faithful involutive category representation of $\TL$ in $\M$ when $K$ is an idempotent semiring.  It is not a tensor representation, however, as can again be checked by considering the dimensions of the matrices $\ool{a\op b}$ and $\ool a\ot\ool b$, for $a,b\in\TL$ (as in Remark \ref{rem:notens}).

For the proof of Theorem \ref{thm:TLfib}, we need a number of technical lemmas.
In what follows, for $X \sub [n]$ and $j \in [n]$ we will write ${X_{<j} = \set{x\in X}{x<j}}$, with analogous meanings for $X_{\leq j}$, $X_{>j}$ and so on.  First we have a simple property of even-gap subsets:

\begin{lem}\label{lem:opp}
For any $X\in\X_n$, and for any $x\in X$, we have
\[
x\equiv|X_{\leq x}| \bmod2 \AND x\not\equiv|X_{< x}| \bmod2.
\]
\end{lem}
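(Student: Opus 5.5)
The plan is to use the characterisation of even-gap subsets recorded in Remark~\ref{rem:eg}: if $X=\{x_1<\cdots<x_r\}\in\X_n$, then $x_i\equiv i\bmod 2$ for every $i$. This makes the lemma essentially immediate once $|X_{\leq x}|$ and $|X_{<x}|$ are expressed in terms of the position of $x$ in $X$.

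Concretely, fix $x\in X$ and write $x=x_i$ for the unique index $i$ with this property. Since the elements of $X$ are listed in increasing order, we have $X_{\leq x}=\{x_1,\ldots,x_i\}$ and $X_{<x}=\{x_1,\ldots,x_{i-1}\}$. Hence $|X_{\leq x}|=i$ and $|X_{<x}|=i-1$.

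Invoking Remark~\ref{rem:eg} then gives $x=x_i\equiv i=|X_{\leq x}|\bmod 2$, which is the first congruence. The second follows because $|X_{<x}|=i-1\not\equiv i\bmod 2$.

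The only point that needs care is that Remark~\ref{rem:eg} is stated as ``easy to check'' rather than proved. If a self-contained argument is wanted, I would verify the parity fact directly from the definition. The complement of $X$ is a disjoint union of intervals of even length, so the gap $\{1,\ldots,x_1-1\}$ before $x_1$ has even size, which gives $x_1$ odd. Likewise, for each $i$ the gap strictly between $x_i$ and $x_{i+1}$ has even size $x_{i+1}-x_i-1$, so consecutive elements of $X$ have opposite parity. An induction on $i$ then yields $x_i\equiv i\bmod 2$. I expect no genuine obstacle; this bookkeeping is the whole proof.
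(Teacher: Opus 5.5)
Your proposal is correct and is essentially the paper's own proof: it also writes $X=\{x_1<\cdots<x_r\}$ and reads the two congruences off the parity characterisation $x_i\equiv i \bmod 2$ from Remark~\ref{rem:eg}. Your self-contained verification of that parity fact from the even-gap definition is a sound addition that the paper omits.
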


\begin{proof}
Writing $X=\{x_1<\cdots<x_r\}$, this follows quickly from the characterisation of even-gap subsets in Remark \ref{rem:eg}.
\end{proof}

We also have the following method for constructing an even-gap subset of $[n+1]$ from an even-gap subset of $[n]$:

\newpage

\begin{lem}\label{lem:fn0}
Let $Y\in\X_n$ and $1\leq i\leq n$.  Then with $Y_1 = Y_{<i}$ and $Y_2 = Y_{\geq i}$, we have
\ben
\item $Y_1 \cup \{i\} \cup (Y_2+1) \in \X_{n+1} \iff i \not\equiv |Y_1| \bmod2$,
\item $Y_1 \cup \{i,i+1\} \cup (Y_2+2) \in \X_{n+2} \iff i \not\equiv |Y_1| \bmod2$.
\een
\end{lem}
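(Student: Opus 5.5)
The plan is to use the parity characterisation of even-gap subsets from Remark~\ref{rem:eg}. A set $W=\{w_1<\cdots<w_s\}\sub[N]$ lies in $\X_N$ if and only if $w_k\equiv k\bmod2$ for all $k$ and $s\equiv N\bmod2$. We apply this to $W=Y_1\cup\{i\}\cup(Y_2+1)$ with $N=n+1$, and to $W=Y_1\cup\{i,i+1\}\cup(Y_2+2)$ with $N=n+2$. In each case we list the elements of $W$ in increasing order; this is legitimate because every element of $Y_1$ is less than $i$, and every element of $Y_2+1$ (respectively $Y_2+2$) is greater than $i$ (respectively $i+1$). We then check the position/parity conditions block by block.

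For (i), write $p=|Y_1|$.
\begin{itemize}
\item The elements of $Y_1$ keep their positions and values, so they satisfy the condition because $Y\in\X_n$.
\item The new element $i$ sits in position $p+1$, so its condition reads $i\equiv p+1$, i.e.\ $i\not\equiv|Y_1|\bmod2$.
\item Each element $y\in Y_2$, originally in position $k$, becomes $y+1$ in position $k+1$. Both value and position shift by one, so its condition is unchanged.
\item Finally $|W|=|Y|+1\equiv n+1=N\bmod2$.
\end{itemize}
So every condition except the one at $i$ is inherited from $Y\in\X_n$, and $W\in\X_{n+1}$ if and only if $i\not\equiv|Y_1|\bmod2$.

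For (ii) the argument is the same. The element $i$ is in position $p+1$ and $i+1$ is in position $p+2$, and these two conditions are equivalent to each other, both reducing to $i\not\equiv p$. Elements of $Y_2$ shift by $2$ in both value and position. The cardinality becomes $|Y|+2\equiv n+2$.

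The one point needing care, and the main (mild) obstacle, is the converse direction. There we must note that the failure of the condition at position $p+1$ genuinely prevents membership in $\X_N$. This is immediate, since the characterisation is an if-and-only-if and $i$ really does occupy position $p+1$; the case $Y_2=\es$ causes no trouble.
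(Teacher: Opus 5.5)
Your proof is correct and follows essentially the paper's approach: write the new set in increasing order and check the parity characterisation of Remark~\ref{rem:eg} position by position, observing that every condition except the one at position $|Y_1|+1$ is inherited from $Y\in\X_n$. For (ii) you give the direct check instead of the paper's ``two applications of (i)''. This is slightly cleaner, because it proves the direction $Y_1\cup\{i,i+1\}\cup(Y_2+2)\in\X_{n+2}\implies i\not\equiv|Y_1|$ without first needing the intermediate set $Y_1\cup\{i\}\cup(Y_2+1)$ to lie in $\X_{n+1}$.
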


\begin{proof}
Write $Y_1 = \{x_1<\cdots<x_k\}$ and ${Y_2 = \{x_{k+1}<\cdots<x_l\}}$, noting that 
\[
Y_1 \cup \{i\} \cup (Y_2+1) = \{x_1<\cdots<x_k < i < x_{k+1}+1<\cdots<x_l+1\}.
\]
The first claim is then easily checked, considering cases according to whether one or both of $k=|Y_1|$ or $l-k=|Y_2|$ is~$0$. The second
claim follows from two applications of the first.
\end{proof}

To show that $\mu:\TL\to\M$ is a representation (when $K$ is an idempotent semiring), we need to show that $\ool{ab} = \ool a\, \ool b$ for any composable pair of Temperley--Lieb partitions $a,b\in\TL$.
The next two lemmas do this for two very specific choices of $b$, and for arbitrary $a$, and their proofs refer to the sets $Z_0$ from Definition \ref{defn:Z0}.

\begin{lem}\label{lem:fn1}
Let $a\in\TL_{m,n}$, and let $b = \custpartn{1,3,4,6}{1,3,4,5,6,8}{\stline11\stline33\stline46\stline68\darc45\udotted13\ddotted13 \udotted46\ddotted68 \vertlab11 \vertlab4i\vertlab6n}\in\TL_{n,n+2}$, where ${1\leq i\leq n+1}$.  Then $\ool{ab} = \ool a\, \ool b$.
\end{lem}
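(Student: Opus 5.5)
Fix $X\in\X_m$ and $Y\in\X_{n+2}$. By Lemma~\ref{lem:prod} and Lemma~\ref{lem:intertwine} (with $K$ idempotent, as in Theorem~\ref{thm:partition}), we already know $(\ol a\,\ol b)_{X,Y}=\ol{ab}_{X,Y}$. Moreover, $b$ is a Temperley--Lieb partition with no upper blocks, so $\Pi(a,b)$ has no floating components. Hence whenever $\ol{ab}_{X,Y}=1$ there is exactly one intertwining set, namely $Z_0=Z_0(a,b,X,Y)$. The whole point is therefore to show that the restricted sum $(\ool a\,\ool b)_{X,Y}=\sum_{Z\in\X_n}\ol a_{X,Z}\ol b_{Z,Y}$ still sees this intertwining set. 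It suffices to prove: if $\ol{ab}_{X,Y}=1$ with $X\in\X_m$, $Y\in\X_{n+2}$, then $Z_0\in\X_n$. The reverse inequality is automatic, since the restricted sum is a subsum and every term vanishes when $\ol{ab}_{X,Y}=0$.

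First I would describe $Z_0$ concretely. Since $b$ has no upper blocks and its transversals are $\{j,j'\}$ for $j<i$ and $\{j,(j+2)'\}$ for $j\ge i$, the intertwining condition $\ol b_{Z,Y}=1$ forces $Z\cup Y'$ to be a union of $b$-blocks. This means that either both or neither of $i,i+1$ lie in $Y$ (the lower block $\{i',(i+1)'\}$), and that
\[
Z=Y_{<i}\cup(Y_{\ge i+2}-2).
\]
So $Z$ is uniquely determined by $Y$, and it must equal $Z_0$.

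Next I would verify that this $Z$ is even-gap, using the characterisation in Remark~\ref{rem:eg}. Split into two cases. If $i,i+1\notin Y$, then $Y$ has a gap containing $i,i+1$, and deleting two consecutive points from an even gap (then shifting) keeps every gap even. This gives $Z\in\X_n$ directly, including the boundary cases where the gap is at the start or the end. If $i,i+1\in Y$, then $Y=Z_1\cup\{i,i+1\}\cup(Z_2+2)$ with $Z_1=Z_{<i}$ and $Z_2=Z_{\ge i}$, and the second part of Lemma~\ref{lem:fn0} says that $Y\in\X_{n+2}$ if and only if $Z\in\X_n$ and $i\not\equiv|Z_1|$. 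This is where care is needed. Lemma~\ref{lem:fn0} as stated assumes $Z\in\X_n$, so I would instead argue directly with the alternating-parity criterion. Removing the consecutive pair $i,i+1$, which occupies positions $k+1,k+2$ in $Y$ where $k=|Y_{<i}|$, and shifting later elements down by $2$ preserves the property $x_j\equiv j \bmod 2$ for all remaining elements. It also preserves $r\equiv n\bmod 2$, since both $r$ and $n$ drop by $2$. Hence $Z\in\X_n$. The case $i=n+1$, with transversals $\{j,j'\}$ only and the cap at $n+1,n+2$, fits the same pattern.

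The main obstacle is this even-gap verification, in particular the bookkeeping at the ends and the case $i,i+1\in Y$. I expect, however, that working directly with the criterion $x_j\equiv j\bmod2$, $r\equiv n\bmod2$ from Remark~\ref{rem:eg} makes it routine. Notably, the hypothesis on $X$ and the planarity of $a$ are barely used here, because the intertwining set is pinned down by $b$ and $Y$ alone.
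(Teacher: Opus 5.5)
Your proposal is correct and follows essentially the same approach as the paper: reduce to showing that $Z_0=bY=Y_{<i}\cup(Y_{\geq i+2}-2)$ lies in $\X_n$ whenever $\ol{ab}_{X,Y}=1$, and check this with the parity characterisation of Remark~\ref{rem:eg}. The only difference is cosmetic. In the case $i,i+1\in Y$, the paper first deletes the pair, noting that $Y_{<i}\cup Y_{\geq i+2}\in\X_{n+2}$ and $bY=b(Y_{<i}\cup Y_{\geq i+2})$, which reduces it to the other case; you instead verify that case directly via $x_j\equiv j \bmod 2$.
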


\begin{proof}
Let $X\in\X_m$ and $Y\in\X_{n+2}$.  We must show that $\ool{ab}_{X,Y} = 1 \iff ( \ool a\, \ool b)_{X,Y} = 1$.

Suppose first that $( \ool a\, \ool b)_{X,Y} = 1$.  Then by definition of matrix multiplication over $K$, there exists $Z \in \X_n$ such that $\ool{a}_{X,Z}=\ool{b}_{Z,Y} = 1$. It follows that $\ol{a}_{X,Z}=\ol{b}_{Z,Y} = 1$, and hence $\ol{ab}_{X,Y} = (\ol a\; \ol{b})_{X,Y} = 1$. Since $X, Y \in \X_n$, this in turn yields $\ool{ab}_{X,Y}= \ol{ab}_{X,Y}= 1$.

Conversely, suppose $\ool{ab}_{X,Y} = 1$.  Then $\ol{ab}_{X,Y} = 1$, so it follows from Lemma \ref{lem:intertwine} that~$Z_0$ is an intertwining set for $(a,b)$ with respect to $(X,Y)$.  Thus, $\ol a_{X,Z_0} = 1 = \ol b_{Z_0,Y}$.  The proof will therefore be complete if we can show that $Z_0\in\X_n$, as we will then have $\ool a_{X,Z_0} = 1 = \ool b_{Z_0,Y}$, and hence $( \ool a\, \ool b)_{X,Y} = 1$.  Since $\ol b_{Z_0,Y} = 1$, and since $b$ has no upper blocks, it follows from Lemma \ref{lem:ones} that $Y$ is a union of $\coker(b)$-classes, and that $Z_0 = bY$.  Writing $Y_1 = Y_{<i}$ and $Y_2 = Y_{\geq i+2}$, we therefore have
\[
Y = Y_1\cup Y_2 \qquad\text{or}\qquad Y = Y_1 \cup\{i,i+1\}\cup Y_2 .
\]
Since $Y\in\X_{n+2}$, it follows in either case that $Y_1\cup Y_2 \in \X_{n+2}$.  (For the second case it is clear that deleting two consecutive elements of an even-gap subset leaves another even-gap subset.)  In either case we also have $bY = b(Y_1\cup Y_2)$, since $i,i+1\not\in\codom(b)$, and so
\[
Z_0 = bY = b(Y_1\cup Y_2) = Y_1 \cup (Y_2-2).
\]
From Remark \ref{rem:eg}, it is also clear that $Y_1\cup Y_2\in\X_{n+2}$ implies $Y_1 \cup (Y_2-2)\in\X_n$.  Thus we indeed have~${Z_0\in\X_n}$, and as noted above this completes the proof.
\end{proof}

In the next proof, given $x,i,j\in\NN$ with $1\leq i<j$, we say $x$ is \emph{nested} by $\{i,j\}$ if $i<x<j$.  We say a subset of $\NN$ is nested by $\{i,j\}$ if each element of the subset is nested.  We similarly speak of nesting among elements of $\NN'$.

\begin{lem}\label{lem:fn2}
Let $a\in\TL_{m,n}$ where $n\geq2$, and let $b = \custpartn{1,3,4,5,6,8}{1,3,4,6}{\stline11\stline33\stline64\stline86\uarc45\udotted13\ddotted13 \ddotted46\udotted68 \vertlab11 \vertlab4i\vertlab8n}\in\TL_{n,n-2}$, where ${1\leq i\leq n-1}$.  Then $\ool{ab} = \ool a\, \ool b$.
\end{lem}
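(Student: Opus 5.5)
We follow the proof of Lemma~\ref{lem:fn1}. One direction is formal. If $(\ool a\,\ool b)_{X,Y}=1$ for $X\in\X_m$ and $Y\in\X_{n-2}$, then some $Z\in\X_n$ has $\ol a_{X,Z}=\ol b_{Z,Y}=1$. Hence $\ol{ab}_{X,Y}=(\ol a\,\ol b)_{X,Y}=1$ by Theorem~\ref{thm:partition}, and therefore $\ool{ab}_{X,Y}=1$.

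For the converse, suppose $\ool{ab}_{X,Y}=1$. By Lemma~\ref{lem:intertwine} the intertwining sets for $(a,b)$ with respect to $(X,Y)$ are exactly $Z_0\cup F$ with $F''$ a union of floating components. It suffices to show that one of them lies in $\X_n$. Here $b$ has exactly one upper block, $\{i,i+1\}$, and no lower blocks. So by Lemma~\ref{lem:ones}\ref{ones3}, any $Z$ with $\ol b_{Z,Y}=1$ equals $bY$ or $bY\cup\{i,i+1\}$. Writing $Y_1=Y_{<i}$ and $Y_2=Y_{\geq i}$, we have
\[
W:=bY=Y_1\cup(Y_2+2).
\]

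First I will check that $W\in\X_n$ always. By Remark~\ref{rem:eg}, shifting $Y_2$ by $2$ preserves the parities $x_k\equiv k$. Moreover $|W|=|Y|\equiv n-2\equiv n \bmod 2$. Second, by Lemma~\ref{lem:fn0}(ii), $W\cup\{i,i+1\}\in\X_n$ if and only if $i\not\equiv|Y_1|\bmod2$.

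So if $i,i+1\notin Z_0$, then $Z_0=W\in\X_n$ and we are done. (If $\{i,i+1\}''$ happens to be a floating component, we may also simply take $F=\es$.) The remaining case is $\{i,i+1\}\sub Z_0$, so that $Z_0=W\cup\{i,i+1\}$ is the only possible intertwining set. In this case we must show $i\not\equiv|Y_1|\bmod2$, and this is where $a$ and $X$ enter.

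I expect this parity step to be the main obstacle. The plan is to prove an auxiliary fact: if $a\in\TL_{m,n}$, $X\in\X_m$ and $\ol a_{X,Z}=1$, then $Z\in\X_n$. Applied to $Z=Z_0$, this immediately gives $Z_0\in\X_n$.

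To prove the fact, use Lemma~\ref{lem:ones}\ref{ones2} to write $Z=Xa\cup P$, with $P'$ a union of lower $a$-blocks. Fix $z\in Z$; the aim is to verify $z\not\equiv|Z_{<z}|\bmod 2$, together with $|Z|\equiv n\bmod 2$.
\begin{itemize}
\item Lower blocks of $a$ lying entirely to the left of $z'$ contribute an even number of elements to $Z_{<z}$.
\item By planarity, the blocks meeting $Z_{<z}$ and straddling $z'$ are controlled via the transversals. These are matched in order with elements of $X\cap\dom(a)$, using Lemma~\ref{lem:parity}.
\item Upper blocks of $a$ contained in $X$ have size $2$ and opposite-parity endpoints, so the parities in $X$ transfer to $Z$ using Lemma~\ref{lem:opp}.
\end{itemize}
If this general fact turns out to be awkward, I will instead argue directly. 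Since $i,i+1\in Z_0$ and $i,i+1\notin bY$, the class $[i]_\sim$ must meet $Xa$ or be reached through $\coker(a)$ from $Xa\cup bY$. Planarity of the path through the nested arc $\{i,i+1\}$ then pins down the parity of the number of elements of $Z_0$ to the left of $i$, namely $|Y_1|$, relative to $i$.
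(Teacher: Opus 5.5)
Your reduction is correct and matches the paper's. The backward direction is formal, $bY=Y_1\cup(Y_2+2)\in\X_n$, and when $\{i,i+1\}\sub Z_0$ the set $Z_0=Y_1\cup\{i,i+1\}\cup(Y_2+2)$ is the only intertwining set. By Lemma~\ref{lem:fn0} everything therefore comes down to showing $i\not\equiv|Y_1|\bmod 2$.

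The gap is in how you propose to prove this: your auxiliary fact (that $X\in\X_m$ and $\ol a_{X,Z}=1$ force $Z\in\X_n$) is false. Take the element $a\in\TL_4$ of Remark~\ref{rem:nonrepTLnPhi}, with blocks $\{1,4\},\{2,3\},\{1',4'\},\{2',3'\}$, and set $X=\es\in\X_4$ and $Z=\{2,3\}$. Then $X\cup Z'=\{2',3'\}$ is an $a$-block, so $\ol a_{X,Z}=1$, but $\{2,3\}\notin\X_4$. Your second bullet is exactly where the argument breaks. A lower block straddling $z'$ (here $\{1',4'\}$ around $2'$) need not lie in $Z$, and the transversals say nothing about it.

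The fact also proves far too much. It would give $\ool a\,\ool b=2^{\Phi(a,b)}\ool{ab}$ over characteristic $0$ for all composable $a,b$, contradicting Remark~\ref{rem:nonrepTLnPhi}. Since the statement involves only $a$, $X$ and $Z$, applying it to $Z=Z_0$ cannot rescue it.

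What must be used is that $i''$ lies in a non-floating component of $\Pi(a,b)$. The paper exploits this through a case analysis on the $a$-block containing $i'$:
\begin{itemize}
\item If it is a transversal $\{x,i'\}$, then $x\in X$ and $x\equiv i$. Planarity makes $X_{<x}\cup Y_1'$ a union of $a$-blocks, so $|Y_1|\equiv|X_{<x}|\not\equiv x$ by Lemma~\ref{lem:opp}.
\item If it is a lower block $\{x',i'\}$ with $x<i$, then $x\in Y_1$ and $x\not\equiv i$. The elements of $Y_1$ strictly between $x$ and $i$ are covered by lower blocks nested inside $\{x',i'\}$, so they are even in number. Alternation then gives $x\equiv\max(Y_1)\equiv|Y_1|$.
\item If it is a lower block $\{i',x'\}$ with $x>i$, the non-floating condition forces $x\neq i+1$; this is precisely what excludes configurations like the counterexample above. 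So $(i+1)'$ lies in a nested block $\{(i+1)',y'\}$ with $y\in Z_0$ and $y\equiv i$. A similar count inside $Y_2+2$ gives $y\equiv\min(Y_2)\not\equiv|Y_1|$.
\end{itemize}

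Your fallback sentence names the right ingredient, namely that $[i]_\sim$ is reached from $Xa\cup bY$. But ``planarity pins down the parity'' merely restates the goal. None of the parity computations above is supplied, nor is the exclusion of the block $\{i',(i+1)'\}$, so as written the key step of the proof is missing.
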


\begin{proof}
This time we must show that $\ool{ab}_{X,Y} = 1 \iff ( \ool a\, \ool b)_{X,Y} = 1$ for all $X\in\X_m$ and $Y\in\X_{n-2}$.  The backward implication is treated in the same way as for Lemma~\ref{lem:fn1}, while the forward implication again boils down to showing that $\ool{ab}_{X,Y} = 1$ implies $Z_0\in\X_n$.  

So for the rest of the proof we assume that $\ool{ab}_{X,Y} = 1$, i.e.~that $X \cup Y'$ is a union of $ab$-blocks.  As before, it follows that $\overline{a}_{X,Z_0} = \overline{b}_{Z_0,Y} = 1$.  Since $b$ has no lower blocks, it follows from Lemma \ref{lem:ones} (and $\overline{b}_{Z_0,Y} = 1$) that $Z_0$ is a union of ${\rm ker}(b)$-classes, and that $Y = Z_0b$.  Writing $Y_1 = Y_{<i}$ and $Y_2 = Y_{\geq i}$, so that $Y = Y_1 \cup Y_2$, we therefore have
\[
Z_0 = Y_1\cup(Y_2+2) \qquad\text{or}\qquad Z_0 = Y_1\cup\{i,i+1\}\cup(Y_2+2).
\]
Since $Y\in\X_{n-2}$, it follows in either case (via Remark \ref{rem:eg}) that $Y_1\cup(Y_2+2)\in\X_n$.  Thus, we are left to show that $Z_0$ belongs to $\X_n$ in the second case.  So we assume that $Z_0 = Y_1\cup\{i,i+1\}\cup(Y_2+2)$ for the rest of the proof, and we must show that $Z_0\in\X_n$.  By Lemma~\ref{lem:fn0}(ii), it is enough to show that
\begin{equation}\label{eq:iY1}
i \not\equiv |Y_1| \bmod2.
\end{equation}
We now consider three cases, according to the form of the $a$-block containing $i'$.  It may help to consult Figure \ref{fig:2}, which illustrates the product graph $\Pi(a,b)$ in each case.

\medskip
Case 1: Suppose first that $a$ contains a transversal $\{x,i'\}$ for some $x$, and note then that $x\in X$, as $i\in Z_0$ and $X\cup Z_0'$ is a union of $a$-blocks (the latter because $\ol a_{X,Z_0}=1$); see Figure \ref{fig:2a}. By Lemma~\ref{lem:parity} we have $x\equiv i \bmod2$, and so to verify  \eqref{eq:iY1} in this case it suffices to show that ${x\not\equiv |Y_1| \bmod2}$. Since $X \cup Z_0'$ is a union of $a$-blocks, and since $\{x,i'\}$ is an $a$-block, it follows from planarity that $X_{<x} \cup (Z_0)_{<i}' = X_{<x} \cup Y_1'$ is a union of $a$-blocks.  Since all $a$-blocks have size $2$, it follows that $|X_{<x}| \equiv |Y_1| \bmod2$.  Since $X\in\X_m$ and $x\in X$, it now follows from Lemma~\ref{lem:opp} that $|Y_1|=|X_{<x}| \not\equiv x \bmod2$, as required.

\medskip

Case 2: Next suppose that $a$ contains a lower block $\{x',i'\}$ where $x<i$; see Figure~\ref{fig:2b}.  By Lemma~\ref{lem:parity} we have $x\not\equiv i \bmod2$, and so to verify  \eqref{eq:iY1} in this case it suffices to show that $x\equiv |Y_1| \bmod2$. Since~$i\in Z_0$, and since $X\cup Z_0'$ is a union of $a$-blocks, it follows that $x\in Z_0$.  Thus, $x\in Y_1$ (as~$x<i$).  
Define the (possibly empty) set
\[
U = \set{u\in Y_1}{x<u<i} = \{u_1<\cdots<u_k\} \subseteq Z_0,
\]
noting that $\max(Y_1)=u_k$ if $k\geq1$, and $\max(Y_1)=x$ otherwise.
Since $\{x',i'\}$ is an $a$-block, it follows by planarity that every element of $U'$ is contained in a lower $a$-block that is nested by $\{x',i'\}$.  Since $Z_0$ is a union of $\coker(a)$-classes, it follows that $U'$ is the union of such lower $a$-blocks, and hence $k=|U|$ is even.  
The~$k+1$ largest elements of $Y_1$ are $x<u_1<\cdots<u_k$ (this is just $x$ if $k=0$).  Since $Y_1$ alternates in parity, and since $k$ is even, it follows that $x\equiv \max(Y_1)\bmod2$.
Applying Lemma \ref{lem:opp} to the even-gap set $Y$, noting that $Y_{\leq\max(Y_1)} = Y_1$ gives $\max(Y_1)\equiv|Y_1|\bmod2$, and hence $x \equiv \max(Y_1) \equiv |Y_1|\bmod2$, as required.

\medskip

Case 3: Finally, suppose that $a$ contains a lower block $\{i',x'\}$ where $i<x$; see Figure~\ref{fig:2c}.  By definition of $Z_0$, no element of $Z_0''$ belongs to a floating component of $\Pi(a,b)$, so it follows (by our standing assumption that $i\in Z_0$) that $x\not=i+1$.  Thus, vertex $(i+1)'$ is nested by the $a$-block $\{i',x'\}$ and then by planarity $a$ contains a lower block $\{(i+1)',y'\}$ with $i<i+1<y<x$.  Since $i+1\in Z_0$, and since $X\cup Z_0'$ is a union of $a$-blocks, we also have $y\in Z_0$.  By Lemma \ref{lem:parity},  $y\not\equiv i+1\bmod2$, and hence $y\equiv i\bmod2$. To verify \eqref{eq:iY1} in this case we therefore aim to show that $y \not\equiv |Y_1| \bmod 2$.
This time we define
\[
V = \set{v\in Y_2+2}{i+1<v<y} = \{v_1<\cdots<v_k\} \subseteq Z_0.
\]
Arguing as in the previous case, we see that $k$ is even, and that the $k+1$ smallest elements of $Y_2+2$ are $v_1<\cdots<v_k<y$, so that ${y\equiv \min(Y_2+2) \equiv \min(Y_2) \bmod2}$.  Applying Lemma~\ref{lem:opp} to the even-gap set $Y$, noting that $Y_1 = Y_{<\min(Y_2)}$ gives $\min(Y_2)\not\equiv|Y_1|\bmod2$, and hence ${y \equiv \min(Y_2)\not\equiv|Y_1|\bmod2}$, as required.
\end{proof}

\begin{figure}[ht]
\nc\sss{.43}
\begin{subfigure}[t]{0.45\linewidth}
\begin{center}
\begin{tikzpicture}[scale=\sss]
\uvs{1,3,6,8}
\lvs{1,3,4,6}
\udotted13\udotted68
\ddotted13\ddotted46
\stlines{1/1,3/3,6/4,8/6}
\uarcc45{red}
\node[red] () at (4.1,2.6) {\footnotesize $\phantom{''}i''$};
\draw[|-|] (0,2)--(0,0); \node[left] () at (0,1) {$b$};
\draw[|-] (0,4)--(0,2); \node[left] () at (0,3) {$a$};
\begin{scope}[shift={(0,2)}]
\nc\xx{2.5}
\uvc\xx{red}
\lvc4{red}
\lvc5{red}
\stlinec\xx4{red}
\node[red] () at (\xx,2.5) {\footnotesize $x$};
\end{scope}
\end{tikzpicture}
\end{center}
\caption{Case 1: $a$ has the transversal~$\{x,i'\}$.} \label{fig:2a}
  \end{subfigure}%
  \hspace*{\fill}   
  \begin{subfigure}[t]{0.45\linewidth}
\begin{center}
   \begin{tikzpicture}[scale=\sss]
\uvs{1,3,5,7,10,12}
\uvc4{red}
\uvc8{red}
\uvc9{red}
\lvc4{red}
\lvs{1,3,5,7,8,10}
\udotted13\udotted57\udotted{10}{12}
\ddotted13\ddotted57\ddotted{8}{10}
\stlines{1/1,3/3,5/5,7/7,10/8,12/10}
\stlinec44{red}
\uarcc89{red}
\node[red] () at (3.8,2.6) {\footnotesize $\phantom{''}x''$};
\node[red] () at (8.2,2.6) {\footnotesize $\phantom{''}i''$};
\draw[|-|] (0,2)--(0,0); \node[left] () at (0,1) {$b$};
\draw[|-] (0,4)--(0,2); \node[left] () at (0,3) {$a$};
\begin{scope}[shift={(0,2)}]
\darcxc48{.6}{red}
\end{scope}
\end{tikzpicture}
\end{center}
    \caption{Case 2: $a$ has the block~$\{x'<i'\}$.} \label{fig:2b}
  \end{subfigure}
  \begin{subfigure}[t]{0.7\linewidth}
\begin{center}
\begin{tikzpicture}[scale=\sss]
\uvs{1,3,4,5,6,8,9,10,12,13,14,16}
\lvs{1,3,4,6,7,8,10,11,12,14}
\uvc4{red}
\uvc5{red}
\uvc9{red}
\uvc{13}{red}
\lvc7{red}
\lvc{11}{red}
\udotted13
\udotted68
\udotted{10}{12}
\udotted{14}{16}
\ddotted13
\ddotted46
\ddotted8{10}
\ddotted{12}{14}
\stlines{1/1,3/3,6/4,8/6,10/8,12/10,14/12,16/14}
\stlinec97{red}
\stlinec{13}{11}{red}
\uarcc45{red}
\node[red] () at (3.75,2.7) {\footnotesize $\phantom{''}i''$};
\node[red] () at (9.2,2.6) {\footnotesize $\phantom{''}y''$};
\node[red] () at (13.25,2.65) {\footnotesize $\phantom{''}x''$};
\draw[|-|] (0,2)--(0,0); \node[left] () at (0,1) {$b$};
\draw[|-] (0,4)--(0,2); \node[left] () at (0,3) {$a$};
\node[white] () at (0,5) {$a$}; 
\begin{scope}[shift={(0,2)}]
\darcxc59{.6}{red}
\darcxc4{13}{1.2}{red}
\end{scope}
\end{tikzpicture}  
\end{center}
    \caption{Case 3: $a$ has the blocks~$\{i'<x'\}$ and $\{(i+1)'<y'\}$.} \label{fig:2c}
  \end{subfigure}%
\caption{An illustration of the product graph $\Pi(a,b)$ from the proof of Lemma \ref{lem:fn2}.  The connected component containing $i''$ is shown in red.  Only the relevant edge(s) from $a$ are shown; in Cases 1 and 2, the $a$-block containing $(i+1)'$ could be a transversal or a lower block.}  \label{fig:2}
\end{figure}

We are now almost ready to prove the main result of this section.  But first we record the following simple consequence of known results.  For $n\in\NN$ and $1\leq i\leq n-1$, we define the Temperley--Lieb partition
\[
e_{i,n} = \custpartn{1,3,4,5,6,8}{1,3,4,6}{\stline11\stline33\stline64\stline86\uarc45\udotted13\ddotted13 \ddotted46\udotted68 \vertlab11 \vertlab4i\vertlab8n} \in \TL_{n,n-2}.
\]
(Of course such an element only exists if $n\geq2$.)

\begin{prop}\label{prop:Om}
The Temperley--Lieb category $\TL$ is generated (as a category) by the set
\[
\Om = \set{e_{i,n},e_{i,n}^*}{n\in\NN,\ 1\leq i\leq n-1}.
\]
\end{prop}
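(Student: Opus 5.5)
The plan is to prove a normal-form factorisation for an arbitrary Temperley--Lieb partition, and then peel off generators one cap or cup at a time by induction. Throughout, ``generated as a category'' is understood to include the identities $\io_n$, which are empty products of generators.

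\textbf{Step 1: split through the rank.} Let $a\in\TL_{m,n}$ have rank $r$, with transversals $\{x_1,y_1'\},\ldots,\{x_r,y_r'\}$, where $x_1<\cdots<x_r$ and $y_1<\cdots<y_r$; this ordering is forced by planarity, as in the proof of Theorem~\ref{thm:TL_even}. Define:
\begin{itemize}
\item $u\in\TL_{m,r}$ to consist of the upper blocks of $a$ together with the transversals $\{x_k,k'\}$;
\item $v\in\TL_{r,n}$ to consist of the transversals $\{k,y_k'\}$ together with the lower blocks of $a$.
\end{itemize}
Both $u$ and $v$ are planar, since they inherit the non-crossing drawing of $a$. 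In the product graph $\Pi(u,v)$ each middle vertex $k''$ lies on the path $x_k\to k''\to y_k'$, so there are no floating components and $uv=a$. Thus it suffices to show two things: every $u\in\TL_{m,r}$ with no lower blocks is a product of the $e_{i,k}$, and dually every $v$ with no upper blocks is a product of the $e_{i,k}^*$. The second claim follows from the first by applying ${}^*$, because $v^*$ has no lower blocks.

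\textbf{Step 2: induction on $m$ for $u$ with no lower blocks.}
\begin{itemize}
\item \emph{Base case.} If $u$ has no upper blocks either, then $m=r$ and all blocks are transversals $\{k,k'\}$ (by planarity), so $u=\io_r$.
\item \emph{Finding an adjacent cap.} Otherwise, choose an upper block $\{i,j\}$ with $i<j$ and $j-i$ minimal. Every vertex strictly between $i$ and $j$ must, by planarity, lie in a block nested inside $\{i,j\}$. Such a block cannot be a transversal, since a transversal would have to cross the cap, so it is an upper block with a smaller gap. By minimality no such vertex exists, hence $j=i+1$.
\item \emph{Removing the cap.} Let $u'\in\TL_{m-2,r}$ be obtained from $u$ by deleting $i,i+1$ and shifting the upper vertices above $i+1$ down by $2$. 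Then $u'$ is still planar with no lower blocks, and $u=e_{i,m}u'$. Indeed, $e_{i,m}$ caps $\{i,i+1\}$ and sends its remaining upper vertices by vertical lines onto $[m-2]'$, so the composite reinserts the cap and relabels. Its product graph has no floating components, since each middle vertex of $e_{i,m}$ is the endpoint of a transversal of $e_{i,m}$.
\item \emph{Induction.} Applying the inductive hypothesis to $u'$ expresses $u$ as a product of $e_{i,k}$'s.
\end{itemize}
The degenerate cases $r=0$ and $m=0$ fit into the same scheme with empty products.

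\textbf{Main obstacle.} The only genuinely delicate point is the planarity argument guaranteeing an adjacent upper block $\{i,i+1\}$. It relies on the fact that nothing nested under a cap can be a transversal, which holds because edges are confined to the region \eqref{eq:region}. The remaining verifications of the compositions $uv=a$ and $e_{i,m}u'=u$ are routine checks of connected components in the product graph.
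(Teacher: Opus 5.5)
Your proof is correct, but it takes a different route from the paper. The paper gives no direct argument. It quotes \cite[Theorem 3.22]{E2024}, which says that $\TL$ is generated by the $e_{n-1,n}$, the $e_{n-1,n}^*$ and the diagrams $h_{i,n}$. It then only needs the identity $h_{i,n}=e_{i,n}e_{i,n}^*$ to conclude that $\Om$ also generates.

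You instead give a self-contained proof:
\begin{itemize}
\item you factor $a=uv$ through its rank, where $u$ has no lower blocks and $v$ has no upper blocks;
\item you use the minimal-gap planarity argument to find an adjacent cap $\{i,i+1\}$ and peel it off as $u=e_{i,m}u'$;
\item you handle $v$ by applying ${}^*$.
\end{itemize}

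The paper's version is a two-line reduction, but it rests entirely on that external generation theorem. Yours is elementary and gives more information. It produces an explicit normal form expressing each $a\in\TL$ as a product of generators $e_{i,k}$ followed by generators $e_{j,l}^*$. No floating components arise in any of these compositions, so the same factorisation also expresses each $(0,a)$ as a product of the elements $(0,g)$, $g\in\Om$, in the twisted category $\TL^\Phi$.
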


\begin{proof}
Among other things, \cite[Theorem 3.22]{E2024} states that $\TL$ is generated by the elements:
\bit
\item $e_{n-1,n}$ and $e_{n-1,n}^*$ for $n\in\NN$, and
\item $h_{i,n} = \custpartn{1,3,4,5,6,8}{1,3,4,5,6,8}{\stline11\stline33\stline66\stline88\uarc45\darc45\udotted13\ddotted13\udotted68\ddotted68\vertlab11\vertlab4i\vertlab8n}$ for $n\in\NN$ and $1\leq i\leq n-1$.
\eit
The elements $e_{n-1,n}$ and $e_{n-1,n}^*$ of course belong to our stated generating set, and we have $h_{i,n} = e_{i,n}e_{i,n}^*$.
\end{proof}

\begin{thm}
\label{thm:TLfib}
Let $K$ be a non-trivial idempotent semiring.  Then
\[
\mu = \mu_K:\TL\to\M(K):a\mt\ool a
\]
is a faithful involutive category representation. In particular, for each $n \in \NN$ the map $\mu$ gives a faithful involutive representation of the monoid $\TL_n$ by $f_n\times f_n$ matrices over $K$.
\end{thm}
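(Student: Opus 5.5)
Four things need checking for $\mu$: it preserves identities, it is involutive, it is injective, and it preserves composition.

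Identities and the involution are immediate. By Proposition~\ref{prop:vp}\ref{vp2}, $\ol{\io_n}=I_{2^n}$, so its induced submatrix on $\X_n\times\X_n$ is $I_{f_n}$. By Proposition~\ref{prop:vp}\ref{vp3}, $\ol{a^*}=\ol a^T$, so $\ool{a^*}=\ool a^T$.

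For injectivity, let $a\in\TL_{m,n}$; we show $a$ can be read off from $\ool a$. First, $\es$ is an even-gap subset of $[m]$ exactly when $m$ is even. If $\{i,j\}$ is an upper block with $i<j$, then $i\not\equiv j\bmod 2$ by Lemma~\ref{lem:parity}. The nested points between $i$ and $j$ form an even number of points, and by planarity they are paired among themselves. So the natural test sets are those $X\in\X_m$ of the form $\dom(a)\cup(\text{union of upper blocks})$, which are even-gap subsets by Remark~\ref{rem:eg} together with the parity facts. I would recover $\dom(a)$ and $\codom(a)$ as follows. Among all $X\in\X_m$ and $Y\in\X_n$ with $\ool a_{X,Y}=1$ and $X,Y$ both nonempty or the rank-zero analogue, take $X$ and $Y$ inclusion-minimal; by Lemma~\ref{lem:ones} these are $X=\dom(a)$ and $Y=\codom(a)$, which lie in $\X_m$ and $\X_n$ by Remark~\ref{rem:eg}. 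By planarity, the transversals are then determined, exactly as in Theorem~\ref{thm:TL_even}.

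To recover the upper blocks, note that each upper block $\{i,j\}$ together with everything it nests forms an interval block-union $B$. Then $\dom(a)\cup B$ is again even-gap, and $\ool a_{\dom(a)\cup B,\codom(a)}=1$. The minimal such sets properly containing $\dom(a)$ detect the outermost blocks; removing their nested contents iteratively (or equivalently comparing nested intervals) recovers the whole kernel. Lower blocks are dual. Alternatively, this step is simpler: $a\in\TL$ is determined by $\dom(a)$, $\codom(a)$ and the set of all $X\in\X_m$ with $\ool a_{X,\codom(a)}=1$, since the maximal nesting structure can be reconstructed from which unions of intervals are block-unions. I expect that writing this cleanly is fiddly but routine.

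For composition, use Proposition~\ref{prop:Om}: every element of $\TL$ is a product of generators $e_{i,n}$ and $e_{i,n}^*$. Lemma~\ref{lem:fn2} gives $\ool{ae_{i,n}}=\ool a\,\ool{e_{i,n}}$ for all $a$. Lemma~\ref{lem:fn1} handles $b=e_{i,n+2}^*$, noting that the partition $b$ pictured there is exactly $e_{i,n+2}^*$; it gives $\ool{ab}=\ool a\,\ool b$. Now for arbitrary composable $a,c$, write $c=g_1\cdots g_k$ with each $g_j\in\Om$ (or $c$ an identity, which is trivial). Inducting on $k$,
\[
\ool{ac}=\ool{(ag_1\cdots g_{k-1})g_k}=\ool{ag_1\cdots g_{k-1}}\,\ool{g_k}=\ool a\,\ool{g_1\cdots g_{k-1}}\,\ool{g_k}=\ool a\,\ool c,
\]
where the last step applies the same induction with $a=\io$. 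Hence $\mu$ is a functor. The heavy lifting, namely the fact that $Z_0$ is even-gap, is already done in Lemmas~\ref{lem:fn1} and~\ref{lem:fn2}, so the main remaining obstacle is the injectivity reconstruction.
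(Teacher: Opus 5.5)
Your treatment of identities, the involution and composition (the generating set $\Om$, Lemmas~\ref{lem:fn1} and~\ref{lem:fn2}, and induction on word length) is correct and is exactly the paper's argument. The gap is injectivity. This is not just a matter of writing things up cleanly: two of your concrete steps are false, and the key idea is missing.

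First, the inclusion-minimal pairs $(X,Y)$ of nonempty sets with $\ool a_{X,Y}=1$ need not be $(\dom(a),\codom(a))$. For $a=\io_3$ we have $\X_3=\{\{1\},\{3\},[3]\}$ and $\ool a$ is the identity matrix, so the minimal pairs are $(\{1\},\{1\})$ and $(\{3\},\{3\})$. Similarly, for even $m,n$, a pair consisting of an upper block and a lower block is a minimal pair. The fix is to freeze $Y=[n]$, which always lies in $\X_n$. Then $\ool a_{X,[n]}=1$ forces $X\supseteq\dom(a)$, so $\dom(a)$ is the least $X\in\X_m$ with $\ool a_{X,[n]}=1$; dually for $\codom(a)$.

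Second, $\dom(a)\cup B$ (or $\dom(a)\cup\{i,j\}$) need not be even-gap. Take $a\in\TL_4$ with upper blocks $\{1,4\},\{2,3\}$: the block $\{2,3\}$ gives the test set $\{2,3\}\notin\X_4$. By Remark~\ref{rem:eg} and planarity, $\dom(a)\cup\{i,j\}\in\X_m$ if and only if $i\not\equiv|\dom(a)_{<i}|\bmod 2$, that is, if and only if $\{i,j\}$ is nested inside an even number of upper blocks. So blocks at odd nesting depth are invisible to every test of the form $\dom(a)\cup B$. Your \emph{iteratively} and \emph{alternatively} sentences simply assert that these blocks are determined anyway, which is the whole content of the injectivity claim.

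The paper supplies the missing idea in two steps.
\begin{enumerate}
\item By planarity, the upper blocks are determined by the set of their greatest elements: take these in increasing order and pair each with the largest smaller vertex not yet used.
\item By Lemma~\ref{lem:parity}, these greatest elements can be read off. Explicit even-gap tests detect all upper blocks whose least element has a prescribed parity: use $\ool a_{\{i,j\},\es}$ when $m,n$ are even, and $\ool a_{\{i,j,x\},\{y\}}$, $\ool a_{\{x,i,j\},\{y\}}$ with $\{x,y'\}$ the first transversal when $m,n$ are odd. The remaining vertices of that parity, lying in no transversal and not least elements of detected blocks, are then exactly the greatest elements of the undetected blocks.
\end{enumerate}
Your iterative idea could also be rescued. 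For instance, $\dom(a)\cup\{i,j\}\cup\{k,l\}$ is even-gap whenever $\{k,l\}$ is nested directly inside a detected block $\{i,j\}$. But some such argument has to be given.
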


\begin{proof}
It is clear that $\mu$ maps identity partitions to identity matrices, and respects the involutions.  To show that $\mu$ is a morphism we make use of the generating set $\Om$ from Proposition \ref{prop:Om}.  Note that Lemmas \ref{lem:fn1} and \ref{lem:fn2} combine to say that
\begin{equation}\label{eq:bgi}
\ool{ab} = \ool a\, \ool b \qquad\text{for any $a\in\TL$ and $b\in\Om$ with } \br(a)=\bd(b).
\end{equation}
Now consider an arbitrary composable pair $a,b\in\TL$.  By Proposition \ref{prop:Om}, we can write $b = g_1\cdots g_k$ for some $g_1,\ldots,g_k\in\Om$ with each $\br(g_i)=\bd(g_{i+1})$.  Using \eqref{eq:bgi} and induction, we have $\ool b = \ool {g_1}\cdots \ool {g_k}$.  Another induction gives ${\ool{ab} = \ool{ag_{1}\cdots g_{k}} = \ool a \, \ool {g_{1}}\cdots \ool {g_{k}} = \ool a\, \ool b}$.

Now that we have proved that $\mu$ is a morphism, it remains to show injectivity.  To do so, let $a \in \TL_{m,n}$; as usual, we show that we can recover the blocks of $a$ from $\ool a$.  First note that $\dom(a) \in \X_m$ (see Remark \ref{rem:eg}), and that this is the minimal set $X \in \X_m$ such that $\ool{a}_{X,[n]} = 1$.  Thus, we can recover $\dom(a)$ from $\ool a$, and by symmetry we can also recover $\codom(a)$.  One can then recover all transversals (if any) of $a$ by planarity, as in the proof of Theorem \ref{thm:TL_even}.

Next we claim that if we can identify the greatest element in each upper block of $a$, then we can deduce the upper blocks themselves. Indeed, suppose the greatest elements of the upper blocks are $x_1 < x_2 < \dots < x_k$. Since~$x_1$ is the greatest element of some upper block, it cannot be $1$. Consider the vertex ${y_1 = x_1 - 1}$. Clearly $y_1$ cannot be the greatest element of an upper block (by the minimality assumption on $x_1$), and also cannot be part of a transversal edge or the lesser element of a block with an element other than $x_1$ (by planarity, since the connection entailed would cross that of the block containing $x_1$). Hence, the only possibility is that $y_1$ forms an upper block with~$x_1$. Now ignore the vertices $x_1$ and $y_1 (= x_1-1)$ and repeat the argument for each $x_i$ in turn, setting at each stage $y_i$ to be the greatest vertex that is less than $x_i$ and is not equal to $x_j$ or $y_j$ for any $j < i$. We see that each~$x_i$ necessarily pairs up with this $y_i$. This establishes the claim.  A symmetrical argument proves the analogous claim regarding lower blocks.

By symmetry, it remains to show that we can indeed identify the greatest element in each upper $a$-block.  We must now treat separate cases, according to the (equal) parity of $m$ and~$n$.

\medskip

Case 1: $m$ and $n$ are even.  Here we have $\emptyset \in \X_n$. Let $1 \leq i<j \leq m$ where $i$ is odd and $j$ is even. Then $\{i,j\} \in \X_m$, and $\ool{a}_{\{i,j\},\es} = 1$ if and only if $\{i,j\}$ is an upper $a$-block. In this way we can find all upper $a$-blocks whose least element is odd or equivalently all upper $a$-blocks whose greatest element is even. The odd elements of $[m]$ that neither lie in a transversal nor appear as the least element of an upper block must therefore be the greatest element of an upper block. In this way, we identify the greatest element of each upper block.

\medskip

Case 2: $m$ and $n$ are odd.  Here $a$ must contain at least one transversal.  Let $\{x,y'\}$ be the first transversal of $a$; thus $x$ and $y$ are both odd, and we consequently have $\{x\} \in \X_m$ and $\{y\}\in\X_n$.  We show how to identify the greatest element of any upper $a$-blocks lying to the left or right of this transversal.  

First, if $1 \leq i<j < x$ where $i$ is odd and~$j$ is even then $\{i,j, x\} \in \X_m$, and $\ool{a}_{\{i,j,x\}, \{y\}} = 1$ if and only if $\{i,j\}$ is an upper~$a$-block. In this way we can find all upper $a$-blocks lying to the left of $x$ whose least element is odd and whose greatest element is even. Since $\{x,y'\}$ is the first transversal of $a$, the odd elements of $\{1,\ldots, x-1\}$ that do not appear as the least element of an upper $a$-block must therefore be the greatest element of an upper block. Thus, we have identified the greatest element of each upper block lying to the left of $x$. 

Similarly, if $x< i<j \leq n$ where $i$ is even and $j$ is odd, then $\{x,i,j\} \in \X_m$, and $\ool{a}_{\{x,i,j\}, \{y\}} = 1$ if and only if $\{i,j\}$ is an upper~$a$-block. In this way we can find all upper $a$-blocks lying to the right of~$y$ whose least element is even and whose greatest element is odd. The even elements of $\{x+1,\ldots, m\}$ that do not appear in a transversal or as the least element of an upper $a$-block must therefore be the greatest element of some upper block. Thus, we have also identified the greatest element of each upper block lying to the right of $y$. 
\end{proof}

\begin{rem}\label{rem:nonrepTLnPhi}
Unlike the situation with the partition and Brauer categories, the above representation $a\mt\ool a$ of $\TL$ does not lead to analogous representations $(i,a)\mt 2^i\ool a$ of the twisted categories $\TL^\Phi$ or $\TL^{\Phi,d}$ (over rings of appropriate characteristics).  For example, consider the Temperley--Lieb partition $a = \custpartn{1,2,3,4}{1,2,3,4}{\uarcx14{.7}\uarc23\darcx14{.7}\darc23} \in \TL_4$.  Ordering the elements of $\X_4$ as $\emptyset,\{1,4\},\{1,2,3,4\},\{1,2\},\{3,4\}$, we then have
\[
\ool a = 
\begin{pmatrix}
1&1&1&0&0\\
1&1&1&0&0\\
1&1&1&0&0\\
0&0&0&0&0\\
0&0&0&0&0
\end{pmatrix}.
\]
In $\TL_4^\Phi$ we have $(0,a)(0,a)=(2,a)$, but in any semiring of characteristic $0$ we have ${2^0\ool a\cdot 2^0\ool a = 3\ool a \not= 2^{\Phi(a,a)}\ool a}$.
The reason for this `failure' is that although $\ool a$ is indeed a submatrix of $\ol a$ for any $a\in\TL_n$, it does not feature in a block decomposition of $\ol a $ of the kind used in the proof of Corollary \ref{cor:twistBrauer}.

More generally, $\ool a\cdot\ool b$ need not even be a scalar multiple of $\ool{ab}$ (over a ring of characteristic~$0$), for a composable pair $a,b\in\TL$.  For example, with ${b = \custpartn{1,2,3,4}{1,2,3,4}{\stline11\stline44\uarc23\darc23} \in \TL_4}$, and with the same ordering on $\X_4$, we have
\[
\ool{b^2} = \ool b = 
\begin{pmatrix}
1&0&0&0&0\\
0&1&1&0&0\\
0&1&1&0&0\\
0&0&0&0&0\\
0&0&0&0&0
\end{pmatrix}
\AND
\ool b^2 = 
\begin{pmatrix}
1&0&0&0&0\\
0&2&2&0&0\\
0&2&2&0&0\\
0&0&0&0&0\\
0&0&0&0&0
\end{pmatrix}.
\]
\end{rem}

\section{Representations of diagram algebras}\label{sect:algebras}

In this section we relate our main results to known results on the (classical) representation theory of diagram algebras.  The latter are endomorphism algebras in natural linear versions of our diagram categories, whose definitions we now recall.  These can be seen as special instances of a categorical version of the \emph{twisted semigroup algebras} considered for example in \cite{W2007}.

\begin{defn}
Let $K$ be a commutative unital ring, and $\de\in K$ a fixed element.  The \emph{linear partition category (with respect to $K$ and $\de$)}, denoted~$\P(K,\de)$, has:
\bit
\item object set $\NN$,
\item morphism sets $\P_{m,n}(K,\de)$ consisting of all formal $K$-linear combinations of partitions from $\P_{m,n}$,
\item composition $\pr$ defined to be the $K$-linear extension of the rule:
\[
a\pr b = \de^{\Phi(a,b)}ab \qquad{\text{for composable partitions $a,b \in \P$.}}
\]
Here $ab$ is the product in $\P$, and $\Phi(a,b)\in\NN$ is the twisting parameter from Definition \ref{defn:Phi}.
\eit
In fact, $\P(K,\de)$ is an involutive tensor category, where the involution and tensor operation are the $K$-linear extensions of those on $\P$.

We also have the corresponding linear Brauer and Temperley--Lieb categories, $\B(K,\de)$ and $\TL(K,\de)$, defined as appropriate subcategories.

Endomorphism algebras in these linear categories are the partition, Brauer and Temperley--Lieb algebras, $\P_n(K,\de)$, $\B_n(K,\de)$ and $\TL_n(K,\de)$ for each $n\in\NN$.
\end{defn}

For the rest of this section, $K$ is a commutative unital ring of characteristic $0$.
Taking $\de=2\in K$, and arguing as in Theorem \ref{thm:twistpartition} it is easy to see that the mapping $a \mapsto \overline{a}$ induces a $K$-linear involutive tensor representation of $\P(K, 2)$ in the matrix category~$\M(K)$. However, this representation is not faithful as the following example demonstrates.

\begin{ex}\label{ex:P2}
Consider the partition algebra $\P_2(K,2)$, and consider again the partitions $a,b,c\in\P_2$ from Example \ref{ex:P2_0}. Using \eqref{eq:P2}, we have
\[
\ol a + \ol b + \ol c = \begin{pmatrix}
\fourrow3111\\
\fourrow1001\\
\fourrow1001\\
\fourrow1113
\end{pmatrix}.
\]
Since this matrix is symmetric, it follows that
\[
\ol a + \ol b + \ol c = (\ol a + \ol b + \ol c)^T = \ol{a^*} + \ol{b^*} + \ol{c^*},
\]
even though $a+b+c \not= a^* + b^* + c^*$ in $\P_2(K,2)$.  One can generalise this to algebras $\P_n(K,2)$ with $n\geq2$ by fixing a proper subset $\es \subset X\subset[n]$, and taking the partitions
\[
a = \begin{partn}{2} [n] & \\ \hhline{~|-} X & X^c \end{partn} \COMMA
b = \begin{partn}{2} [n] & \\ \hhline{~|-} X^c & X \end{partn} \AND
c = \begin{partn}{2} X & X^c \\ \hhline{-|-} \multicolumn{2}{c}{[n]} \end{partn} .
\]
\end{ex}

The representation $\P(K,2)\to\M(K)$ induced by the mapping $a \mapsto \bar{a}$ restricts to give representations of the Brauer and Temperley--Lieb categories, $\B(K, 2)$ and $\TL(K, 2)$, and it is natural to ask whether these representations are faithful.  We now show that the representation $\B(K,2)\to\M(K)$ is not faithful.  The partitions we use were found using the {\sc Semigroups} package for GAP \cite{Semigroups,GAP}.

\begin{ex}\label{ex:B3}
Consider the following partitions, all from $\B_3$:
\[
a_1 = \bpt311223 \COMMa
a_2 = \bpt221313 \COMMa
a_3 = \bpp132132 \COMMa
b_1 = \bpt321213 \COMMa
b_2 = \bpt211323 \ANd
b_3 = \bpp132231 .
\]
Indexing the subsets of $[3]$ in the order $\es$, $\{1\}$, $\{2\}$, $\{3\}$, $\{1,2\}$, $\{1,3\}$, $\{2,3\}$, $\{1,2,3\}$, one can check (using GAP for example) that
\[
\ol{a_1} + \ol{a_2} + \ol{a_3} = \begin{pmatrix}
3 & 0 & 0 & 0 & 0 & 1 & 1 & 0 \\
0 & 0 & 0 & 1 & 0 & 0 & 0 & 0 \\
0 & 1 & 1 & 0 & 0 & 0 & 0 & 1 \\
0 & 1 & 1 & 0 & 0 & 0 & 0 & 1 \\
1 & 0 & 0 & 0 & 0 & 1 & 1 & 0 \\
1 & 0 & 0 & 0 & 0 & 1 & 1 & 0 \\
0 & 0 & 0 & 0 & 1 & 0 & 0 & 0 \\
0 & 1 & 1 & 0 & 0 & 0 & 0 & 3 
\end{pmatrix}
= \ol{b_1} + \ol{b_2} + \ol{b_3},
\]
even though $a_1+a_2+a_3 \not= b_1+b_2+b_3$ in $\B_3(K,2)$.  The above partitions might appear ad hoc, but one can check that we have $b_i = a_is$ for $i=1,2,3$, for the permutation~${s = \bpp122133}$.  Right multiplication by the permutation matrix $\ol s$ swaps columns $\{1\}\leftrightarrow\{2\}$ and $\{1,3\}\leftrightarrow\{2,3\}$, and it is apparent that the above matrix $\ol{a_1} + \ol{a_2} + \ol{a_3}$ is invariant under these swaps.
\end{ex}

Unlike the situation with the linear categories $\P(K,2)$ and $\B(K,2)$, it turns out that the representation
\begin{equation}\label{eq:tau}
\TL(K,2)\to\M(K):a\mt\ol a
\end{equation}
of the linear Temperley--Lieb category \emph{is} faithful, as we now explain.

In the case where $\delta$ is a natural number greater than or equal to $2$, a faithful representation of the Temperley--Lieb \emph{algebra} $\TL_n(K,\delta)$ mapping each basis element ${a \in \TL_n}$ to a zero-one matrix of size $\delta^n\times\delta^n$ is known.  This representation was first introduced by Brauer \cite{B1937}, and shown to be faithful in \cite{J1994} and \cite{DKP2006}.  When $\de=2$, these representations turn out to be restrictions of our representation~\eqref{eq:tau}, as we now show, following the presentation given in \cite{DKP2006}.  We then give a short argument to show that the full representation \eqref{eq:tau} is faithful by making use of the fact that the restriction to each Temperley--Lieb algebra is known to be faithful.

The Temperley--Lieb algebra $\TL_n(K, 2)$ is spanned by the elements of $\TL_n$, which in turn are generated by the elements
\[
h_{i,n} = \custpartn{1,3,4,5,6,8}{1,3,4,5,6,8}{\stline11\stline33\stline66\stline88\uarc45\darc45\udotted13\ddotted13\udotted68\ddotted68\vertlab11\vertlab4i\vertlab8n} \qquad\text{for $1\leq i\leq n-1$.}
\]
(We saw these elements in the proof of Proposition \ref{prop:Om}.)  For $1\leq i\leq n-1$, Brauer's representation of $\TL_n(K,2)$ maps the generators according to the rule
\begin{equation}\label{eq:Hin}
h_{i,n} \mt H_{i,n} = I_{2^{i-1}}\otimes M \otimes I_{2^{n-i-1}} \in \M_{2^n}(K), \qquad\text{where}\qquad M = \begin{pmatrix} 1&0&0&1\\0&0&0&0\\0&0&0&0\\1&0&0&1\end{pmatrix}.
\end{equation}
(Actually, Brauer maps $h_{i,n}$ to $I_{2^{n-i-1}}\otimes M \otimes I_{2^{i-1}}$; note the swapping of the identity matrices.  However, each of the two representations can be obtained from the other by first applying the isomorphism $\TL_n\to\TL_n$ given by $h_{i,n}\mt h_{n-i,n}$.)  

To see that the representation \eqref{eq:Hin} is a restriction of the representation \eqref{eq:tau}, it suffices to show that the two representations agree on the generators $h_{i,n}$.  But this follows quickly from the fact that \eqref{eq:tau} is a tensor representation, provided we order the subsets of each $[n]$ carefully.  This ordering is given inductively:
\bit
\item There is only one subset of $[0] = \es$, namely $\es$ itself.
\item Now suppose we have ordered the subsets of some $[n]$ as $A_1<\cdots<A_{2^n}$.  The ordering on $[n+1]$ is then given by
\[
A_1<\cdots<A_{2^n}<A_1\cup\{n+1\}<\cdots<A_{2^n}\cup\{n+1\}.
\]
\eit
And now, writing ${h = h_{1,2} = \custpartn{1,2}{1,2}{\uarc12\darc12} \in \TL_2}$, one can easily check that $\ol h = M$, and so indeed
\[
\ol{h_{i,n}} = \ol{\io_{i-1} \op h \op \io_{n-i-1}} = \ol{\io_{i-1}} \ot \ol h \ot \ol{\io_{n-i-1}} = I_{2^{i-1}} \ot M \ot I_{2^{n-i-1}} = H_{i,n}.
\]
Using this, we can now prove the following.

\begin{thm}\label{thm:TLC}
If $K$ is a field of characteristic $0$, then $a\mt\ol a$ determines a faithful involutive $K$-linear tensor category representation $\TL(K,2)\to\M(K)$.
\end{thm}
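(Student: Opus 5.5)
The plan is to first check that the linear extension is a representation, and then reduce faithfulness on each hom-set to the known faithfulness of Brauer's representation on the algebras $\TL_n(K,2)$.

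\emph{Representation.} Define $\tau:\TL(K,2)\to\M(K)$ by $K$-linear extension of $a\mt\ol a$ on each hom-set. For composable $a,b\in\TL$, Lemmas~\ref{lem:prod} and~\ref{lem:intertwine} (as in the proof of Theorem~\ref{thm:twistpartition}) give $\ol a\,\ol b = 2^{\Phi(a,b)}\ol{ab}$, which is the image of $a\pr b=2^{\Phi(a,b)}ab$. By bilinearity, $\tau$ preserves composition. Identities, transpose and Kronecker product are preserved by Proposition~\ref{prop:vp}, again extended linearly. So $\tau$ is an involutive $K$-linear tensor representation.

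\emph{Faithfulness.} We need injectivity of $\tau$ on each $\TL_{m,n}(K,2)$; distinct objects already map to distinct dimensions $2^m\ne2^n$. Since $\TL_{m,n}$ is nonempty only when $m\equiv n \bmod 2$, we may assume this. By the discussion above, $\tau$ restricted to $\TL_n(K,2)$ agrees with Brauer's representation, because both are determined by the generators $h_{i,n}$ and coincide there. That representation is faithful by \cite{J1994,DKP2006} when $K$ is a field. This settles the case $m=n$.

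For $m\ne n$, say $m<n$ (the other case follows by applying the involution, i.e.\ transposing), I would use a cap/cup embedding. Set $k=(n-m)/2$ and let $u\in\TL_{m,n}$ be $\io_m\op c^{\op k}$, where $c\in\TL_{0,2}$ is the single lower cup $\{1',2'\}$. The map $\TL_{m,n}\to\TL_n:x\mt u^*x$ should then be injective on diagrams and introduce no floating components. Indeed, $u^*\in\TL_{n,m}$ consists of $\io_m$ together with $k$ caps on top, so $u^*x$ is just $x$ with $k$ upper caps added on the right. This gives $\Phi(u^*,x)=0$, and $x$ is recovered by deleting those caps. Hence the $K$-linear map $L:\TL_{m,n}(K,2)\to\TL_n(K,2)$, $\sum\lambda_x x\mt u^*\pr\sum\lambda_x x=\sum\lambda_x(u^*x)$, is injective.

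Now suppose $\tau(\sum\lambda_x x)=0$. Then $\tau(L(\sum\lambda_x x))=\ol{u^*}\,\tau(\sum\lambda_x x)=0$, so faithfulness on $\TL_n(K,2)$ forces $L(\sum\lambda_x x)=0$, and injectivity of $L$ gives $\sum\lambda_x x=0$. The main step needing care is verifying that left multiplication by $u^*$ creates no floating components and is injective on $\TL_{m,n}$. This is a planarity check: the caps of $u^*$ have nothing below them to connect to except through the middle row in the tensor product, and in $u^*x$ those middle vertices lie on the identity strands of $u^*$, so they are not floating.
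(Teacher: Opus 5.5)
Your argument is correct and essentially matches the paper's: you compose with an identity-plus-caps diagram that creates no floating components, which moves $\TL_{m,n}(K,2)$ into the endomorphism algebra of the larger object, and then invoke the faithfulness of Brauer's representation there. The only difference is in undoing the composition: the paper takes $m\ge n$, multiplies on the right by $c^*$, and recovers the original element using $c^*\star c=2^k\iota_n$ and dividing by $2^k$, whereas you observe directly that $x\mapsto u^*x$ sends distinct diagrams to distinct diagrams, which avoids needing $2^k$ to be invertible.
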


\begin{proof}
It remains only to establish injectivity, so suppose $a,b\in\TL_{m,n}(K,2)$ are such that $\ol a = \ol b$.  (Here $a$ and $b$ are $K$-linear combinations of basis elements from $\TL_{m,n}$, and we write $\ol a$ and $\ol b$ for the corresponding linear combinations of matrices in $\M(K)$.)  By symmetry we may assume that $m\geq n$.  Define the Temperley--Lieb element
\[
c = \custpartn{1,3,4,5,7,8}{1,3}{\stline11\stline33\uarc45\uarc78\udotted13\ddotted13\udotted57\vertlab11\vertlab3n\vertlab8m} \in \TL_{m,n},
\]
and note that $c^*\pr c = 2^k \io_n$, where $k = (m-n)/2$ is the number of upper $c$-blocks.  From $\ol a=\ol b$, and $\Phi(d,c^*) = 0$ for each basis element $d \in \TL_{m,n}$, we deduce
\[
\ol{a\pr c^*} = \ol{ac^*} = \ol a\, \ol{c^*} = \ol b\, \ol{c^*} = \ol{bc^*} = \ol{b\pr c^*}.
\]
But $a\pr c^*$ and $b\pr c^*$ are elements of the Temperley--Lieb \emph{algebra} $\TL_m(K,2)$, so it follows by the faithfulness of Brauer's representation that $a\pr c^* = b\pr c^*$.  This in turn gives
\[
2^ka = a\pr(2^k\io_n) = a\pr c^*\pr c = b\pr c^*\pr c = b\pr(2^k\io_n) = 2^kb,
\]
and so $a=b$, as required.
\end{proof}

\section{Concluding remarks and open questions}\label{sect:conclusion}

Our main results give faithful linear representations of the diagram categories~$\P$,~$\B$ and~$\TL$, as well as their ($d$-)twisted counterparts, over various kinds of semiring.  Where these representations preserve the involution and tensor operations, we have seen that the
corresponding sequence of dimensions of matrices grows exponentially and is minimum possible for such a representation. However, it remains open whether smaller representations are possible if one does not insist on preserving these operations.

\begin{ques}
Does there exist a faithful matrix representation of $\P$ with sequence of dimensions less than $2^n$, or even growing sub-exponentially?
\end{ques}

Note that our reduced (non-tensor-preserving) representations of $\B$ and $\TL$ from Sections \ref{sect:Bn} and \ref{sect:TLn} are still exponential.  (Recall that the Fibonacci number $f_n$ is asymptotic to $\frac1{\sqrt5}\left(\frac{1+\sqrt5}2\right)^{n+1}$.) Again, we can ask if this is minimal.

\begin{ques}
Does there exist a faithful matrix representation of $\TL$ with sequence of dimensions less than the Fibonacci bound given by our results, or even growing sub-exponentially?
\end{ques}

One can also ask whether it is possible to move beyond idempotent semirings.  Given Remark \ref{rem:conv}, a positive answer to the next question would involve building a completely different representation.

\begin{ques}
Do there exist `natural' faithful (involutive tensor category) representations of $\P$, $\B$ or $\TL$ in $\M(K)$ where $K$ is a \emph{ring}, or even a \emph{field}?
\end{ques}

Analogous questions can also be asked for the corresponding twisted categories.

Of course minimum-dimension representations for individual diagram \emph{monoids} will not necessarily arise as restrictions of representations of the corresponding  categories, so there also arise separate questions about their effective dimension \eqref{eq:dim}.  (As we noted in the introduction, any finite monoid---such as $\P_n$, $\B_n$ or $\TL_n$---embeds in some transformation monoid $\T_d$, and hence in the matrix monoid $\M_d(K)$ over \emph{any} semiring~$K$.)

\begin{ques}
Given a fixed semiring $K$, what are the effective dimensions of the diagram monoids $\P_n$,~$\B_n$ and~$\TL_n$ (and their ($d$-)twisted counterparts) over $K$?  
\end{ques}

For example, given an idempotent semiring $K$, Theorems~\ref{thm:partition},~\ref{thm:Brauer_odd} and~\ref{thm:TLfib} give  
\[
{\rm dim}_K(\P_n) \leq 2^n \COMMA {\rm dim}_K(\B_n) \leq 2^{2\lfloor n/2\rfloor} \AND  {\rm dim}_K(\TL_{n}) \leq f_n.
\]
One could ask if we have equality here, at least for suitably large $n$.\footnote{For trivially small values of $n\leq1$ the above bounds give $\dim_K(\P_0)\leq1$ and $\dim_K(\P_1)\leq2$.  It is easy to see, however, that we have isomorphisms of partition and relation monoids, $\P_0\cong\R_0$ and $\P_1\cong\R_1$, so that in fact $\dim_K(\P_0)=0$ and $\dim_K(\P_1)=1$ for any idempotent semiring $K$.  Similarly, it is easy to embed the Brauer monoid $\B_2 = \left\{\smallcustpartn{1,2}{1,2}{\stline11\stline22},\smallcustpartn{1,2}{1,2}{\stline12\stline21},\smallcustpartn{1,2}{1,2}{\uarc12\darc12}\right\}$ in $\R_2$ (but not in $\R_1$), so that $\deg_K(\B_2)=2(<2^2)$.}  It is possible to investigate this question for (very) small values of $n$ computationally, using the {\sc Semigroups} package for GAP \cite{GAP,Semigroups}.  As we mentioned in Section \ref{sect:intro}, such computations reveal that when $n=2$ the partition monoid $\P_2$ cannot be faithfully represented in the relation monoid $\R_3$, and hence we have the exact value $\deg_\BB(\P_2) = 4 (=2^2)$, where $\BB$ is the \emph{Boolean} semiring.

One could also study the effective dimension if $K$ is allowed to vary across some appropriate collection of semirings. It is easy to see that every semigroup embeds in the multiplicative semigroup of \textit{some} semiring\footnote{For example, the monoid algebra over any field of the monoid obtained by adjoining an identity element to the semigroup. The identity element here is necessary to ensure that the algebra has a multiplicative identity element, and hence meets our definition of a semiring.}, and so if the semiring is allowed to be completely arbitrary then every semigroup has effective dimension $1$.  However, one might impose natural restrictions on the ground (semi)ring, and for example ask:

\begin{ques}
What are the minimum effective dimensions of the diagram monoids $\P_n$,~$\B_n$ and~$\TL_n$ (and their ($d$-)twisted counterparts), considered across all \emph{commutative semirings}?  Or across all \emph{fields}?
\end{ques}

The paper \cite{MS2012a} contains general results on effective dimensions of finite semigroups over fields.  Among many other things, the computation of the effective dimension over an algebraically closed field $K$ is reduced to the (decidable) non/equational theory of~$K$, although it is noted that `it is not feasible in practice to compute the effective dimension of your favourite semigroup' in this way.  Effective dimensions are calculated in \cite{MS2012a} for several families of monoids (such as full/partial transformation monoids, symmetric inverse monoids and matrix monoids over finite fields), but these tend to belong to the class of \emph{Rhodes semisimple} monoids, which (for reasons explained in \cite{MS2023,CEM2024}) does not contain our diagram monoids.

\section*{Acknowledgements}

The authors thank the Dame Kathleen Ollerenshaw Trust for financial support; particularly funding a research visit of the first author to the University of Manchester where this research was carried out, and for supporting conference travel of the second author.  The research of the first author was partially supported by the Australian Research Council [Future Fellowship FT190100632].  The research of the third author was supported by the Engineering and Physical Sciences Research Council [grant number EP/Y008626/1].  We also thank J.S.~Lemay for some helpful conversations.

\end{document}